\DeclarePairedDelimiter\abs{\lvert}{\rvert}%
\DeclarePairedDelimiter\ceil{\lceil}{\rceil}
\DeclarePairedDelimiter\floor{\lfloor}{\rfloor}
\newtheorem{theorem}{Theorem}[section]
\newtheorem{corollary}[theorem]{Corollary}
\newtheorem{lemma}[theorem]{Lemma}
\newtheorem{claim}[theorem]{Claim}
\theoremstyle{definition}
\newtheorem{remark}[theorem]{Remark}
\newtheorem{definition}[theorem]{Definition}
\newcommand{\lcm}{\mathrm{lcm}}
\setlist[enumerate]{nosep}
\begin{document}
\title{Random Van der Waerden Theorem}
\author{Ohad Zohar}
\date{}
\maketitle
\begin{abstract}
    
    \noindent In this paper, we prove a sparse random analogue of the Van der Waerden Theorem. 
    We show that, for all $r > 2$ and all $q_1 \geq q_2 \geq \dotsb \geq q_r \geq 3 \in \mathbb{N}$, $n^{-\frac{q_2}{q_1(q_2-1)}}$ is a threshold for the following property:
    For every $r$-coloring of the $p$-random subset of $\{1,\dotsc,n\}$, there exists a monochromatic $q_i$-term arithmetic progression colored $i$, for some $i$.
    This extends the results of Rödl and Ruciński for the symmetric case $q_1 = q_2 = \dotsb = q_r$. 
    The proof of the 1-statement is based on the Hypergraph Container Method by Balogh, Morris and Samotij and Saxton and Thomason. 
    The proof of the 0-statement is an extension of Rödl and Ruciński's argument for the symmetric case.

\end{abstract}
\section{Introduction}
For $n,q \in \mathbb{N}$ and $r \ge 2$  let $[n] \rightarrow (q)_r$ denote the property that, for every
$r$-coloring of $[n]=\{1,\dotsc,n\}$, there exists a monochromatic arithmetic progression of length~$q$. 
More generally, we denote by $[n] \rightarrow (q_1, \dotsc, q_r)$ the property that, for every
$r$-coloring of $[n]$, there exists $i$ such that there is some arithmetic progression of length $q_i$ colored $i$. 
A classical result in Ramsey theory due to Van der Waerden \cite{VDW}, states that, for every choice of
$q_1,\dotsc,q_r \in \mathbb{N}$, there exists $n_0$ such that for every $n \geq n_0$ we have $[n] \rightarrow (q_1, \dotsc q_r)$.
One might think of Van der Waerden's theorem as the arithmetic analogue to the graph-theoretic Ramsey Theorem. \par
This work aims to determine necessary and sufficient conditions for the property $[n] \rightarrow (q_1,\dotsc,q_r)$ to hold when we replace $[n]$ with a typical set of a given density.
We define $A_p$ as the random subset of a set $A$, where every element $a \in A$ belongs to $A_p$ with probability $p$, independently of all other elements of $A$.
Many interesting questions in Ramsey theory deal with determining the thresholds for values of $p$ for which it is no longer possible to color $A_p$ without introducing specific monochromatic substructures. \par
Even though the use of probabilistic methods in Ramsey theory has a long history, the study of Ramsey properties of random structures was initiated more recently by Frankl and Rödl~\cite{FR86}, who applied probabilistic methods to prove the existence of a graph $G$ with no $K_4$ for which every $2$-coloring must contain a monochromatic triangle. \par
In a series of papers, Rödl and Ruciński \cite{RR93, RR94, RR95}, determined the thresholds for the symmetric Van der Waerden property.
Let $[n]_p \rightarrow (q)_r$ denote the event that, for every $r$-coloring of $[n]_p$, there exists a monochromatic arithmetic progression of length $q$.
Similarly, let $[n]_p \rightarrow (q_1, \dotsc, q_r)$ denote the event that, for every
$r$-coloring of $[n]_p$, there exists a color $i$ such that there exists an arithmetic progression of length $q_i$ colored $i$. 
\begin{theorem} [Rödl and Ruciński] \label{thm:symmetric-random-van-der-waerden}
For $3 \leq q \in \mathbb{N}$ and every $r \ge 2$, there exist $c,C >0$ such that
\[ \lim_{n \to \infty} \mathbb{P}([n]_p \rightarrow (q)_r) = \begin{cases}
        1 & \text{if }p \geq C \cdot n^{-\frac{1}{q-1}}, \\
        0 & \text{if }p \leq c \cdot n^{-\frac{1}{q-1}}.
\end{cases}\]
\end{theorem}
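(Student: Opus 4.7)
The threshold exponent is dictated by a first-moment balance. The random set $[n]_p$ has $\Theta(np)$ elements and contains, in expectation, $\Theta(n^2 p^q)$ arithmetic progressions of length $q$, so these two counts match precisely at $p = \Theta(n^{-1/(q-1)})$. Above this density the APs proliferate and force Ramsey behavior; below it one expects a valid coloring to survive. I would attack the 1-statement and 0-statement separately, following the overall approach sketched in the abstract.

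For the 1-statement I would apply the hypergraph container method. Let $H_n$ be the $q$-uniform hypergraph on $[n]$ whose edges are the $q$-term APs. The crucial input is \emph{supersaturation}: by Szemerédi's theorem together with an averaging argument, any subset of $[n]$ of density at least $\alpha > 0$ contains $\Omega_\alpha(n^2)$ edges of $H_n$, while the co-degree conditions needed to run containers reduce to elementary AP counting. Feeding these into the Balogh-Morris-Samotij / Saxton-Thomason container theorem produces a family $\mathcal{C}$ of subsets of $[n]$ with $\log|\mathcal{C}| = o(np)$ whenever $p \ge C n^{-1/(q-1)}$, such that every AP-free set is contained in some $C \in \mathcal{C}$ of size at most $(\alpha + o(1))n$. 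A valid $r$-coloring of $[n]_p$ with no monochromatic $q$-AP splits $[n]_p$ into $r$ AP-free parts, each trapped in a container; a union bound over $r$-tuples $(C_1,\dotsc,C_r)$ shows that the probability $[n]_p \subseteq \bigcup_i C_i$ is at most $\exp(-(1-r\alpha+o(1))np)$, which combined with $|\mathcal{C}|^r = \exp(o(np))$ gives total probability $o(1)$ once $\alpha$ is chosen so that $r\alpha < 1$ (possibly after iterating the container theorem down to small $\alpha$).

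For the 0-statement I would follow the deletion-and-color strategy of Rödl and Ruciński. At $p \le c n^{-1/(q-1)}$ the expected number of APs in $[n]_p$ is $\Theta(c^{q-1} np)$, a tiny fraction of $np$ when $c$ is small. I would then argue that, with high probability, only $o(np)$ pairs of $q$-APs in $[n]_p$ share two or more elements: this rests on the arithmetic fact that two distinct $q$-APs meeting in $\ge 2$ points are determined by a bounded number of parameters, so there are only $O(n^3)$ such pairs in $[n]$ to begin with. Deleting one vertex from each overlapping pair produces a set $[n]_p \setminus B$ whose surviving APs form a nearly linear hypergraph of bounded expected degree; a uniformly random balanced $r$-coloring of $[n]_p\setminus B$ then has expected number of monochromatic APs strictly below one, and an alteration or Local Lemma argument upgrades this to the existence of a proper coloring. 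Finally the vertices of $B$ are reinserted greedily, using that $|B|$ is negligible compared with the color classes.

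The main obstacle is the 0-statement, specifically the combinatorial enumeration of clusters of mutually intersecting APs inside $[n]_p$. Unlike the graph Ramsey setting, where dangerous configurations are controlled through counting copies of small subgraphs in $G(n,p)$ via the $2$-density, arithmetic progressions carry a rigid linear structure that makes the overlap combinatorics subtle: one has to track how pairs, triples, and longer chains of APs can share elements, and concentrate these counts around their means tightly enough that the subsequent coloring step survives. By comparison, the 1-statement is conceptually clean once supersaturation and the container theorem are available.
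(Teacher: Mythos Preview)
Your 1-statement outline is essentially the paper's (the Nenadov--Steger container argument combined with Varnavides supersaturation), though one detail needs correction: it is not true that $\log|\mathcal{C}| = o(np)$. Indeed, the container theorem only bounds the number of containers by $\sum_{s \le Dn^{(q-2)/(q-1)}}\binom{n}{s}$, whose logarithm is of order $n^{(q-2)/(q-1)}\log n$, i.e.\ larger than $np = Cn^{(q-2)/(q-1)}$ by a $\log n$ factor. The paper compensates exactly as the container method intends: one enumerates over the \emph{signature sets} $S_i$ rather than the containers themselves, exploiting that each $S_i \subset [n]_p$ contributes an extra factor $p^{|S_i|}$ to the union bound. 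With this refinement your argument goes through.

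The 0-statement, however, has a genuine gap. Your key claim---that a uniformly random $r$-coloring of $[n]_p \setminus B$ has expected number of monochromatic $q$-APs strictly below one---is false. At $p = cn^{-1/(q-1)}$ the expected number of $q$-APs in $[n]_p$ is $\Theta(c^{q-1}np)$ and $np = cn^{(q-2)/(q-1)} \to \infty$, so under a random $r$-coloring the expected number of monochromatic APs is $\Theta(r^{1-q}c^{q-1}np)$, which also tends to infinity regardless of how small $c$ is. Alteration therefore cannot reduce it below one, and the Local Lemma does not rescue the argument either: the dependency degree of a bad event (an AP being monochromatic) is governed by the maximum vertex degree in the AP-hypergraph of $[n]_p$, and while the \emph{average} degree is $\Theta(c^{q-1})$, the maximum degree is not bounded uniformly in $n$. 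For small $q$ (e.g.\ $q=3$) the LLL condition would require the hypergraph to be essentially a matching, which it is not.

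The paper's route is entirely different and structural. One first proves a deterministic lemma: any $q$-uniform hypergraph that is not $2$-colorable must contain one of a short explicit list of ``$2$-blocking'' configurations (special cycles, simple cycles with a handle, spoiled paths, and for $q=3$ the reduced Fano plane). One then proves a probabilistic lemma via first-moment computations: at $p = cn^{-1/(q-1)}$ with $c$ small, none of these configurations appears in the AP-hypergraph of $[n]_p$ a.a.s. The difficulty you correctly anticipate---controlling how chains of APs overlap---is precisely what this structural classification handles, but it does so by ruling out specific subhypergraphs rather than by bounding degrees globally.
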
 

Similarly to the notation for the Van der Waerden property, one lets $G \rightarrow (F_1, \dotsc, F_r)$ denote the property that, for every $r$-coloring of $E(G)$, there exists $i$ such that there is a copy of $F_i$ colored $i$.
As well as Theorem \ref{thm:symmetric-random-van-der-waerden}, Rödl and Ruciński have also determined the threshold for the event $G(n,p) \rightarrow (F, \dotsc, F)$.
In 1997, Kohayakawa and Kreuter~\cite{KK97} initiated the study of the asymmetric case for graphs; they determined the threshold for the event $G(n,p) \rightarrow (C_1, \dotsc, C_r)$ where $C_1, \dotsc, C_r$ are all cycles, and conjectured the location of the threshold for general subgraphs $(F_1, \dotsc, F_r)$.
Several papers have since extended Kohayakawa and Kreuter's result to other families of subgraphs. 
For instance, Marciniszyn, Skokan, Sp\"{o}hel and Steger \cite{MarMarSkoSpo09}, showed that the conjecture holds when $F_1, \dotsc, F_r$ are all cliques.
More recently Mousset, Nenadov and Samotij~\cite{MouNenSam} proved an upper bound for the threshold function in the Kohayakawa--Kreuter conjecture for general subgraphs, extending a result of Gugelmann, Nenadov, Person, \v{S}kori\'{c}, Steger and Thomas \cite{GugNenRaj}, and settling the 1-statement.
Very recently, Liebnau, Mattos, Mendonça and Skokan~\cite{liebenau2020asymmetric}, have shown that the 0-statement holds for $r=2$ for any pair of cycles and cliques.
However, the 0-statement for general subgraphs remains open. \par
In this paper we prove the following natural analogue of the Kohayakawa--Kreuter conjecture for the Van der Waerden theorem:
\begin{theorem} \label{thm:random-van-der-waerden}
For every $r \ge 2$ and $q_1 \geq q_2 \geq \dotsb \geq q_r \in \mathbb{N}$, there exist $c,C >0$ such that
\[ \lim_{n \to \infty} \mathbb{P}([n]_p \rightarrow (q_1,\dotsc, q_r)) = \begin{cases}
1 & \text{if }p \geq C \cdot n^{-\frac{q_2}{q_1(q_2-1)}}, \\
0 & \text{if }p \leq c \cdot n^{-\frac{q_2}{q_1(q_2-1)}}.
\end{cases}\]
\end{theorem}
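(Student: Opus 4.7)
\emph{1-statement.} For $p \ge C n^{-q_2/(q_1(q_2-1))}$, I would apply the hypergraph container theorem to the $q_1$-uniform hypergraph $\mathcal{H}$ on $[n]$ whose edges are the $q_1$-APs. This yields a family $\mathcal{C}$ of containers for the $q_1$-AP-free subsets of $[n]$, each of size $|C|\le(1-\delta)n$, with $\log|\mathcal{C}|$ suitably small. In any bad $r$-coloring $A_1\sqcup\dotsb\sqcup A_r$ of $[n]_p$ the class $A_1$ is $q_1$-AP-free and therefore lies inside some container $C\in\mathcal{C}$, and then $[n]_p\setminus C$ carries an $(r-1)$-coloring whose classes are all $q_2$-AP-free (since $q_i\le q_2$ for $i\ge 2$). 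Consequently it suffices to show that, for every $C\in\mathcal{C}$, the event $([n]\setminus C)_p\not\to(q_2)_{r-1}$ has probability small enough to survive the union bound over $\mathcal{C}$. Since Theorem~\ref{thm:symmetric-random-van-der-waerden} only delivers $o(1)$, I would iterate containers, now applied to the hypergraph of $q_2$-APs inside $[n]\setminus C$, in the spirit of the double-container arguments used for the Kohayakawa--Kreuter conjecture in~\cite{MouNenSam}. The exponent $q_2/(q_1(q_2-1))$ is precisely where the budgets of the two container applications balance.

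\emph{0-statement.} For $p \le c n^{-q_2/(q_1(q_2-1))}$, I would extend the Rödl--Ruciński alteration argument to construct, w.h.p., an explicit good $r$-coloring of $[n]_p$. The template is a two-round exposure: in the first round take color~$1$ to be a near-maximum $q_1$-AP-free subset of the first random piece (which is permissive because $q_1$ is the largest of the $q_i$) and distribute the rest uniformly at random among colors $2,\dotsc,r$; then show via first- and second-moment estimates that each color $i\ge 2$ contains $o(1)$ expected $q_i$-APs. In the second round, deal with the few additional conflicts introduced by the remaining random vertices by local recolorings, which are possible because the surviving random vertices lie in $q_i$-APs only very sparsely. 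The scaling $p = c n^{-q_2/(q_1(q_2-1))}$ is exactly the threshold below which the expected count of every potentially problematic ``hybrid'' configuration (an overlapping union of $q_1$-APs for color~$1$ together with $q_2$-APs for the other colors) remains subcritical.

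\emph{Main obstacle.} I expect the hardest step to be the 0-statement cluster analysis: classifying all entangled unions of $q_1$- and $q_2$-APs, bounding their expected counts at the threshold, and using these bounds to drive the deletion-and-recoloring procedure. This is the asymmetric analogue of the intricate clustering analysis in Rödl--Ruciński, now with two different AP lengths interacting. In the 1-statement the corresponding delicate point is proving a balanced-supersaturation estimate for $q_2$-APs inside a $q_1$-AP-free container, which is the arithmetic analogue of the Morris--Saxton-type counting lemmas used for cliques in the Kohayakawa--Kreuter-conjecture proofs.
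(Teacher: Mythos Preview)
Your 1-statement plan applies containers in the wrong direction, and this breaks the union bound rather than merely changing the bookkeeping. Containers for $q_1$-AP-free sets come with fingerprints of size up to $\Theta(n^{1-1/(q_1-1)})$, whereas at the target density $pn = \Theta(n^{1-q_2/(q_1(q_2-1))})$; one line of arithmetic gives $\frac{q_2}{q_1(q_2-1)} - \frac{1}{q_1-1} = \frac{q_1-q_2}{q_1(q_1-1)(q_2-1)} > 0$, so the fingerprint cap \emph{exceeds} $pn$. The weighted sum $\sum_{S_1}p^{|S_1|}$ over all admissible fingerprints is then essentially $(1+p)^n\approx e^{pn}$, and no $e^{-\Omega(pn)}$ bound coming from your inner $(q_2)_{r-1}$ argument can absorb it. The paper does the opposite: it applies containers only to $q_2$-APs, for colours $2,\dots,r$ (fingerprints of size $O(n^{1-1/(q_2-1)})$), and handles colour~1 directly via Janson's inequality, obtaining $\exp(-\Omega(n^2p^{q_1})) = \exp(-\Omega(C^{q_1}n^{1-1/(q_2-1)}))$ for the event that the remainder is $q_1$-AP-free. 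Even so, the naive union bound over $q_2$-fingerprints is of the same order $\exp(\tilde O(n^{1-1/(q_2-1)}))$, so a second idea is required: one first recolours every element of $[n]_p$ not lying in any $q_1$-AP to colour~1 (harmless, as it cannot complete a red $q_1$-AP), which forces every $q_2$-fingerprint to be covered by $q_1$-APs of $[n]_p$. At this $p$ those $q_1$-APs are a.a.s.\ mostly pairwise disjoint (Lemma~\ref{claim:APs-are-mostly-indpendent}), and the union bound collapses to an expectation over collections of at most $O(n^{1-1/(q_2-1)})$ disjoint $q_1$-APs, yielding $\exp(O(\log C\cdot n^{1-1/(q_2-1)}))$, which the Janson factor beats for large~$C$. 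No balanced supersaturation is used.

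Your 0-statement route is also different from the paper's. The paper uses neither two-round exposure nor a random colour assignment; it follows the structural R\"odl--Ruci\'nski template. A deterministic lemma shows that any $(q_1,q_2)$-uniform hypergraph with no red/blue colouring (no red $q_1$-edge, no blue $q_2$-edge) must contain one of four explicit obstructions --- a short edge with a non-simple cover, a spoiled simple path, a path with a ``saw'', or a path with a spoiled extension --- and a first-moment calculation then shows that none of these appear in the AP hypergraph on $[n]_p$ at $p = c\,n^{-q_2/(q_1(q_2-1))}$. Your instinct that the crux is the cluster analysis is right, but in the paper it materialises as arithmetic bounds on how the $q_1$-APs covering a single $q_2$-AP can overlap (Lemma~\ref{lemma:minimal-covers} is the heart of it), not as a deletion-and-recolour procedure. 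In particular your random-distribution step is not obviously sound: the elements forced out of colour~1 are a hitting set for the $q_1$-APs of $[n]_p$, of size $\Theta(n^{1-1/(q_2-1)})$ and highly correlated with the AP structure, so there is no clean independence on which to run moment estimates for $q_2$-APs among them.
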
 
Whether the threshold in Theorem \ref{thm:random-van-der-waerden} is sharp remains an interesting open question; so far it has only been shown that this is the case in $\mathbb{Z}_n$ when $r=2$ and $q_1=q_2$ by Friedgut, Han, Person and Schacht \cite{FriHanPerSch16}. \par
It is important to note that only the two largest lengths determine the threshold.
Therefore, in the proof of the 1-statement, it suffices to assume $q_2 = \dotsb = q_r$.
For the 0-statement, we will show that a proper coloring exists using only the first two colors, as is necessary for the case $r=2$, since that is also sufficient for all values of $r$.
By that reasoning, it is natural to divide the proof into the symmetric ($q_1 = q_2$) and asymmetric ($q_1 > q_2$) cases. \par
A generalization of Van der Waerden's theorem is the classical Rado theorem \cite{Rado}, which characterizes the so-called partition-regular matrices. We say a matrix $A$ is partition-regular if every finite coloring of the positive integers admits a monochromatic solution to the equation $Ax=0$.
Independently to this work, some progress has been made towards an asymmetric version of the random Rado theorem: given a sequence of partition-regular matrices $A_1, \dotsc, A_r$, for which values of $p$ does every $r$-coloring of $[n]_p$ admit a solution to $A_ix=0$ colored $i$? Aigner-Horev and Person \cite{AigPer19} obtained an upper bound for the threshold value, which implies the 1-statement in Theorem \ref{thm:random-van-der-waerden}. Soon after, Hancock and Treglown \cite{HanTre} obtained a matching lower bound for the case where every $A_i$ has rank one; this result implies the 0-statement in a special case of theorems \ref{thm:symmetric-random-van-der-waerden} and \ref{thm:random-van-der-waerden}, where $q_i=3$ for all $i$ . \par
The structure of this paper is as follows. 
In Section \ref{sec:preliminary}, we present several known results that are used in our proofs.
In Sections \ref{sec:symm-1} and \ref{sec:symm-0}, we present short proofs for the symmetric case, already proved by Rödl and Ruci\'{n}ski \cite{RR94}. We also correct an error in their proof for the 0-statement, which was independently discovered by Hancock and Treglown.
These sections are included here strictly for completeness; readers familiar with these results are encouraged to continue reading from Section \ref{sec:asymm-1} in which we prove the 1-statement for the asymmetric case.
Finally, in Section \ref{sec:asymm-0}, we complete the proof of Theorem \ref{thm:random-van-der-waerden} by proving the 0-statement for the asymmetric case.

\section{Preliminary results} \label{sec:preliminary}
In this section, we state several known results that are used throughout this paper.
The first result is the Hypergraph Container Lemma proved by Balogh, Morris and Samotij \cite{BalMorSam14}, and independently by Saxton and Thomason \cite{SaxTho}.
For an introduction to the various applications of this lemma, as well as the formulation used in this paper, we refer the reader to \cite{BalMorSam18}.
\begin{definition}
For a $k$-uniform hypergraph $H$ and a set $A \subset V(H)$ we define 
\[
    d(A) = \abs{\{e \in E(H) : A \subset e\}},
\] 
and for $\ell \in \{1, \dotsc, k\}$ we define 
\[
    \Delta_\ell(H) = \max\{d(A): A \subset V(H) \text{ and } \abs{A} = \ell \}.
\]
\end{definition}

\begin{definition}
    Let $H$ be a hypergraph, we denote the set of independent subsets of $V(H)$ by
    \[\mathcal{I}(H) = \{I \subset V(H) : \forall E \in E(H), E \not\subset I\}.\]
\end{definition}

\begin{theorem}[The Hypergraph Container Lemma] \label{thm:hypergraph-container} 
Let $k \in \mathbb{N}$ and $\epsilon > 0$. Let H be a nonempty $k$-uniform hypergraph, and suppose that:
\begin{center}
    $\Delta_\ell(H) \leq K \cdot \left(\frac{b}{v(H)}\right)^{\ell-1} \cdot \frac{e(H)}{v(H)}$
\end{center}
for some $b,K \in \mathbb{N}$ and every $\ell \in \{1,\dotsc , k\}$. 
Then, there exists a constant $D = D(\epsilon, k, K)$, a collection $\mathcal{C} \subset P(V(H))$ and a function $f\colon P(V(H)) \to \mathcal{C}$ such that: 
\begin{enumerate}
    \item for every $I \in \mathcal{I}(H)$, there exists $S \subset I$ with $\abs{S}{} \leq Db$ and $I \subset f(S)$, 
    \item each $C \in \mathcal{C}$ contains fewer than $\epsilon \cdot e(H)$ edges.
    \end{enumerate}
\end{theorem}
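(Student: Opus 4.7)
The plan is to construct the map $f$ and the container family $\mathcal{C}$ by means of a deterministic peeling algorithm: given an independent set $I \in \mathcal{I}(H)$, we extract a small fingerprint $S \subset I$ one vertex at a time, while simultaneously shrinking a current ``tentative container'' $C_t \subset V(H)$ that always contains $I$. When $C_t$ has become sparse (fewer than $\epsilon \cdot e(H)$ edges), the algorithm halts and we set $f(S) := C_t$. Because the whole procedure depends only on $S$ (and the fixed ordering used to break ties), the container is determined by the fingerprint, which is all we need.

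First I would fix an arbitrary linear order on $V(H)$ and initialize $C_0 = V(H)$, $S_0 = \emptyset$. At step $t$, if $e(H[C_t]) < \epsilon \cdot e(H)$ we stop; otherwise we compute, for every vertex $v \in C_t$, a \emph{score} that aggregates $d(A)$ for sets $A \ni v$ of all sizes $\ell = 1, \dotsc, k$, weighted so as to exploit the full strength of the degree hypothesis. We select the vertex $v_t \in C_t$ of maximum score, query the oracle ``is $v_t \in I$?''; if yes we add $v_t$ to $S_t$, and in either case we delete from $C_t$ all vertices whose joint co-degree with $\{v_t\} \cup S_t$ is abnormally large, as well as $v_t$ itself. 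The high-score deletions progressively destroy edges of $H[C_t]$; the bookkeeping deletions prevent higher-order co-degree ``spikes'' from later ruining the analysis.

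The main obstacle is the combinatorial accounting that forces $|S| \le Db$ and $e(f(S)) < \epsilon \cdot e(H)$ to hold simultaneously: the condition $\Delta_\ell(H) \le K \cdot (b/v(H))^{\ell-1} \cdot e(H)/v(H)$ is calibrated precisely so that each selected vertex $v_t$ removes, in expectation, a $\Theta(1/b)$ fraction of the remaining edges, while the $\ell \ge 2$ bounds control the ``collateral damage'' and ensure that the set of admissible high-score vertices at every step is large enough that we never get stuck. Summing over the $O(b)$ steps and invoking a convexity (or Jensen-type) argument on the score function shows that the potential $e(H[C_t])$ drops below $\epsilon \cdot e(H)$ within $Db$ rounds for a suitable $D = D(\epsilon, k, K)$.

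Finally, correctness follows because the algorithm's output, viewed as a function of the fingerprint $S$ together with the fixed tie-breaking order, is well defined; declaring $\mathcal{C}$ to be the image of this function gives condition (2) by the halting rule, while condition (1) holds because $S \subset I$ and $I \subset C_t$ is maintained as an invariant throughout. The hardest part of executing this plan rigorously is choosing the score function and the collateral-deletion rule so that both bounds close simultaneously; this is the technical heart of the Balogh--Morris--Samotij and Saxton--Thomason proofs, and sidestepping it is not possible within this framework.
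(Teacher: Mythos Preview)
The paper does not contain a proof of this statement at all: Theorem~\ref{thm:hypergraph-container} is quoted in Section~\ref{sec:preliminary} as a known preliminary result, with citations to Balogh--Morris--Samotij and Saxton--Thomason, and the reader is referred to \cite{BalMorSam18} for details. There is therefore nothing in the paper to compare your proposal against.

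That said, your outline is in the right spirit and broadly matches the algorithmic scheme of the cited proofs: a deterministic fingerprint-extraction procedure that, given $I$, repeatedly selects a high-degree vertex, records whether it lies in $I$, and prunes the current container accordingly. But as written it is a narrative, not a proof. The two places where real work is required are exactly the ones you flag at the end and then decline to carry out: (i) the precise definition of the score and of the co-degree deletion rule, and (ii) the potential-function argument showing that each fingerprint vertex destroys a $\Theta(1/b)$ fraction of the surviving edges so that the process terminates within $Db$ rounds. Without these, the claim that ``summing over the $O(b)$ steps and invoking a convexity argument'' yields the bound is an assertion rather than an argument. If you intend this as a proof rather than a pointer to the literature, those steps must be written out; otherwise, citing \cite{BalMorSam14,SaxTho,BalMorSam18} as the paper does is the honest course.
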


\begin{claim}\label{claim:degree-is-n}
    Let $H$ be the hypergraph encoding $q$-APs in $[n]$, where $V(H)=[n]$ and $E(H)$ is the set of arithmetic progressions of length $q$.
    Then, \[\Delta(H) = \Delta_1(H) \le n\].
\end{claim}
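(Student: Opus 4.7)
I would prove the bound by parameterising each $q$-AP containing $x \in [n]$ by a pair $(i, d)$, where $i \in \{0, 1, \ldots, q-1\}$ is the position of $x$ within the AP and $d \ge 1$ is the common difference. The AP is then $\{x - id, x - (i-1)d, \ldots, x + (q-1-i)d\}$, and it lies in $[n]$ precisely when $id \le x - 1$ and $(q-1-i)d \le n - x$. Writing $N_i$ for the number of admissible differences $d$ at a given position $i$, we have $\deg(x) = \sum_{i=0}^{q-1} N_i$, so the task reduces to bounding this sum.

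For the two extreme positions, $N_0 \le \lfloor (n-x)/(q-1)\rfloor$ and $N_{q-1} \le \lfloor (x-1)/(q-1)\rfloor$, and these add up to at most $(n-1)/(q-1)$ since $(n-x) + (x-1) = n-1$. For each interior position $1 \le i \le q-2$, the two constraints give $N_i \le \min\bigl((x-1)/i,\,(n-x)/(q-1-i)\bigr)$. I would then apply the elementary weighted-mean inequality $\min(A/\alpha,\, B/\beta) \le (A+B)/(\alpha+\beta)$ with $A = x-1$, $B = n-x$, $\alpha = i$, $\beta = q-1-i$ (so $\alpha+\beta = q-1$), giving $N_i \le (n-1)/(q-1)$ uniformly over $i$.

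Summing these $q-2$ interior bounds together with the combined endpoint contribution yields $\deg(x) \le (n-1)/(q-1) + (q-2)(n-1)/(q-1) = n - 1 \le n$, which is the required bound. There is no substantive obstacle; the single subtle point is that one must handle $N_0$ and $N_{q-1}$ jointly rather than separately, since the weighted-mean inequality does not apply at the extreme positions and a per-term bound at each endpoint would lose a factor of two, yielding only $\deg(x) \le q(n-1)/(q-1)$ instead of the tight $n-1$.
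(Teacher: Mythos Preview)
Your proof is correct and follows essentially the same approach as the paper: both parametrise by the position of $x$ within the progression, bound the two endpoint positions together by $(n-1)/(q-1)$, and bound each interior position via the inequality $\min(A/\alpha,\,B/\beta)\le (A+B)/(\alpha+\beta)$. Your indexing is $0$-based while the paper's is $1$-based, and you obtain the slightly sharper $\deg(x)\le n-1$, but the argument is otherwise identical.
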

\begin{proof}
    We denote the number of arithmetic progressions of length $q$ in $[n]$ such that $k$ is the $i$th element by $d(k,i)$.
    One easily checks that $d(k,i)$ satisfies:
    \[
        d(k,i) = \begin{cases}
            \left\lfloor \frac{n-k}{q-1} \right\rfloor & \text{if } i = 1, \\
            \left\lfloor \frac{k-1}{q-1} \right\rfloor & \text{if } i = q, \\
            \min\{ \left\lfloor \frac{k-1}{i-1} \right\rfloor,  \left\lfloor \frac{n-k}{q-i} \right\rfloor \} & \text{otherwise}.
        \end{cases}
    \]
    We obtain that that the degree of $k$ in $H$ satisfies \[d(k) = \sum_{i=1}^q d(k,i) \le \frac{n}{q-1} + \sum_{i=2}^{q-1} d(k,i).\]
    Applying the bound  $\min\{ \left\lfloor \frac{k-1}{i-1} \right\rfloor,  \left\lfloor \frac{n-k}{q-i} \right\rfloor \} \le \frac{k-1 + n-k}{i-1 + q - i}< \frac{n}{q-1}$ we conclude that
    \[
        d(k) = \sum_{i=1}^q d(k,i) \le \frac{n}{q-1} + \sum_{i=2}^{q-1} d(k,i) \le \frac{n}{q-1} + (q-2) \cdot \frac{n}{q-1} \le n.
    \]
\end{proof}

\begin{remark} \label{rmk:hypergraph-container-q-APs}
    Let $H$ be the hypergraph encoding $q$-APs in $[n]$, where $V(H)=[n]$ and $E(H)$ is the set of arithmetic progressions of length $q$. Then: 
    \begin{enumerate}
        \item The number of edges of $H$ satisfies \[e(H)= \sum_{i=1}^{n-q+1} \left\lfloor \frac{n-i}{q-1} \right\rfloor =\Theta(n^2)\]
    since there are $\lfloor \frac{n-a}{q-1} \rfloor$ arithmetic progressions of length $q$ in $[n]$ with smallest element $a$. 
\item For $\ell = 1$, we have $\Delta_1(H) \leq  n$, by the previous claim.
\item For every $\ell \geq 2$, we have $\Delta_\ell(H) \leq q^2$ since choosing the indices of two elements determines the arithmetic progression. 
    \end{enumerate}
Therefore we may apply the Hypergraph Container Lemma with
    $b=qn^{\frac{q-2}{q-1}}$, since for $\ell=1$
\[
    \Delta_1(H) \leq n \leq K \cdot \frac{e(H)}{v(H)} 
\]
    and for $2 \leq \ell \le q$
\[
    \Delta_\ell(H) \leq q^2 \leq  q^{q-1} \leq K \cdot \frac{q^{q-1}n^{q-2}}{n^{q-1}} \cdot \frac{e(H)}{n} \leq K \cdot \left(\frac{qn^{\frac{q-2}{q-1}}}{v(H)}\right)^{\ell-1} \cdot \frac{e(H)}{v(H)},
\]
provided that $K$ and $n$ are sufficiently large.
\end{remark}
\begin{definition}
For brevity, we say a set of integers is $q$-AP-free if it contains no arithmetic progression of length $q$.
\end{definition}
We obtain the following container lemma for arithmetic progressions.

\begin{theorem} \label{thm:q-AP-container}
    For every integer $q \geq 3$ and $\epsilon > 0$, there exists a constant $D=D(\epsilon, q)$ such that for each $n \in \mathbb{N}$, there exists a collection $\mathcal{G} \subset P([n])$ and a function $f\colon P([n]) \to \mathcal{G}$
such that: 
\begin{enumerate}
    \item each $G \in \mathcal{G}$ contains fewer then $\epsilon n^2$ many $q$-APs, 
    \item for every $q$-AP-free subset $I \subset [n]$, there exists $S \subset I$ with $\abs{S}{} \leq D \cdot n^{\frac{q-2}{q-1}}$ and $I \subset f(S)$.
\end{enumerate}
\end{theorem}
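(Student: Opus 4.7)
The plan is to derive Theorem \ref{thm:q-AP-container} as an essentially immediate consequence of the Hypergraph Container Lemma (Theorem \ref{thm:hypergraph-container}), applied to the hypergraph $H$ whose vertex set is $[n]$ and whose edges are the $q$-APs in $[n]$. All the nontrivial work has already been done in Remark \ref{rmk:hypergraph-container-q-APs}, which verifies the degree hypotheses with $b = q n^{(q-2)/(q-1)}$.

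First, I would fix $q \ge 3$ and $\epsilon > 0$. By Remark \ref{rmk:hypergraph-container-q-APs}, we have $e(H) \le c_q n^2$ for some constant $c_q$ depending only on $q$. Set $\epsilon' = \epsilon / c_q$. Now apply Theorem \ref{thm:hypergraph-container} to $H$ with this $\epsilon'$, with $k = q$ (since $H$ is $q$-uniform), and with $b = q n^{(q-2)/(q-1)}$; the degree conditions $\Delta_\ell(H) \le K (b/v(H))^{\ell-1} e(H)/v(H)$ are verified in the remark for sufficiently large $K$ and $n$ (the finitely many small $n$ can be absorbed into $D$ by taking $f(S) = [n]$ for $|S| \le D n^{(q-2)/(q-1)}$). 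This yields a constant $D_0 = D_0(\epsilon', q, K)$, a collection $\mathcal{C} \subset P([n])$, and a function $f_0\colon P([n]) \to \mathcal{C}$ such that each $C \in \mathcal{C}$ contains fewer than $\epsilon' \cdot e(H) \le \epsilon' c_q n^2 = \epsilon n^2$ edges of $H$, and every independent set $I$ of $H$ is contained in some $f_0(S)$ with $S \subseteq I$ and $|S| \le D_0 b = D_0 q \cdot n^{(q-2)/(q-1)}$.

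Next, I would translate back to the language of the theorem. Independent sets of $H$ are exactly the $q$-AP-free subsets of $[n]$; edges of $H$ lying in a container $C$ are exactly $q$-APs contained in $C$. Thus setting $\mathcal{G} = \mathcal{C}$, $f = f_0$, and $D = D_0 \cdot q$ yields the desired conclusion.

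There is essentially no obstacle here, since the entire argument is bookkeeping: verifying the co-degree bounds (done), checking that $e(H) = \Theta(n^2)$ (done), and absorbing the constant $q$ from $b$ into the final $D$. The only point that requires any care is choosing $\epsilon'$ small enough relative to the implicit constant in $e(H) = \Theta(n^2)$ so that the container upper bound $\epsilon' e(H)$ is genuinely below $\epsilon n^2$, which is immediate.
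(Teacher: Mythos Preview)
Your proposal is correct and follows exactly the approach of the paper: the theorem is stated there simply as a consequence of applying the Hypergraph Container Lemma to the $q$-AP hypergraph with the parameters verified in Remark~\ref{rmk:hypergraph-container-q-APs}. If anything, you are more careful than the paper, which does not write out a proof at all; your handling of the constant in $e(H)=\Theta(n^2)$ via $\epsilon'$ and the absorption of the factor $q$ from $b$ into $D$ are the right details to note.
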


We also require two classical results in probabilistic combinatorics, the first of which is Janson's inequality \cite{Janson}.
\begin{theorem}[Janson's inequality]
    Let $\Gamma$ be a finite set and let $\mathcal{S} \subset P(\Gamma)$. 
    For every $A \in \mathcal{S}$, let $I_A = 1$ if $A \subset \Gamma_p$ and $I_A = 0$ otherwise. \\
    Let $X = \sum_{A \in \mathcal{S}} I_A$ be the random variable counting the sets of $\mathcal{S}$ which are entirely contained in $\Gamma_p$.
    Set \[\mu = \mathbb{E}X \quad\text{ and }\quad \Delta = \sum_{\substack{(A,B) \in \mathcal{S}^2 \\ A\neq B, A \cap B \neq \emptyset}} \mathbb{E}[I_A \cdot I_B]. \]
Then, \[\mathbb{P}(X = 0) \leq e^{-\mu + \frac{\Delta}{2}}.\]
Moreover, if $\Delta \geq \mu$, then
\[\mathbb{P}(X = 0) \leq e^{-\frac{\mu^2}{2\Delta}.}\]
\end{theorem}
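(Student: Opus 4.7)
The plan is to enumerate the sets of $\mathcal{S}$ as $A_1, \dotsc, A_m$, write $B_i$ for the increasing event $\{A_i \subset \Gamma_p\}$, and express
$\mathbb{P}(X = 0) = \prod_{i=1}^m q_i$, where $q_i = \mathbb{P}(\overline{B_i} \mid \bigcap_{j < i} \overline{B_j})$. The heart of the argument is to prove the per-index bound $q_i \leq \exp\bigl(-p_i + \sum_{j < i,\, A_j \cap A_i \neq \emptyset} \mathbb{P}(B_i \cap B_j)\bigr)$, where $p_i = \mathbb{P}(B_i)$. Once this is in hand, multiplying over $i$ and noting that every unordered intersecting pair contributes exactly once to the double sum gives $\mathbb{P}(X = 0) \leq e^{-\mu + \Delta/2}$, since $\Delta$ counts such pairs twice.

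To establish the per-index bound, I would split $\{1, \dotsc, i-1\}$ into the ``independent'' indices $I_i = \{j < i : A_j \cap A_i = \emptyset\}$ and the ``dependent'' indices $D_i = \{j < i : A_j \cap A_i \neq \emptyset\}$, and first condition on $E_I = \bigcap_{j \in I_i} \overline{B_j}$. Since $B_i$ depends only on the coordinates in $A_i$, which are disjoint from every $A_j$ with $j \in I_i$, the event $B_i$ is independent of $E_I$. Writing $r_i = 1 - q_i = \mathbb{P}(B_i \cap \bigcap_{j \in D_i} \overline{B_j} \mid E_I) \big/ \mathbb{P}(\bigcap_{j \in D_i} \overline{B_j} \mid E_I)$, discarding the denominator (which is at most $1$), and applying inclusion-exclusion to the numerator yields $r_i \geq p_i - \sum_{j \in D_i} \mathbb{P}(B_i \cap B_j \mid E_I)$. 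Here I would invoke Harris-FKG: $B_i \cap B_j$ is an increasing event while $E_I$ is decreasing, so conditioning on $E_I$ can only decrease the probability, giving $\mathbb{P}(B_i \cap B_j \mid E_I) \leq \mathbb{P}(B_i \cap B_j)$. The per-index bound then follows from $1 - x \leq e^{-x}$.

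For the ``moreover'' statement, when $\Delta \geq \mu$, I would use the standard deletion trick: for a parameter $\rho \in [0,1]$, let $\mathcal{S}'$ be the random subfamily obtained by retaining each $A \in \mathcal{S}$ independently with probability $\rho$. Since $X = 0$ forces the restricted sum $X_{\mathcal{S}'} = \sum_{A \in \mathcal{S}'} I_A$ to vanish as well, conditioning on $\mathcal{S}'$ and applying the first part gives $\mathbb{P}(X = 0) \leq \mathbb{E}\bigl[\exp(-\mu_{\mathcal{S}'} + \Delta_{\mathcal{S}'}/2)\bigr]$. A first-moment argument then produces a realization of $\mathcal{S}'$ for which $\mu_{\mathcal{S}'} - \Delta_{\mathcal{S}'}/2 \geq \rho \mu - \rho^2 \Delta/2$, and optimizing with $\rho = \mu/\Delta \leq 1$ yields $e^{-\mu^2/(2\Delta)}$.

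The main obstacle is getting the direction of the Harris-FKG inequality right, which requires tracking carefully which events are increasing and which are decreasing: the natural-looking step of passing from the conditional to the unconditional probability only goes the favourable way because of the opposite monotonicity of $B_i \cap B_j$ and $E_I$. Once that sign is secured, the remaining work is routine bookkeeping of the pair contributions to $\Delta$ and the one-line optimization in the deletion step.
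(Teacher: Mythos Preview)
The paper does not supply its own proof of Janson's inequality: it is quoted in Section~\ref{sec:preliminary} as a preliminary tool, with the sentence ``For an introduction, as well as proofs for both inequalities, we refer the reader to \cite{AlonSpencerBook}'' immediately afterwards. Consequently there is nothing in the paper to compare your argument against.

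That said, your outline is the standard Boppana--Spencer proof and is correct as written: the decomposition $q_i = 1 - r_i$, the splitting into independent and dependent indices, the use of Harris's inequality to pass from $\mathbb{P}(B_i\cap B_j\mid E_I)$ to $\mathbb{P}(B_i\cap B_j)$, and the deletion trick with $\rho=\mu/\Delta$ for the extended inequality are all on the mark. The only place to be slightly more explicit is in the ``moreover'' step: rather than bounding the expectation of the exponential, you actually argue (as you then do) that \emph{some deterministic} subfamily $\mathcal{S}'$ satisfies $\mu_{\mathcal{S}'}-\Delta_{\mathcal{S}'}/2\ge \rho\mu-\rho^2\Delta/2$ by linearity of expectation, and then apply the first inequality to that fixed $\mathcal{S}'$; the phrasing ``$\mathbb{P}(X=0)\le \mathbb{E}[\exp(-\mu_{\mathcal{S}'}+\Delta_{\mathcal{S}'}/2)]$'' is not quite the inequality you use, since the randomness in $\mathcal{S}'$ is eliminated before the exponential is applied.
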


The second inequality is also known as The Extended Janson Inequality. For an introduction, as well as proofs for both inequalities, we refer the reader to \cite{AlonSpencerBook}. \par
We will also require a special case of Harris's inequality \cite{Harris}.

\begin{theorem}[Harris's inequality]
    Let $\Gamma$ be a finite set and let $f\colon P(\Gamma) \to \{0,1\}$ be an indicator function for some family of sets $\mathcal{A}$. We say $\mathcal{A}$ is increasing (equivalently decreasing), if $S_1 \subset S_2 \implies f(S_1) \leq f(S_2)$ (equivalently $f(S_1) \geq f(S_2)$ ).
    If $\mathcal{A}$ is increasing and $\mathcal{B}$ is decreasing, then
    \[ \mathbb{P}(\Gamma_p \in \mathcal{A} \cap \mathcal{B}) \leq \mathbb{P}(\Gamma_p \in \mathcal{A}) \cdot \mathbb{P}(\Gamma_p \in \mathcal{B}).\]
\end{theorem}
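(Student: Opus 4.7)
The plan is to prove a stronger version of the inequality by induction on $|\Gamma|$ and then recover the stated special case by setting $f = \mathbf{1}_{\mathcal{A}}$ and $g = \mathbf{1}_{\mathcal{B}}$. The strengthening I would use is: for every pair of bounded functions $f, g \colon P(\Gamma) \to \mathbb{R}$ such that $f$ is non-decreasing and $g$ is non-increasing with respect to set inclusion, $\mathbb{E}[f(\Gamma_p) \cdot g(\Gamma_p)] \leq \mathbb{E}[f(\Gamma_p)] \cdot \mathbb{E}[g(\Gamma_p)]$. Moving to real-valued monotone functions is not merely convenient but essential, because conditioning on a single coordinate turns indicator functions into $[0,1]$-valued ones.

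For the base case $\Gamma = \{x\}$, I would expand both sides directly; a short algebraic manipulation gives
\[
\mathbb{E}[f] \cdot \mathbb{E}[g] - \mathbb{E}[f \cdot g] = p(1-p) \bigl( f(\{x\}) - f(\emptyset) \bigr) \bigl( g(\emptyset) - g(\{x\}) \bigr),
\]
and both factors on the right-hand side are non-negative by the monotonicity assumptions.

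For the inductive step, I would fix $x \in \Gamma$, set $\Gamma' = \Gamma \setminus \{x\}$, and, for each $S \subset \Gamma'$, define
\[
\tilde f(S) = p \cdot f(S \cup \{x\}) + (1-p) \cdot f(S), \qquad \tilde g(S) = p \cdot g(S \cup \{x\}) + (1-p) \cdot g(S).
\]
These are the conditional expectations of $f(\Gamma_p)$ and $g(\Gamma_p)$ given $\Gamma_p \cap \Gamma' = S$, and monotonicity is preserved: $\tilde f$ is non-decreasing and $\tilde g$ is non-increasing on $P(\Gamma')$. Applying the base case at the coordinate $x$ for each fixed $S$ gives $\mathbb{E}[f \cdot g \mid \Gamma_p \cap \Gamma' = S] \leq \tilde f(S) \cdot \tilde g(S)$. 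Averaging over $\Gamma'_p$ and then invoking the induction hypothesis on $\tilde f, \tilde g$ on the smaller ground set $\Gamma'$ yields
\[
\mathbb{E}[f \cdot g] \;\leq\; \mathbb{E}[\tilde f \cdot \tilde g] \;\leq\; \mathbb{E}[\tilde f] \cdot \mathbb{E}[\tilde g] \;=\; \mathbb{E}[f] \cdot \mathbb{E}[g],
\]
which completes the induction.

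The only real obstacle is the one already flagged: indicator functions are not preserved under conditioning on a coordinate, so the induction cannot close at the level of events alone; one must commit to the real-valued monotone formulation from the outset. Once this is done, the rest is essentially bookkeeping around the one-variable computation, and the stated inequality follows by specializing to $f = \mathbf{1}_{\mathcal{A}}$ and $g = \mathbf{1}_{\mathcal{B}}$.
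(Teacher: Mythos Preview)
Your argument is correct and is the standard inductive proof of Harris's inequality (essentially the one in Alon--Spencer). Note, however, that the paper does not actually prove this theorem: it is stated in the preliminaries as a known result with a reference to \cite{AlonSpencerBook}, so there is no ``paper's own proof'' to compare against. Your write-up would serve perfectly well as the supplied proof, and the point you flag---that one must pass to real-valued monotone functions because conditioning on a coordinate destroys the $\{0,1\}$-valued property---is exactly the reason the induction is set up that way.
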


Again, for a more detailed discussion, we refer the reader to \cite{AlonSpencerBook}. \par
The final result in this section is a well-known quantitative version of Van der Waerden's Theorem due to Varnavides~\cite{Var59}.

\begin{lemma}\label{lemma:AP-supersaturation}
    For every $r \in \mathbb{N}$ and every $q \geq 2$, there exist $n_0 \in \mathbb{N}$ and $\epsilon > 0$ such that for all $n \geq n_0$, every $(r+1)$-coloring of $[n]$ contains at least $(r+1) \cdot \epsilon \cdot n^2$ monochromatic arithmetic progressions of length $q$. 
\end{lemma}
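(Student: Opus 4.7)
The plan is to apply the classical supersaturation (averaging) argument of Varnavides. First, by Van der Waerden's theorem applied to $r+1$ colors and length $q$, there exists an integer $N = N(r+1, q)$ such that every $(r+1)$-coloring of $[N]$ contains a monochromatic $q$-AP. I fix such an $N$ and take $n$ large compared to $N$.

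Next, I would enumerate the family $\mathcal{A}_N$ of all arithmetic progressions of length $N$ contained in $[n]$. A length-$N$ AP is determined by its smallest element and common difference, so (exactly as in Remark \ref{rmk:hypergraph-container-q-APs}(1)) one has $\abs{\mathcal{A}_N} = \Theta_N(n^2)$, with an explicit lower bound of order $n^2/(2(N-1))$ once $n \geq n_0(N)$. For each $P \in \mathcal{A}_N$, the given $(r+1)$-coloring restricted to $P$ is order-isomorphic to an $(r+1)$-coloring of $[N]$, and hence produces at least one monochromatic $q$-AP contained in $P$.

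The remaining step is a double-counting estimate: I need to bound the number of elements of $\mathcal{A}_N$ that can contain a single fixed $q$-AP. A $q$-AP with common difference $d$ lies inside a length-$N$ AP of common difference $e$ only when $e \mid d$ with $d/e \le (N-1)/(q-1)$, and for each such admissible $e$ the number of positions of the $q$-AP inside a length-$N$ AP is at most $N$. Hence each $q$-AP is contained in at most some constant $C = C(N, q)$ members of $\mathcal{A}_N$. Dividing the total monochromatic count contributed by the $P \in \mathcal{A}_N$ by $C$, I obtain at least $\abs{\mathcal{A}_N}/C = \Omega_{N, q}(n^2)$ monochromatic $q$-APs in $[n]$. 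Choosing $\epsilon > 0$ to absorb $C$, $N$, and the factor $r+1$ completes the argument.

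I do not anticipate any real obstacle here: the argument is a textbook Varnavides-type averaging, and only the combinatorial bound $C(N,q)$ on the number of length-$N$ APs containing a given $q$-AP requires any care. Once $N$ is produced by Van der Waerden's theorem, everything is elementary and quantitative.
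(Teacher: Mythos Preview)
Your proposal is correct and follows essentially the same Varnavides-type averaging argument as the paper: invoke Van der Waerden to get a window length $N$ (the paper calls it $W$), count the $\Theta(n^2)$ length-$N$ APs in $[n]$, each forcing a monochromatic $q$-AP, and divide by a constant bounding how many length-$N$ APs can contain a fixed $q$-AP. The only cosmetic difference is in that last overcounting bound---the paper uses the cruder $W^2$ (two indices determine the progression) where you argue via divisibility of common differences---but both yield a constant depending only on $N$ and $q$.
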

\begin{proof}
From Van der Waerden's theorem we have $W=W(r+1,q)$ such that every $(r+1)$-coloring of $[W]$ yields a monochromatic $q$-AP. Thus, in every coloring of $[n]$ every $W$-AP contains at least one monochromatic $q$-AP.
We observe: 
\begin{enumerate}[label=(\alph*)]
    \item $\# \text{$W$-APs in } [n]  = \Theta \left( \frac{n(n-W)}{W-1} \right)$. 
    \item Every $q$-AP is contained in at most $W^2$ many $W$-APs (fixing the indices of two terms in an arithmetic progression determines the progression). 
\end{enumerate}
We obtain from (a) $\Theta \left (\frac{1}{W} \cdot n^2\right )$ many $q$-APs, however they may be contained in multiple $W$-APs. Since by (b) every $q$-AP is counted at most $W^2$ times, 
the lemma follows with $\epsilon = \Theta \left( \frac{W ^ {-3}}{2(r+1)} \right)$.
\end{proof}

\section{The symmetric case 1-statement} \label{sec:symm-1}
In this section, we present a short proof to the following theorem (originally proved by Rödl and Ruci\'{n}ski \cite{RR95}):
\begin{theorem} \label{thm:symmetric-1-statement}
For every $r \in \mathbb{N}$ and $q \ge 3$, there exists $C>0$ such that the following holds: 
If $p \ge Cn^{-\frac{1}{q-1}}$ then a.a.s.\ every r-coloring of $[n]_p$ contains a monochromatic $q$-AP.
\end{theorem}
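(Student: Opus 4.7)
The plan is to invoke the hypergraph container lemma for $q$-APs (Theorem~\ref{thm:q-AP-container}) with a small parameter $\epsilon > 0$, to be calibrated below, yielding a family $\mathcal{G} \subset P([n])$ of containers and a map $f\colon P([n]) \to \mathcal{G}$ such that every $q$-AP-free $I \subset [n]$ possesses a \emph{fingerprint} $S \subset I$ of size at most $Db$, where $b := n^{(q-2)/(q-1)}$, satisfying $I \subset f(S)$. For $p = Cn^{-1/(q-1)}$ we have $np = Cb$.

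Assume that $[n]_p$ admits an $r$-coloring with no monochromatic $q$-AP. Each color class $C_i$ is $q$-AP-free, so the container lemma supplies $S_i \subset C_i \subset [n]_p$ with $\abs{S_i} \le Db$ and a container $g_i := f(S_i)$ with $C_i \subset g_i$; in particular $[n]_p \subset \bigcup_i g_i$. The key structural observation, to be established via Varnavides' supersaturation lemma (Lemma~\ref{lemma:AP-supersaturation}), is that for any $r$ subsets $g_1, \dotsc, g_r \subset [n]$ each containing fewer than $\epsilon n^2$ many $q$-APs, the uncovered set $U := [n] \setminus \bigcup_i g_i$ has size at least $\gamma n$ for some constant $\gamma = \gamma(q, r) > 0$ independent of the $g_i$'s. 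Indeed, an arbitrary extension (assigning each $x \in g_i$ some such color $i$, and each $x \in U$ an arbitrary color) would be an $r$-coloring of all of $[n]$ with at most $r \epsilon n^2$ monochromatic $q$-APs lying entirely inside some $g_i$, plus at most $\abs{U} \cdot n \le \gamma n^2$ further $q$-APs meeting $U$ (using the degree bound $\Delta_1(H) \le n$ from Claim~\ref{claim:degree-is-n}); choosing $\epsilon$ and $\gamma$ sufficiently small relative to the constant from Lemma~\ref{lemma:AP-supersaturation} contradicts the supersaturation lower bound.

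Granted this uniform gap $\abs{U} \ge \gamma n$, the conclusion follows by a union bound over fingerprint tuples. The event ``$(S_1, \dotsc, S_r)$ witnesses a bad coloring'' is the intersection of the increasing event $\{S_i \subset [n]_p \text{ for all } i\}$ and the decreasing event $\{[n]_p \subset \bigcup_i f(S_i)\}$, and Harris's inequality decouples them:
\[
\mathbb{P}\bigl(\text{bad coloring via } (S_1, \dotsc, S_r)\bigr) \le p^{\abs{\bigcup_i S_i}} \cdot (1-p)^{\gamma n} \le p^{\abs{\bigcup_i S_i}} \cdot e^{-C\gamma b}.
\]
Reindexing the outer sum by $T := \bigcup_i S_i$ (each $T$ arises from at most $(2^r)^{\abs{T}}$ tuples) and applying the standard bound $\sum_{m \le rDb} \binom{n}{m}(2^r p)^m = \exp\!\bigl(O_r(b \log C)\bigr)$ yields a total probability of $\exp\!\bigl(O_r(b \log C) - C\gamma b\bigr) = o(1)$ once $C$ is chosen large, since $C$ eventually dominates $\log C$.

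The main obstacle is the uniform-gap step in the second paragraph: the parameters $\epsilon$ (from the container lemma), $\gamma$ (the gap size), and the Varnavides supersaturation constant must be simultaneously calibrated so that the few $q$-APs tolerated inside the containers, plus the boundary contribution from $U$ (controlled by Claim~\ref{claim:degree-is-n}), still fall short of Varnavides' lower bound. Once this calibration is in place, the container extraction and the Harris-based probability estimate proceed along standard container-method lines.
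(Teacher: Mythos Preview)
Your proposal is correct and follows the same Nenadov--Steger container argument as the paper. The paper makes two minor simplifications you might adopt: the events $\{\bigcup_i S_i \subset [n]_p\}$ and $\{[n]_p \subset \bigcup_i f(S_i)\}$ depend on disjoint coordinate sets (since $S_i \subset f(S_i)$) and are therefore genuinely independent, so Harris is unnecessary; and for the gap step, treating $U$ as an $(r{+}1)$-st color class and applying Lemma~\ref{lemma:AP-supersaturation} to that $(r{+}1)$-coloring shows directly that $U$ itself contains at least $\epsilon n^2$ many $q$-APs, whence $|U| \ge \epsilon n$ by Claim~\ref{claim:degree-is-n}, avoiding your two-parameter calibration of $\epsilon$ and $\gamma$.
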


The general framework of the proof we present here is due to Nenadov and Steger \cite{NenSte16} who applied a similar argument in the setting of graphs.
We begin by describing a general outline of the proof.  \par
From The Hypergraph Container Lemma, we obtain a set of containers $\mathcal{G}$ for $q$-AP-free subsets of $[n]$.
Assume for contradiction that $[n]_p$ has a coloring with no monochromatic arithmetic progression of length $q$, and fix an arbitrary such coloring.
Let ${G_i} \in \mathcal{G}$ be the container for the $i$-th color class. Each container contains strictly fewer than $\epsilon n^2$ many $q$-term arithmetic progressions.
However, by Lemma \ref{lemma:AP-supersaturation} every coloring in $r+1$ colors must contain some color class with at least $\epsilon n^2$ arithmetic progressions.
In our case, we treat the remainder set $[n] \setminus \bigcup_i G_i$ as the final color class. Hence, it must have at least $\epsilon n^2$ arithmetic progressions and therefore has at least $\epsilon n$ many elements.
However, by definition, the remainder set (which depends only on $G_1,\dotsc,G_r$) and $[n]_p$ are disjoint.
Thus, the probability that $[n]_p$ has a coloring with no monochromatic $q$-term arithmetic progression that obeys the coloring constraints set by $(G_1, \dotsc, G_r)$ is at most $(1-p)^{\epsilon n}$.
Finally, we apply a union bound over all possible choices for the $r$-tuple of containers, obtaining that the probability that such $[n]_p$ has a proper $r$-coloring tends to zero.

\begin{proof} [Proof of Theorem \ref{thm:symmetric-1-statement}]
We say that a coloring is ``proper" if it contains no monochromatic $q$-AP. We wish to prove that a.a.s.\ $[n]_p$ admits no such coloring. 
Applying Theorem \ref{thm:q-AP-container} with $\epsilon = \epsilon(r)$ obtained from Lemma \ref{lemma:AP-supersaturation}, we obtain a family of containers $\mathcal{G}$ such that each $G \in \mathcal{G}$ contains fewer than $\epsilon n^2$ many $q$-APs.
If we suppose that there exists a proper coloring for $[n]_p$, then there are $q$-AP-free subsets $H_1,\dotsc,H_r$, such that $\bigcup\limits_{i=1}^r H_i = [n]_p$. \par
By Theorem \ref{thm:q-AP-container}, there exist a constant $D$ and a function $f\colon P([n]) \to \mathcal{G}$ such that for every $i$ there exists a set $S_i \subset H_i$, with $\abs{S_i}{} \leq D \cdot n^{\frac{q-2}{q-1}}$ and $H_i \subset f(S_i) = G_i$. 
By Lemma \ref{lemma:AP-supersaturation}, we obtain that for any coloring of $[n]$ with $r+1$ colors there must be a color class with at least $\epsilon n^2$ arithmetic progressions.  \par
Suppose we color the elements of $\bigcup G_i$ such that only elements of $G_i$ are colored $i$ and the elements of $[n] \setminus \bigcup\limits_{i=1}^r G_i$ are colored $r+1$. Since every $G_i$ contains fewer than $\epsilon n^2$ arithemetic progressions, the set of elements colored $r+1$ must contain $\epsilon n^2$ arithmetic progressions.
Since, by Claim \ref{claim:degree-is-n}, each $k \in [n]$ belongs to at most $n$ many $q$-APs we have 
    \[\abs{[n] \setminus \bigcup\limits_{i=1}^r G_i} \geq \frac{\epsilon n^2}{n} = \epsilon n.\]
\par In summary, the event ``There exists a proper coloring of $[n]_p$'' implies the following two events, for some $S_1, \dotsc, S_r \subset [n]$ with $\abs{S_i} \le Dn^{\frac{q-2}{q-1}}$ for every $i \in \{1,\dotsc, r\}$:
\begin{enumerate}[label=(\alph*)]
    \item $\bigcup_{i=1}^r S_i \subset [n]_p$,  
    \item $[n]_p \subset \bigcup_{i=1}^r f(S_i)$. 
\end{enumerate}
Note that (b) is equivalent to \[[n]_p \cap \left([n] \setminus \bigcup\limits_{i=1}^r f(S_i)\right) = \emptyset.\] Since $S_i \subset f(S_i)$ for every $i$,  (a) and (b) depend on disjoint subsets of $[n]$ and are independent events. 
Hence, the probability of both (a) and (b) occurring is: 
\begin{center}
    $\mathbb{P}\left(S_1,..., S_r \subset [n]_p \land [n]_p \subset \bigcup\limits_{i=1}^r f(S_i)\right) \leq p^{\abs{\bigcup\limits_{i=1}^r S_i}{}} \cdot (1-p)^{\epsilon n}.$ 
\end{center}
Givan a set $S$ with $\abs{S}=s$, there are at most $2^{rs}$ sequences $(S_1,\dotsc,S_r)$ such that $S=\bigcup S_i$.
Taking the union bound over all choices of $S_1,...,S_r$ (grouping by ${s=\abs{\bigcup\limits_{i=1}^r S_i}{}}$) we obtain 
\begin{align*}
    \mathbb{P}([n]_p \ \text{admits a proper coloring}) &\leq \sum_{(S_1,\dotsc,S_r)} p^{\abs{\bigcup\limits_{i=1}^r S_i}{}} \cdot (1-p)^{\epsilon n} \\
    \leq (1-p)^{\epsilon n} \sum_{s=1}^{Drn^{\frac{q-2}{q-1}}}\binom{n}{s}2^{rs}p^s 
    &\leq e^{-\epsilon pn}\sum_{s=1}^{Drn^{\frac{q-2}{q-1}}}\left(\frac{en2^rp}{s}\right)^s .
\end{align*}
\par Recall that $p = Cn^{-\frac{1}{q-1}}$. Since $x \mapsto \left(\frac{ea}{x}\right)^x$ is increasing for $x \leq a$, by choosing $C$ sufficiently large we obtain that $Drn^{\frac{q-2}{q-1}} \le \delta pn$ for some $\delta = \delta(C,D,r) > 0$ which can be made arbitrarily small by choosing $C$ sufficiently large. Hence,
    \begin{align*}
    \sum_{s=1}^{Drn^{\frac{q-2}{q-1}}}\left(\frac{en2^rp}{s}\right)^s
    &\leq Drn^{\frac{q-2}{q-1}} \cdot \left(\frac{en2^rp}{Drn^{\frac{q-2}{q-1}}}\right)^{Drn^{\frac{q-2}{q-1}}} \\
    &\leq \delta pn \cdot \left(\frac{e2^r}{\delta}\right )^{\delta pn}
    % &\leq Drn^{\frac{q-2}{q-1}} \cdot (Ce2^r)^{Drn^{\frac{q-2}{q-1}}}
    \leq e^{\frac{\epsilon pn}{2}}
\end{align*}
for sufficiently large $C$, since $(\frac{1}{\delta})^\delta \to 1$ as $\delta \to 0$.
Hence: 
\[\mathbb{P}([n]_p \ \text{admits a proper coloring}) \leq e^{\frac{-\epsilon p n}{2}} \to 0. \qedhere \] 
\end{proof}

\section{The symmetric case 0-statement} \label{sec:symm-0}
In this section, we prove the following theorem (originally proved by Rödl and Ruci\'{n}ski \cite{RR95}):
\begin{theorem} \label{thm:symmetric-0-statement}
For any integer $q \geq 3$, there exists $c>0$ such that for $p=c \cdot n^{-\frac{1}{q-1}}$, $[n]_p$ can a.a.s.\ be colored by two colors with no monochromatic $q$-term arithmetic progression.
\end{theorem}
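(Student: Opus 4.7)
The plan is to produce, a.a.s., a valid $2$-coloring of $[n]_p$ by showing that the $q$-uniform hypergraph $H$ on vertex set $[n]_p$ with edges the $q$-APs contained in $[n]_p$ is a.a.s.\ $2$-chromatic. The natural tool is the Lov\'{a}sz Local Lemma (LLL), applied to a (suitably preprocessed) uniform random $2$-coloring of $[n]_p$. The starting observation is that $H$ is quite sparse at this threshold: a fixed $q$-AP sits in $[n]_p$ with probability $p^q$, so $\mathbb{E}[e(H)] = \Theta(c^q n^{(q-2)/(q-1)})$, and by Claim \ref{claim:degree-is-n} each $v \in [n]$ satisfies $\mathbb{E}[\deg_H(v) \mid v \in [n]_p] \le n p^{q-1} = c^{q-1}$, a quantity that can be made arbitrarily small by choosing $c$ small.

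In more detail, the LLL applied to a random uniform $2$-coloring of $[n]_p$ shows $H$ to be $2$-chromatic whenever every edge of $H$ intersects at most roughly $2^{q-3}/e$ other edges, which would follow from a uniform vertex-degree bound $\Delta(H) \leq D = D(q)$. I would therefore identify the set $K \subseteq [n]_p$ of ``heavy'' vertices (those with $\deg_H(v) > D$), color them all with color $1$, and then apply the LLL to a uniformly random $2$-coloring of $[n]_p \setminus K$. Provided $K$ does not contain a full $q$-AP, every $q$-AP $A \in E(H)$ still has at least one random vertex, so the event $B_A = \{A \text{ monochromatic}\}$ has $\mathbb{P}[B_A] \leq 2^{1-q}$ and depends on at most $qD$ other $B_{A'}$, which is exactly the LLL hypothesis.

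The main obstacle, and the source of the error in the original Rödl--Ruci\'{n}ski argument that I would be correcting, is to prove that a.a.s.\ $K$ contains no $q$-AP. A direct first-moment calculation shows $\mathbb{E}[|K|]$ to be polynomial in $n$ at the threshold $p = c n^{-1/(q-1)}$, so absence of $q$-APs in $K$ does not follow from a naive union bound over vertex tuples. I would instead carry out a Janson-type moment calculation that controls the joint probability that every vertex of a given $q$-AP in $[n]_p$ is heavy, exploiting the (limited) correlation between the ``heaviness'' events at distinct vertices of a common $q$-AP, and combining it with Harris's inequality to separate the increasing and decreasing components of the event ``$A \subset [n]_p$ and each $v \in A$ is heavy''. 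Making this counting argument precise is the technical heart of the proof.
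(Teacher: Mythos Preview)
Your approach is entirely different from the paper's and, as written, has genuine gaps. The paper makes no use of the Local Lemma; it proves a deterministic structural lemma (Lemma~\ref{lemma:deterministic-2-coloring}) asserting that any $q$-uniform hypergraph that is not $2$-colourable must contain one of a short list of ``$2$-blocking'' configurations (a special cycle, a simple cycle with a handle, a spoiled simple path, or---for $q=3$---the reduced Fano plane), and then a first-moment probabilistic lemma (Lemma~\ref{lemma:probabilistic-lemma}) showing that a.a.s.\ none of these appear in the $q$-AP hypergraph on $[n]_p$ when $p=cn^{-1/(q-1)}$ with $c$ small. The error in R\"odl--Ruci\'nski that the paper corrects lies in this list of obstructions, not in any heavy-vertex or LLL bookkeeping.

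Your plan breaks in two places. First, the bound $\mathbb{P}[B_A]\le 2^{1-q}$ is false whenever $A$ has $k\ge 2$ vertices in $K$: those are forced to colour~$1$, so $\mathbb{P}[B_A]=2^{-(q-k)}$, which can be as large as $1/2$. You therefore need every $q$-AP to meet $K$ in at most one vertex, a far stronger condition than ``$K$ contains no $q$-AP''. Second, and more fatally, the symmetric LLL constraint $e\cdot 2^{1-q}\bigl(q(D-1)+1\bigr)\le 1$ forces $D=1$ for all $q\le 5$, so $K$ must absorb \emph{every} vertex lying in two or more $q$-APs. But the expected number of $q$-APs in $[n]_p$ each of whose vertices lies in a second $q$-AP is of order $n^{(q-2)/(q-1)}\to\infty$: counting tuples $(A,A_1,\dots,A_q)$ with $A_i\cap A=\{a_i\}$ gives $O(n^{q+2})$ choices on $q^2$ vertices, hence expectation $\Theta(n^{q+2}p^{q^2})=\Theta\bigl(c^{q^2}n^{(q-2)/(q-1)}\bigr)$, and a routine second-moment check shows such $q$-APs exist a.a.s. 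Hence with $D=1$ the set $K$ a.a.s.\ contains $q$-APs and your precondition is violated. The Janson/Harris sketch does not address this; the obstruction is structural, which is exactly what Lemma~\ref{lemma:deterministic-2-coloring} is designed to capture.
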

\noindent Note that this implies the 0-statement in Theorem \ref{thm:symmetric-random-van-der-waerden} for any $r \geq 2$.
The proof here is a specialization of the proof by Rödl and Ruci\'{n}ski \cite{RR97} to the random Rado partition theorem, and is included here both for the sake of completeness and as an introduction to the techniques used for the asymmetric case in Section \ref{sec:asymm-0}. The proof consists of two main lemmas.
The deterministic lemma states that every non-2-colorable uniform hypergraph must contain one of a small family of hypergraphs which we refer to as $2$-blocking hypergraphs (which will be defined next).
Then, the probabilistic lemma states that in the random hypergraph of arithmetic progressions in $[n]_p$, the subhypergraphs mentioned in the deterministic lemma almost surely do not appear.
We begin by defining several families of hypergraphs.
\begin{definition}
    A \emph{simple path} is a hypergraph consisting of edges $E_1,\dotsc,E_\ell$, for $\ell \geq 1$, such that $$\abs{E_i \cap E_j} =  \begin{cases}
    1 & \text{if }\abs{i-j}=1, \\
0 &  \text{if }\text{otherwise.}

\end{cases}
$$
A \emph{fairly simple cycle} is a hypergraph that consists of a simple path $(E_1,\dotsc,E_\ell)$, with $\ell \geq 2$, and an edge $E_0$ such that
$$\abs{E_0 \cap E_i} = \begin{cases}
1 & \text{if } i = 1, \\
0 & \text{if } i = 2, \dotsc , \ell - 1, \\
s & \text{if } i = \ell,
\end{cases}
$$
for some $s \geq 1$, and such that $E_0 \cap E_1 \cap E_\ell = \emptyset$. A fairly simple cycle is said to be \emph{simple} if $s = 1$; otherwise, we say it is \emph{special}. 
A path $P$ in a hypergraph $H$ is said to be \emph{spoiled} if it is not an induced subhypergraph of $H$. We call an edge $E \subset V(P)$ such that $E \notin E(P)$ a \emph{spoiling edge} for $P$. \par
The length of a path or a fairly simple cycle is the number of edges in it.
A subhypergraph $H_0$ of $H$ is said to have a \emph{handle} if there is an edge $E$ in H such that $\abs{E} > \abs{E \cap V(H_0)} \geq 2$.
\end{definition}

\begin{definition}
    We call a $q$-uniform hypergraph \emph{$2$-blocking} if it is one of the following:
    \begin{enumerate}
        \item A special cycle.
        \item A simple cycle with a handle.
        \item A spoiled path.
        \item For $q=3$, the 3-uniform, 2-regular, 6-vertex, simple hypergraph, which we call ``the reduced Fano plane'' (See Figure \ref{fig:reduced-fano}) \footnote{The name stems from the fact that this hypergraph is exactly the Fano plane with one vertex removed. It is also sometimes refered to as a Pasch configuration.}.
    \end{enumerate}
\end{definition}

\begin{lemma}[The determinisic lemma]\label{lemma:deterministic-2-coloring}
Let $q \geq 3$ be some integer, and let $H$ be a q-uniform hypergraph which is not 2-colorable.
Then $H$ contains a $2$-blocking hypergraph.
\end{lemma}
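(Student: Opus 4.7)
The plan is to prove the contrapositive: if a $q$-uniform hypergraph $H$ contains no 2-blocking sub-hypergraph, then $H$ admits a proper 2-coloring. Since both the hypothesis and the conclusion decompose across connected components, we may assume $H$ is connected. The strategy is to locate a canonical ``skeleton'' inside $H$ which admits an explicit 2-coloring, and then propagate that coloring to the rest of $H$, using the absence of each forbidden configuration to rule out obstructions.

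First I would pick a maximum-length simple path $P=(E_1,\dotsc,E_\ell)$ in $H$, with linking vertex $v_i \in E_i \cap E_{i+1}$. Since $H$ has no spoiled path, $P$ is an induced sub-hypergraph, and $V(P)$ admits many 2-colorings in which every $E_i$ is non-monochromatic (for instance, color the $v_i$ alternately and then break monochromaticity inside each $E_i$ by brute force). The heart of the proof is a case analysis on any chord edge $F \in E(H) \setminus E(P)$ with $\abs{F \cap V(P)} \ge 2$:
\begin{enumerate}
    \item if $\abs{F \cap E_i} \ge 2$ for some $i$, then $F$ together with a sub-path of $P$ is a spoiled path;
    \item if $F$ meets $V(P)$ in single vertices lying in two non-adjacent edges, it closes a sub-path of $P$ into a simple cycle, and any further vertex of $F$ outside $V(P)$ exhibits this cycle as one with a handle;
    \item if $F$ meets two edges of $P$ in a total of at least $3$ vertices distributed as in the definition of a special cycle, we obtain a special cycle;
    \item other intersection patterns contradict the maximality of $\ell$.
\end{enumerate}
Since $H$ contains none of these 2-blocking sub-hypergraphs, every chord edge attaches to $V(P)$ in a very controlled way. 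I would then complete the 2-coloring of $V(P)$ to one of $V(H)$ by peeling off pendant edges one at a time, using connectedness to reach every vertex and the ruled-out configurations to guarantee no conflict arises.

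For $q=3$ there is a sporadic obstruction -- a tightly packed configuration of four edges on six vertices -- that is not captured by paths and cycles alone, which is precisely why the reduced Fano plane appears as a fourth entry on the 2-blocking list. I would treat this case separately at the end: if $H$ avoids every path- and cycle-type 2-blocking sub-hypergraph yet remains non-2-colorable, a short cycle together with its chord pattern already produces a copy of the reduced Fano plane inside $H$. The main obstacle is the chord-edge case analysis: one must carefully distinguish intersection multiplicities and edge adjacencies in $P$ and match each to exactly one of the four 2-blocking types. This bookkeeping is reportedly where the original R\"odl--Ruci\'nski proof overlooked a configuration, and it is also why the $q=3$ sporadic case must be isolated with care.
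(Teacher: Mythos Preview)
Your plan has a genuine gap in the chord-edge case analysis. In your case~2, a single chord edge $F$ that meets $V(P)$ in two vertices (one each in $E_i$ and $E_j$) closes a sub-path into a simple cycle, but a simple cycle by itself is \emph{not} 2-blocking: the definition requires a simple cycle \emph{with a handle}, and the handle must be a separate edge intersecting the cycle in at least two vertices while having a vertex outside it. The edge $F$ is already part of the cycle, so its vertices outside $V(P)$ are simply cycle vertices, not a handle. Analyzing one chord edge at a time cannot produce this configuration; you need a second edge to serve as the handle, and nothing in your outline explains where that second edge comes from.

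The paper's proof supplies this missing ingredient via a reduction you have omitted: pass first to a 3-edge-critical subhypergraph. Edge-criticality yields the key structural fact that for every edge $E$ and every $v\in E$ there is another edge $E'$ with $E\cap E'=\{v\}$. Applied to two distinct vertices $x,y\in E_1\setminus E_2$ at an end of a longest simple path, this produces \emph{two} edges $E_x,E_y$, each forced by maximality to re-enter $V(P)$. One of them (say $E_x$) closes a cycle with a sub-path, and the other ($E_y$) is then available to serve as the handle; the remaining cases collapse to a special cycle, a spoiled path, or (for $q=3$) the reduced Fano plane. Your contrapositive framing also leaves the final ``peel off pendant edges'' step unjustified: even if every individual edge attaches tamely to $V(P)$, nothing you have established guarantees that a greedy extension of the 2-coloring does not get stuck. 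The edge-criticality reduction sidesteps this entirely, since one argues directly for the existence of a 2-blocking subhypergraph rather than constructing a coloring.
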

\begin{proof}
Recall that we say a hypergraph is 3-edge-critical if it cannot be properly colored with two colors, but any proper subhypergraph of it is 2-colorable. 
We may assume that $H$ is 3-edge-critical; otherwise, we may replace it with one of its 3-edge-critical subhypergraphs.  
\begin{claim}
If $H$ is a 3-edge-critical hypergraph, then for every edge $E \in H$ and for every vertex $v \in E$ there is an edge $E' \in H$ such that $E \cap E' = \{v\}$.
\end{claim}
\begin{proof}
    Let $H$ be 3-edge-critical, and suppose that there are an edge $E \in H$ and a vertex $v \in E$ such that no other edge intersects $E$ in exactly $\{v\}$; thus, every edge $E'$ that contains $v$ also contains another vertex of $E$.
By the 3-edge-criticality, $H$ can be colored red-blue in a way such that only $E$ is monochoromatic, say it is blue. Now, by changing the color of $v$ to red, $E$ is no longer monochromatic, and neither is any other edge that contains $v$, contradicting the fact that $H$ is not 2-colorable.
\end{proof}
Let $P = (E_1,\dotsc,E_\ell)$ be a longest simple path in $H$. By the claim, $\ell \geq 2$. Let $x,y$ be two of the vertices that belong only to $E_1$, and let $E_x, E_y$ be two edges of $H$ that intersect $E_1$ only in $x$ and $y$ respectively. 
Since $P$ is maximal, $h_z = \abs{E_z \cap V(P)} \geq 2$, for $z = x,y$. \par
We may also assume that $E_x, E_y$ only intersect one another and each edge of $P$ in at most a single vertex (otherwise we obtain a special cycle).
Let $i_z = \min \{i \geq 2 : E_z \cap E_i \neq \emptyset \}$, and without loss of generality assume $i_y \leq i_x$.
If $h_z = q$ for some $z$ then $P$ is a spoiled simple path.
Otherwise, either the edges $E_1, \dotsc , E_{i_y}, \dotsc, E_{i_x}, E_x$ form a fairly simple cycle, to which $E_y$ is a handle (see Figure \ref{fig:deterministic-lemma-cycle}) or $E_y \subset (E_1 \cup \dotsb \cup E_{i_x} \cup E_x)$. \par
Assuming the latter case, since $h_y < q$ and we assumed that $\abs{E_x \cap E_y} \le 1$, we must have $E_x\cap E_y = \{u\}$ for some vertex $u \notin V(P)$. We now split into several cases. 
First, if $i_y < i_x$ then the edges $E_1, \dotsc ,E_{i_y}, E_y$ form a fairly simple cycle to which $E_x$ is a handle (since it only intersects the cycle at $x$ and $u$).
Finally, if $i_y = i_x = i$ then either $E_x, E_y, E_i$ form a fairly simple cycle to which $E_1$ is a handle, or $i=2, q=3$ and $E_1, E_2, E_x, E_y$ form the reduced Fano plane.
\end{proof}
\begin{figure}[h]
\centering
\begin{subfigure}[b]{.45\textwidth}
\centering
\includegraphics[width=\textwidth]{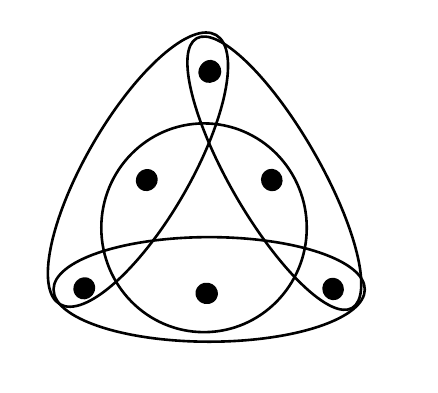}
\caption{The reduced Fano plane}
\label{fig:reduced-fano}
\end{subfigure}
\begin{subfigure}[b]{.45\textwidth}
\centering
\includegraphics[width=\textwidth]{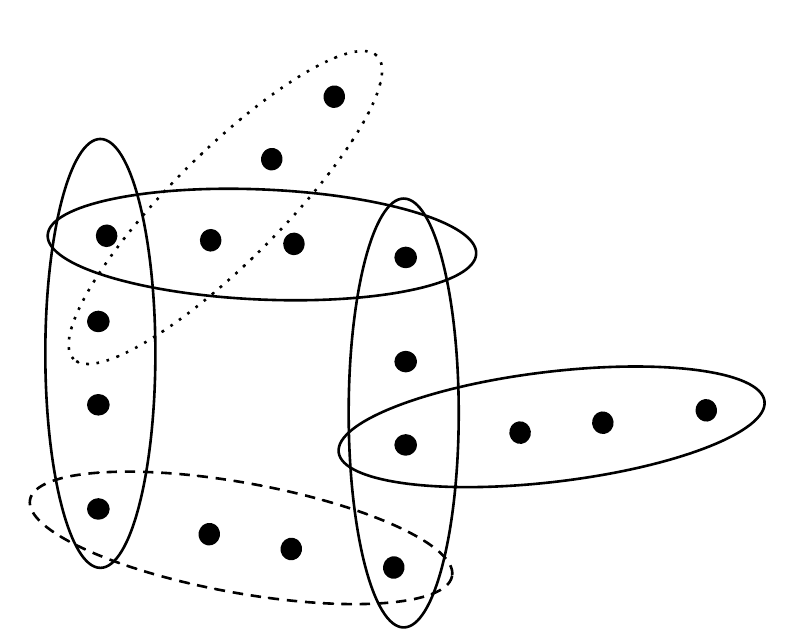}
\caption{$E_x$ (dashed) forms a cycle with $P$ to which $E_y$ (dotted) is a handle.}
\label{fig:deterministic-lemma-cycle}
\end{subfigure}
\end{figure}

\begin{lemma}[The probabilistic lemma]
\label{lemma:probabilistic-lemma}
Let $H$ be the hypergraph with vertex set $V(H) = [n]$, whose edge set is the set of $q$-APs, and let $H_p$ be its random subhypergraph induced by $[n]_p$. If $p=c \cdot n^{-\frac{1}{q-1}}$, then a.a.s.\ $H_p$ contains no $2$-blocking hypergraph, provided that $c$ is sufficiently small.
\end{lemma}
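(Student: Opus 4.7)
The plan is to carry out a first-moment argument. For each of the four types of $2$-blocking hypergraph, I would bound the expected number of its copies in $H_p$, show that the bound is $o(1)$, and conclude by Markov's inequality together with a union bound over the types.

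The central observation is that each $q$-AP has two degrees of freedom (a first term and a common difference), so embedding a hypergraph $F$ into $[n]$ with every edge mapped to a $q$-AP imposes $q-2$ linear constraints per edge on the $v(F)$ coordinates of the embedding. When this system is of full rank, the number of labeled copies $N(F)$ in $[n]$ is $O(n^{v(F)-(q-2)e(F)})$, and the full-rank property is a direct case check for each of the four $2$-blocking types. Substituting $p=cn^{-1/(q-1)}$ gives
\[
\mathbb{E}[\text{copies of }F\text{ in }H_p] \le N(F)\cdot p^{v(F)} \le c^{v(F)}\cdot n^{(q-2)(v(F)/(q-1)-e(F))}.
\]

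It then suffices to verify the strict density inequality $(q-1)\,e(F) > v(F)$, which makes this exponent strictly negative. For a special cycle of length $\ell+1$ with overlap $s\ge 2$, $v=(\ell+1)(q-1)-s+1$ and $e=\ell+1$, so the difference $(q-1)e-v$ equals $s-1\ge 1$. For a simple cycle of length $L$ with a handle meeting it in $t\ge 2$ vertices, $v=L(q-1)+q-t$ and $e=L+1$, giving $t-1\ge 1$. For a spoiled path of length $\ell$, the spoiling edge adds no new vertex but one new edge, so $(q-1)e-v = q-2\ge 1$. For the reduced Fano plane ($q=3$), $v=6$ and $e=4$, giving $2$. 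To finish, I would sum the expected-value bound over the sub-cases parameterized by $L, \ell, s, t$ and, for spoiled paths, the $O(\ell^q)$ choices of \emph{shape} for the spoiling edge inside the path; the factor $c^{v(F)}$ provides geometric decay in the cycle or path length once $c<1$, dominating all polynomial multiplicities.

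The main obstacle is justifying the bound $N(F) = O(n^{v(F)-(q-2)e(F)})$ in the over-determined cases, where some edge of $F$ must pass through $k\ge 3$ vertices already placed by earlier edges: this includes the closing edge of a special cycle, a handle with $t\ge 3$, the spoiling edge of a spoiled path, and the fourth edge of the reduced Fano plane. A naive greedy construction is too generous here, allotting $O(1)$ to such an edge where one in fact needs $O(n^{2-k})$ on average; the tighter bound rests on the combinatorial fact that $k$ generic integers almost never lie in a common $q$-AP. One concrete way to make this rigorous is to reorder the edges so that any over-determined edge is placed first as an unconstrained $q$-AP, and the remaining edges are then counted as $q$-APs through specified vertices.
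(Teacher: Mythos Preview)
Your overall plan is the same as the paper's: a first-moment computation, type by type, with Markov at the end. The density inequality $(q-1)e(F)>v(F)$ you isolate is exactly the one driving the paper's calculations. Where your proposal diverges from a complete argument is the claim $N(F)=O\bigl(n^{\,v(F)-(q-2)e(F)}\bigr)$ via ``full rank,'' together with the suggested fix of reordering the edges. Both fail in concrete cases.

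\medskip
\noindent\textbf{Full rank can genuinely fail for spoiled paths.} Take $q=4$, the simple path $E_1=\{a,a+d,a+2d,a+3d\}$, $E_2=\{a+3d,a+3d+e,a+3d+2e,a+3d+3e\}$, and the spoiling edge $E=\{a,\,a+2d,\,a+3d+e,\,a+3d+3e\}$. The two AP constraints coming from $E$ reduce to the single equation $e=d$; the second is redundant. So the system has rank $5$, not $(q-2)e(F)=6$, and $N(F)=\Theta(n^2)$, not $O(n)$. Your per-shape bound is therefore incorrect. (The first moment still goes through for this shape since $n^2p^7=o(1)$, but you would need a different statement---e.g.\ that the spoiling edge always contributes at least one new independent constraint---and a proof of it. The paper sidesteps this entirely: it splits on whether the spoiling edge meets some path edge in $\ge 2$ vertices, and in the remaining case counts only the sub-configuration formed by $E$ and the two sub-paths joining its first three vertices along the path.)

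\medskip
\noindent\textbf{Reordering does not rescue the reduced Fano plane.} Every ordering of its four edges leaves the last edge with all three vertices already placed; greedy counting then gives $O(n^2\cdot n\cdot 1\cdot 1)=O(n^3)$, and $n^3p^6=\Theta(1)$ at $p=cn^{-1/2}$---not $o(1)$. One genuinely needs $N(F)=O(n^2)$ here, and that requires the algebraic step the paper carries out: writing the three non-initial edges as a $3\times 3$ linear system for $(x_4,x_5,x_6)$ in terms of $(x_1,x_2,x_3)$ and checking $\det\ne 0$ for every admissible choice of coefficients. This is precisely the ``over-determined'' case your last paragraph flags, and your proposed workaround does not handle it.

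\medskip
\noindent\textbf{What the paper does instead.} It first shows simple paths of length $\ge B\log n$ are a.a.s.\ absent (so all remaining $2$-blocking configurations are ``short''), then bounds special cycles, cycles with a handle, spoiled paths, and the reduced Fano one at a time by explicit greedy counts tailored to each structure, using the $\log n$ truncation to control the positional multiplicities. Your geometric-decay-in-$c$ idea can replace the truncation for paths and cycles, but you still need the case-by-case counting (and, for $q=3$, the determinant computation) in place of the blanket full-rank assertion.
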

\begin{proof}
We apply a first-moment argument to several random variables.
Let $B$ be a large enough constant, we will show that a.a.s.\ no path of length $B\log{n}$ exists in $H_p$. We will then show that a.a.s.\ no 2-blocking graph with fewer than $B\log{n}$ edges exist. \par
The following calculations are used several times throughout the arguments.
First, the number of choices for a $q$-term arithmetic progression in $[n]$ is $O(n^2)$, and the probability that all of its $q$ elements belong to $[n]_p$ is $(cn^{-\frac{1}{q-1}})^q = O(n^{-1}p)$, so the expected number of $q$-term arithmetic progressions in $[n]_p$ is $O(np)$. \par
We say that an edge $A$ extends a simple path $P=(E_1, \dotsc, E_\ell)$ in $H$, if $A \cap V(P) = \{v\}$ and $\{i : v \in E_i\}$ is exactly $\{1\}$ or $\{\ell\}$.
Observe that the number of choices for an arithmetic progression $A$ that extends a simple path $P$ is bounded by $2 \cdot (q - 1) \cdot q \cdot n$, and that the probability that $ A \setminus V(P) \subset [n]_p$ is $ (cn^{-\frac{1}{q-1}})^{q-1}$. Therefore, we may bound the expected number of possible edge choices for extending a simple path in $H_p$ by some constant $c_r$, which may depend on $q$ but not on $n$, and may be made arbitrarily small by changing $c$. \par 
The number of arithmetic progressions containing a fixed set $\{v_1, \dotsc, v_s\}$ with $s \geq 2$, is bounded by a constant (for instance $\binom{q}{s}$ is a trivial bound). Hence the expected number of arithmetic progressions in $H_p$ that contain $\{v_1, \dotsc, v_s\}$ is $O(p^{q-s})$, assuming $\{v_1, \dotsc, v_s\} \subset [n]_p$. \par
Since the number of simple paths of length $t$ in $H$ is at most $O(n^{1+t})$, 
and a simple path of length $t$ must contain $q$ vertices for the first edge and $q-1$ vertices for every subsequent edge,
the expected number of simple paths of length $t$ in $H_p$ is 
\[O(n^{1+t} \cdot p^{q + (t-1)(q-1)}) = O(np \cdot c_r^t).\] 
We are now ready to proceed with the argument. \par
Let $U$ be the random variable counting the number of simple paths in $H_p$ of length at least $B \cdot \log{n}$. We bound the expected value of $U$ by summing over different lengths of paths. 
By the above computations,
$$\mathbb{E}U \leq O(\sum_{t \geq B\log n} np \cdot c_r ^ t) = o(1),$$
provided that $c_r$ is sufficiently small and $B$ is sufficiently large.

Let $W$ be the random variable counting the number of special cycles in $H_p$.
For a given edge there are only a constant number of edges that interesect it in more than one vertex, therefore the number of such edge pairs in $H$ is $O(n^2)$.
We may bound the number of special cycles of length $t$ in $H$ by first fixing $E_0$ and $E_{t-1}$, then fixing a simple path of length $t-3$ starting from some vertex of $E_0$ and finally choosing $E_{t-2}$. Note that we have $O(1)$ many choices for $E_{t-2}$ since it must include exactly one vertex from $E_{t-3}$ and one vertex from $E_{t-1}$. Moreover, these vertices cannot coincide as $E_0 \cap E_1 \cap E_{t-1} = \emptyset$. In total we obtain that there are $O(n^2 \cdot n^{t-3}) = O(n^{t-1})$ such cycles in $H$. 
Since we must have at least $q+1$ vertices for $E_0$ and $E_{t-1}$, an additional $(t-3)\cdot (q-1)$ vertices for $E_1, \cdots, E_{t-3}$, and exactly $q-2$ additional vertices for $E_{t-2}$ we require a total of $2q-1 + (t-3)\cdot (q-1) = 1 + (t-1)\cdot (q-1)$ vertices in $H_p$.
Hence,
$$\mathbb{E}W = O(\sum_{t>2}n^{t-1}p^{1 + (t-1) \cdot (q-1)}) = O(p \sum_{t>2}c_r^{t-1})=O(p) = o(1),$$
provided that $c_r$ is sufficiently small. \par
Let $X$ be the random variable counting the number of simple cycles with handles of length at most $B\log n$ in $H_p$.
We denote the length of the cycle by $t$, and the size of the intersection between the handle and the cycle by $k$.
Similarly to the previous argument, there are $O(n^t)$ many cycles of length $t$ in $H$. 
Since the handle must attach in at least two vertices of the cycle we may bound the number of handles in $H$ by $O(\log^2{n})$.
For fixed values of $t$ and $k$, such a configuration requires $t \cdot (q-1)$ vertices for the cycle and an additional $q-k$ vertices for the handle.
Summing over $t$ and $k$,
$$\mathbb{E}X = O(\sum_{t=3}^{B\log n}\sum_{2 \leq k \leq q-1} n^t \cdot p^{t\cdot (q-1) + q-k} \cdot \log^2{n}) = O(p\log^3{n})= o(1).$$
\par Let $Y$ be the random variable counting the number of spoiled simple paths of length less than $B\log n$ in $H_p$.
Let $E$ be an induced edge that is not one of the edges of a spoiled path. We now split into two cases.
First, suppose that $E$ intersects some edge in at least two vertices, we obtain a path $(E_1, \dotsc, E_\ell)$ and an edge $E_0 = E$, such that $\abs{E_0 \cap E_1} = s \geq 2,\  \abs{E_0 \cap (E_\ell \setminus E_1)} = t \geq 1$.
If $t = 1$ and $E_0 \cap E_1 \cap E_\ell = \emptyset$  this yields a special cycle, and thus the expected number of such hypergraphs in $H_p$ is $o(1)$.
Otherwise, $t' =\abs{E_\ell \cap (E_0 \cup E_1)} \ge 2$. As $\abs{E_1 \cap E_\ell} \leq 1$ we have $s+t' \leq q+1$. Hence, the expected number of choices for $E_0, E_1, E_\ell$ is 
\[O(\sum_{s,t'}np \cdot p^{q-s} \cdot p^{q-t'}) = O(\sum_{s,t'}np^{2q - (s + t') + 1}) = O(np^q) = o(1). \] \par
Assuming that $E$ intersects every edge of a path $(E_1, \dotsc, E_\ell)$ in at most one vertex, we may define an ordering function $f\colon E \to [\ell]$ by $f(v) = \min\{i: v \in E_i \}$ and order the vertices of $E$ by the values of $f$.
Observe the shortest sub-path containing the first three vertices $ \{v_1,v_2,v_3\} \subset E$, and denote $t_1 = f(v_2) - f(v_1)$ and $t_2 = f(v_3) - f(v_2)$. 
Let $Y'$ count such configurations in H. 
We have an expected $O(np)$ many choices for $E$, and $O(c_r^{t-1}p^{q-2})$ many choices for a path of length $t$ between two fixed vertices.
Summing over $t_1$ and $t_2$ we obtain
$$\mathbb{E}Y' = O(\sum_{t_1\geq 1}\sum_{t_2\geq 1} np \cdot c_r^{t_1-1}p^{q-2} \cdot c_r^{t_2-1} p^{q-2} ) = O(np^{2q-3}) = O(p^{q-2})=o(1).$$
Thus,
\[ \mathbb{E}Y = o(1). \]
\par Finally, let $Z$ be the random variable counting the number of copies of the reduced Fano plane in $H_p$.
First, we show that there are $O(n^2)$ copies of the reduced Fano plane in $H$.
Suppose that  $\{x_1, x_2, x_3\}$ is an edge in $H$, since there are $O(n^2)$ choices for an arithmetic progression of length three it suffices to show that there are at most $O(1)$ many choices for $x_4,x_5,x_6$ such that $\{x_1, \dotsc, x_6\}$ induce a copy of the reduced Fano plane.
Denote $\vec{v} = (x_1, x_2, x_3)$ and $\vec{x} = (x_4,x_5,x_6)$.
Since every pair of vertices in $\{x_4, x_5, x_6\}$ form a $3$-AP with one vertex in $\{x_1,x_2,x_3\}$, we obtain the following family of equations:
\[
    A\cdot \vec{x} = \vec{v} \quad\text{ for }\quad A = \begin{pmatrix} a&b& 0 \\ 0& c& d \\ e& 0& f \end{pmatrix} \quad\text{ with }\quad a,b,c,d,e,f \in \{-1,\frac{1}{2}, 2\}.
\] \par
Since $\det(A) = acf + bde \ne 0$ for all choices of $A$ we obtain that $A$ is invertible, and therefore fixing $\vec{v}$ determines $\vec{x}$.
Since the reduced Fano plane contains six vetices we obtain that the expected number of such configurations is $O(n^2p^6) = O(n^{2 - \frac{6}{2}}) = O(\frac{1}{n}) = o(1)$. \par
Thus, by Markov's inequality $\mathbb{P}(U=W=X=Y=Z=0) \to 1$ as $n \to \infty$ completing the proof.
\end{proof}
With the two lemmas in hand, the proof of Theorem \ref{thm:symmetric-0-statement} is immediate.
\begin{proof} [Proof of Theorem \ref{thm:symmetric-0-statement}]
Let $H$ be the hypergraph with vertex set $V(H)=[n]_p$, whose edge set is the set of $q$-term arithmetic progressions. 
By Lemma \ref{lemma:probabilistic-lemma}, $H$ a.a.s.\ contains no $2$-blocking hypergraph.
Therefore, by Lemma \ref{lemma:deterministic-2-coloring}, $H$ is 2-colorable. 
\end{proof}

\section{The asymmetric case 1-statement} \label{sec:asymm-1}
In this section, we prove the following result:
\begin{theorem}\label{thm:asymmetric-1-statement}
For every $r \geq 2$ and  $q_1 > q_2 \geq \dotsb \geq q_r \in \mathbb{N}, q_r \geq 3$, there exists $C>0$ such that the following holds: \\
If $p \ge Cn^{-\frac{q_2}{q_1(q_2-1)}}$ then a.a.s.\ for every r-coloring of $[n]_p$ there is some $i$ such that there exists a monochromatic $q_i$-AP colored $i$.
\end{theorem}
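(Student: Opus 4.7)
The plan is to extend the Nenadov--Steger container argument of Section \ref{sec:symm-1} to the asymmetric setting. Since only the two largest lengths affect the threshold, I may assume $q_2=q_3=\dotsb=q_r$; thus a hypothetical proper coloring of $[n]_p$ would yield a class $H_1$ that is $q_1$-AP-free and classes $H_2,\dotsc,H_r$ each $q_2$-AP-free. First I apply Theorem \ref{thm:q-AP-container} for $q_2$-APs to every $H_i$ with $i\ge 2$, obtaining fingerprints $S_i \subset H_i$ of size at most $D n^{(q_2-2)/(q_2-1)}$ and containers $G_i=f(S_i)$ each containing fewer than $\epsilon n^2$ many $q_2$-APs. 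Writing $R = [n] \setminus \bigcup_{i\ge 2} G_i$, the inclusion $[n]_p \cap R \subset H_1$ forces $[n]_p\cap R$ to be $q_1$-AP-free.

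Next I would prove an asymmetric analogue of Lemma \ref{lemma:AP-supersaturation}: every $r$-coloring of $[n]$ of type $(q_1,q_2,\dotsc,q_2)$ contains at least $\epsilon' n^2$ monochromatic APs of the appropriate lengths. The proof is the same averaging argument as in Lemma \ref{lemma:AP-supersaturation}, using the asymmetric Van der Waerden number $W=W(q_1,q_2,\dotsc,q_2)$ in place of $W(r,q)$. Applied to the coloring of $[n]$ that assigns color $i\ge 2$ to $G_i$ (ties broken arbitrarily) and color $1$ to $R$, and choosing $\epsilon$ small relative to $\epsilon'$, the supersaturation forces $R$ to contain at least $\epsilon' n^2$ many $q_1$-APs. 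Janson's inequality applied to these $q_1$-APs, with the standard codegree estimates (pairs of $q_1$-APs sharing $k$ elements contribute $p^{2q_1-k}$, with $O(n^3)$ pairs for $k=1$ and $O(n^2)$ for $k\ge 2$), then bounds the probability that $[n]_p \cap R$ is $q_1$-AP-free by $\exp\bigl(-\Omega(C^{q_1} n^{(q_2-2)/(q_2-1)})\bigr)$.

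The last step is a union bound over admissible tuples $(S_2,\dotsc,S_r)$, grouped by $s=\abs{\bigcup S_i}$; the two events involved are independent because $\bigcup S_i \subset \bigcup G_i$ is disjoint from $R$. The main difficulty, absent in the symmetric case, is the slack in this union bound. In Section \ref{sec:symm-1} the ratio $np/s_{\max}$ is a constant tunable via $C$, so the entropy cost of locating a fingerprint of size $s_{\max}$ is only $\Theta(s_{\max})$ and is beaten by $\mu/2=\Omega(s_{\max})$ once $C$ is large; here $np/s_{\max} = \Theta\bigl(n^{(q_1-q_2)/(q_1(q_2-1))}\bigr)$ grows polynomially in $n$, so the entropy cost inflates to $\Theta(s_{\max}\log n)$ while Janson still yields only $\exp(-\Theta_C(s_{\max}))$. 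I expect to bridge this gap by a second invocation of the container lemma, this time applied to the $q_1$-AP hypergraph restricted to $R$, to extract an additional color-$1$ fingerprint $T$; a joint union bound over $(T,S_2,\dotsc,S_r)$, in the spirit of the asymmetric container strategy pioneered by Aigner-Horev and Person in the Rado setting, should then recover the desired exponential decay and complete the proof.
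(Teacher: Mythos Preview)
Your diagnosis of the obstacle is accurate: after applying Theorem~\ref{thm:q-AP-container} to the colour classes $H_2,\dotsc,H_r$ and invoking Janson on the $q_1$-APs inside the remainder $R$, the naive union bound over fingerprints $(S_2,\dotsc,S_r)$ costs roughly $\exp(s_{\max}\log n)$, which is not absorbed by the Janson term $\exp(-\Theta_C(s_{\max}))$.

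The proposed remedy, however, does not close this gap. A second container application to the $q_1$-AP hypergraph (whether on $[n]$ or on $R$) produces a fingerprint $T$ of size $\Theta\bigl(n^{(q_1-2)/(q_1-1)}\bigr)$, and one checks
\[
\frac{q_1-2}{q_1-1}-\Bigl(1-\frac{q_2}{q_1(q_2-1)}\Bigr)=\frac{q_1-q_2}{q_1(q_1-1)(q_2-1)}>0,
\]
so $|T|\gg pn$. The union bound over $T$ alone then costs $\exp\bigl(\Theta(n^{(q_1-2)/(q_1-1)}\log n)\bigr)$, which swamps the gain $(1-p)^{\epsilon n}=\exp(-\Theta(pn))$ coming from the enlarged remainder; the joint bound over $(T,S_2,\dotsc,S_r)$ therefore diverges. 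The Aigner-Horev--Person argument you invoke does succeed, but it is not a bare second application of containers: it requires a considerably more delicate auxiliary construction that your sketch does not supply.

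The paper avoids the issue by a different device. The key step is a \emph{good-colouring} reduction: any proper colouring may be modified so that every element of $[n]_p$ lying in no $q_1$-AP receives colour~$1$. This forces $S_2\cup\dotsb\cup S_r$ to lie inside the random set $Q=\{a\in[n]_p:\text{$a$ belongs to some $q_1$-AP in $[n]_p$}\}$, whose expected size is $O(n^2p^{q_1})=O(s_{\max})$, collapsing the fingerprint entropy to the correct scale. Lemma~\ref{claim:APs-are-mostly-indpendent} shows that $Q$ is a.a.s.\ covered (up to a negligible exception of size $n^{1-1/(q_2-1)-\delta}$) by pairwise-disjoint $q_1$-APs; the union bound is then taken over families of at most $rs_{\max}$ disjoint $q_1$-APs rather than over arbitrary subsets of $[n]$. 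Harris's inequality---not independence---decouples the increasing event ``$S$ is well-covered'' from the decreasing event ``$A_{(S_2,\dotsc,S_r)}$ contains no $q_1$-AP''. With this device the union-bound cost is only $\exp\bigl(O(\log C)\,n^{1-1/(q_2-1)}\bigr)$, which is beaten by the Janson term $\exp\bigl(-\Omega(C^{q_1})\,n^{1-1/(q_2-1)}\bigr)$ once $C$ is large.
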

We begin by describing a rough outline of the proof, which follows similar ideas to \cite{GugNenRaj}. 
We say that a coloring is proper if it contains no monochromatic $q_i$-AP colored $i$; we wish to prove that a.a.s.\ $[n]_p$ admits no such coloring. 
First, we reduce the problem by showing that every proper coloring may be modified to yield a ``good" coloring where, additionally, every element not in a $q_1$-AP is colored 1.
Thus, it would suffice to show that no such good coloring exists.\par
Using \nameref{thm:hypergraph-container} we obtain a set of containers $\mathcal{G}$ for the $q_2$-AP-free subsets of $[n]$.
Supposing for contradiction that a good coloring exists, we fix one arbitrary such coloring along with containers $G_2, \dotsc, G_r \in \mathcal{G}$ for all the color classes but the first. \par
For $i \ge 2$, we denote the set of elements colored $i$ by $I_i$.
We denote the remainder set $I_1 = [n]_p \setminus (I_2 \cup \dotsb \cup I_r)$.
Since, by Lemma~\ref{lemma:AP-supersaturation}, $[n] \setminus (G_2 \cup \dotsb \cup G_r)$ contains at least $\epsilon n^2$ many arithmetic progressions of length $q_1$, the set of elements colored $1$ is unlikely to be $q_1$-AP-free.
Indeed, in Lemma \ref{lemma:no-q-ap-janson-bound}, we show that the probability that the set of elements colored $1$ contains no $q_1$-AP is exponentially small in $n^2p^{q_1}$. \par
Note, that the existence of a good coloring implies two events: First, the signature sets for the containers $G_2, \dotsc, G_r$ must all be covered by $q_1$-APs in $[n]_p$, and second, the remainder set $I_1$ must not contain a single $q_1$-AP.
Using Harris's inequality, we show that these two events are negatively correlated. \par
Finally, we apply a union bound over all possible choices for the tuple of containers $(G_2, \dotsc, G_r)$, by iterating over their signature sets $S_2,\dotsc, S_r$. 
A critical part of the union bound argument is Lemma  \ref{claim:APs-are-mostly-indpendent}, which roughly states that if $p=\Theta(n^{-\frac{q_2}{q_1(q_2-1)}})$ then typically $q_1$-APs rarely intersect, and therefore most of $S = \bigcup S_i$ is covered by isolated $q_1$-APs in $[n]_p$. 
\par
We begin by proving the following lemmas:

\begin{lemma}
\label{lemma:no-q-ap-janson-bound}
Suppose $\mathcal{A}$ is a collection of $\Omega(n^2)$ $q$-APs in $[n]$, and $np^{q-1} \ll 1$. Then,
$$ \mathbb{P}([n]_p \text{ does not contain any member of } \mathcal{A}) \leq \exp(-\Omega(n^2p^q)).$$
\end{lemma}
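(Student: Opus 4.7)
\noindent The plan is a direct application of Janson's inequality to the family $\mathcal{A}$, with $\Gamma = [n]$. Setting $I_A = \mathbf{1}[A \subset [n]_p]$ and $X = \sum_{A \in \mathcal{A}} I_A$, one has $\mu = \mathbb{E}X = |\mathcal{A}|\, p^q = \Omega(n^2 p^q)$. The event in question is $\{X = 0\}$, so if we can establish $\Delta \le \mu$ (with room to spare) then Janson's first bound yields
\[
\mathbb{P}(X = 0) \le e^{-\mu + \Delta/2} \le e^{-\mu/2} = \exp\bigl(-\Omega(n^2 p^q)\bigr),
\]
as required. The whole proof therefore reduces to estimating
\[
\Delta \;=\; \sum_{\substack{A \neq B \in \mathcal{A} \\ A \cap B \neq \emptyset}} p^{|A \cup B|} \;=\; \sum_{s=1}^{q-1} \sum_{\substack{A \neq B \in \mathcal{A} \\ |A \cap B| = s}} p^{2q - s}.
\]

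\noindent The bookkeeping will split according to whether $s \ge 2$ or $s = 1$. For $s \ge 2$, two common vertices almost determine an arithmetic progression: by the $\Delta_2(H) \le q^2$ bound recalled in Remark~\ref{rmk:hypergraph-container-q-APs}, once $A$ is chosen, there are only $O(1)$ choices of $B$ sharing at least two vertices with $A$. Since $|\mathcal{A}| = O(n^2)$, each $s \in \{2, \dotsc, q-1\}$ contributes at most $O(n^2 p^{2q-s}) \le O(n^2 p^{q+1})$ to $\Delta$. For $s = 1$, Claim~\ref{claim:degree-is-n} gives $\Delta_1(H) \le n$, so once a common vertex is chosen there are $O(n)$ choices of $B$; summing over the (at most) $q$ vertices of $A$ and then over $A$, the $s=1$ contribution is $O(n^3 p^{2q-1})$.

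\noindent Both estimates are now compared with $\mu = \Omega(n^2 p^q)$. The $s \ge 2$ contribution is $O(n^2 p^{q+1}) = O(\mu \cdot p)$, which is $o(\mu)$ since the hypothesis $np^{q-1} \ll 1$ forces $p = o(1)$. The $s = 1$ contribution is $O(n^3 p^{2q-1}) = O(\mu \cdot np^{q-1}) = o(\mu)$ directly from the hypothesis. Combining, $\Delta = o(\mu)$, so for $n$ large enough $\Delta/2 \le \mu/2$ and Janson completes the proof.

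\noindent There is no real obstacle here; the only thing to watch is that the $s = 1$ term is exactly the place where the assumption $np^{q-1} \ll 1$ is used, which is why it appears in the hypothesis. The $s \ge 2$ terms are handled automatically by the pair-degree bound for $q$-APs, regardless of $p$, as long as $p \to 0$.
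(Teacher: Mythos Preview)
Your proof is correct and essentially identical to the paper's own argument: both apply Janson's inequality directly, split $\Delta$ according to the intersection size, use the $\Delta_1 \le n$ and $\Delta_2 \le q^2$ bounds to obtain $\Delta = O(n^3 p^{2q-1} + n^2 p^{q+1}) = o(\mu)$, and conclude via $\mathbb{P}(X=0) \le e^{-\mu + \Delta/2}$.
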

\begin{proof}
Enumerate the elements of $\mathcal{A} = \{E_i : i \in I\}$. For each $i \in I$, let $X_i$ be the indicator random variable for the event $E_i \subset [n]_p$ and let $X = \sum X_i$. Observe that
\begin{align*}
    \mu &= \mathbb{E}X = \Omega(n^2p^q), \\
    \Delta &= \sum_{\substack{i \neq j \\ E_i \cap E_j \neq \emptyset}}\mathbb{P}(E_i \cup E_j \subset [n]_p) = \sum_{i \in I}\sum_{1\leq k \le q-1}\sum_{\substack{j \in I \\ \abs{E_i\cap E_j} = k}} p^{2q-k}.
\end{align*}
Note that for a fixed $i$ and $k>1$ there are only at most $q^2 = O(1)$ many $j$s such that $\abs{E_i \cap E_j} = k$, and for $k=1$ there at most $O(n)$ such $j$s.
For a fixed $i$ this implies,
\[\sum_{1\leq k \leq q-1}\sum_{\substack{j \in I\\ \abs{E_i\cap E_j} = k}} p^{2q-k} = O(np^{2q-1} + \sum_{2\leq k \leq q-1} p^{2q-k}) = O(np^{2q-1} + p^{q+1}),\]
and thus, as $\abs{\mathcal{A}} = O(n^2)$,
\[\Delta = O(n^3p^{2q-1} + n^2p^{q+1})\ .\]
Moreover, by our assumption that $np^{q-1} \ll 1$ we have \[n^3p^{2q-1} +  n^2p^{q+1}\ll n^2p^q. \] 
Hence, by Janson's inequality, \[ \mathbb{P}(X=0) \leq \exp(-\mu + \frac{\Delta}{2}) = \exp(-\Omega(\mu)) = \exp(-\Omega(n^2p^q))\]
concluding the proof.
\end{proof}

\begin{definition}\label{def:mostly-independent}
    We say that a $q_1$-AP in $[n]_p$ is \emph{isolated}, if it does not intersect any other $q_1$-AP in $[n]_p$.
We define the following random variables:
\begin{align*}
    Q &= \bigcup\{A \subset [n]_p : A \text{ is a $q_1$-AP}\}, \\
    Q_I &= \bigcup\{A \subset [n]_p : A \text{ is an isolated $q_1$-AP}\}.
\end{align*}
Let $\delta = \frac{\min\{q_2, q_1-q_2\}}{2q_1(q_2-1)}$.
If $\abs{Q\setminus Q_I} < n^{1-\frac{1}{q_2-1} - \delta}$ we say that $Q$ is \emph{mostly independent}.
\end{definition}
\begin{lemma}\label{claim:APs-are-mostly-indpendent}
    If $p=O(n^{-\frac{q_2}{q_1(q_2-1)}})$ then $Q$ is mostly independent a.a.s.
\end{lemma}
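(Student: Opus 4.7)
The plan is to apply Markov's inequality to $\abs{Q \setminus Q_I}$, so the task reduces to bounding $\mathbb{E}\abs{Q \setminus Q_I}$. An element lies in $Q \setminus Q_I$ exactly when it belongs to some $q_1$-AP $A \subset [n]_p$ that is not isolated in $[n]_p$, meaning there is another $q_1$-AP $B \neq A$ with $B \subset [n]_p$ and $A \cap B \neq \emptyset$. Since each such $A$ contributes at most $q_1$ elements to $Q \setminus Q_I$, it follows that
\[
    \mathbb{E}\abs{Q \setminus Q_I} \leq q_1 \sum_{\substack{A \neq B \\ A \cap B \neq \emptyset}} \mathbb{P}(A \cup B \subset [n]_p),
\]
where the sum runs over ordered pairs of $q_1$-APs in $[n]$.

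The next step is to stratify the sum by $k = \abs{A \cap B}$ and count pairs. For $k = 1$: each of the $O(n^2)$ choices of $A$ meets $O(n)$ other $q_1$-APs in a single point (since each of the $q_1$ vertices of $A$ lies in $O(n)$ arithmetic progressions by Claim \ref{claim:degree-is-n}), giving $O(n^3)$ such pairs at cost $p^{2q_1-1}$ apiece. For $k \geq 2$: fixing $A$ together with the set $A \cap B$ determines $B$ up to $O(1)$ choices, since the positions within $B$ of the two extreme elements of $A \cap B$ force the common difference and offset of $B$; this yields $O(n^2)$ pairs at cost $p^{2q_1 - k}$, and the dominant such contribution is $k = q_1 - 1$, namely $O(n^2 p^{q_1+1})$.

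Substituting $p = O(n^{-q_2/(q_1(q_2-1))})$ into both bounds, a short exponent computation gives
\[
    n^3 p^{2q_1-1} = O\left(n^{1 - 1/(q_2-1) - (q_1 - q_2)/(q_1(q_2-1))}\right) \text{ and } n^2 p^{q_1+1} = O\left(n^{1 - 1/(q_2-1) - q_2/(q_1(q_2-1))}\right).
\]
Since $2\delta = \min\{q_2, q_1 - q_2\}/(q_1(q_2-1))$, both right-hand sides are $O(n^{1 - 1/(q_2-1) - 2\delta})$, so $\mathbb{E}\abs{Q \setminus Q_I} = O(n^{1 - 1/(q_2-1) - 2\delta})$. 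Markov's inequality then yields $\mathbb{P}(\abs{Q \setminus Q_I} \geq n^{1 - 1/(q_2-1) - \delta}) = O(n^{-\delta}) = o(1)$, as required.

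The main technical point to verify is the $k \geq 2$ count: it hinges on the fact that fixing any two elements of a $q_1$-AP determines the progression up to $O(q_1^2) = O(1)$ choices, coming from the possible pairs of positions those elements may occupy inside the AP. Observe also that the definition of $\delta$ is exactly calibrated to this argument, with the two terms $q_2/(q_1(q_2-1))$ and $(q_1-q_2)/(q_1(q_2-1))$ in the minimum corresponding respectively to the $k \geq 2$ and $k = 1$ contributions; the hypothesis $q_1 > q_2$ is what keeps $\delta$ strictly positive and the argument alive.
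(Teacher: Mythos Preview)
Your proof is correct and follows essentially the same approach as the paper: bound $\abs{Q\setminus Q_I}$ by (a constant times) the number of ordered pairs of intersecting $q_1$-APs in $[n]_p$, stratify by the size of the intersection, and apply Markov's inequality. The only cosmetic difference is that the paper carries out the exponent bound for general $m=\abs{A\cap B}$ and then maximises, whereas you identify up front that $k=q_1-1$ dominates among the $k\ge 2$ terms; your closing remark about how the two halves of $\min\{q_2,q_1-q_2\}$ correspond to the $k\ge 2$ and $k=1$ cases is a nice addition not made explicit in the paper.
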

\begin{proof}
First we compute the expected number of sets that are a union of two intersecting $q_1$-APs. 
First, since $\frac{1}{q_2 - 1} - \frac{q_2}{q_1(q_2-1)} = \frac{q_1 - q_2}{q_1(q_2-1)} \ge 2\delta$,
\begin{align*}
    \mathbb{E}(\text{\#pairs of $q_1$-APs sharing exactly one element}) &= O(n^2 \cdot n \cdot p^{2q_1 - 1}) \\
    = O(n^{1 - \frac{2}{q_2 - 1} + \frac{q_2}{q_1(q_2 - 1)}})  &= O(n^{1 - \frac{1}{q_2-1} - 2\delta}).
\end{align*}
Second, for every $1 < m < q_1$ 
\begin{align*}
    \mathbb{E}(\text{\#pairs of $q_1$-APs sharing exactly $m$ elements}) &= O(n^2 \cdot p^{2q_1 - m}) \\
                                                                 &= O(n^{-\frac{2}{q_2-1} + \frac{mq_2}{q_1(q_2-1)}}).
\end{align*}
Since $\frac{mq_2}{q_1(q_2-1)} \le \frac{(q_1-1)q_2}{q_1(q_2-1)} = 1 + \frac{1}{q_2-1} - \frac{q_2}{q_1(q_2-1)} \le 1 + \frac{1}{q_2-1} - 2\delta$,
\begin{align*}
    \mathbb{E}(\text{\#pairs of $q_1$-APs sharing $m$ elements}) 
                                                                 &= O(n^{1 - \frac{1}{q_2 - 1} - 2\delta}).
\end{align*}
In particular, since $\abs{Q\setminus Q_I} \le \abs{\{(A,B): A \cap B \ne \emptyset, A,B \in Q\}}$,
\[\mathbb{E}\abs{Q \setminus Q_I} \le O(n^{1 - \frac{1}{q_2 - 1} - 2\delta}).\]
Finally, we obtain from Markov's inequality,
\begin{align*}
    \mathbb{P}(\abs{Q \setminus Q_I} \ge  n^{1 - \frac{1}{q_2 - 1} - \delta}) \le 
O\left(\frac{n^{1 - \frac{1}{q_2 - 1} - 2\delta}}{n^{1 - \frac{1}{q_2 - 1} - \delta}}\right) =
O(n^{-\delta})  = o(1).
\end{align*}
Hence, a.a.s.\ 
\[\abs{Q \setminus Q_I} <n^{1-\frac{1}{q_2-1} - \delta}. \qedhere\] 
\end{proof}
We are now ready to prove Theorem \ref{thm:asymmetric-1-statement}.
\begin{proof}[Proof of Theorem \ref{thm:asymmetric-1-statement}]
First, since $p$ depends only on $q_1$ and $q_2$, we may assume that $q_2 = q_3 = \dotsb = q_r$. 
We say that a coloring is proper if it contains no monochromatic $q_i$-AP colored $i$; we wish to prove that a.a.s.\ $[n]_p$ admits no such coloring. \par
By our assumption, $p \ge Cn^{-\frac{q_2}{q_1(q_2-1)}}$ for some sufficiently large $C >0$.
Since not admitting a proper coloring is an increasing event, without loss of generality we may assume that $p=Cn^{-\frac{q_2}{q_1(q_2-1)}}$.\par
Note that the elements of the set $\{a \in [n]_p : a \text{ does not belong to a $q_1$-AP}\}$ may all be recolored $1$ for any proper coloring of $[n]_p$, without creating a monochromatic $q_1$-AP. We say that such a proper coloring is a ``good'' coloring. Since, by recoloring, the existence of a proper coloring implies the existence of a good coloring it suffices to show that no good coloring exists.\par 
Suppose that there is such a coloring; then for each $i \in [r]$, the set $I_i$ of elements colored $i$ contains no $q_i$-APs. 
By \nameref{thm:hypergraph-container} for every $\epsilon >0$ and every $i \geq 2$ there exist $S_i \subset I_i \subset G_i$ with $\abs{S_i} \leq s_{\max} = O(n^{1-\frac{1}{q_2-1}})$ and such that $G_i$, which depends only on $S_i$, contains at most $\epsilon n^2$ many $q_2$-APs (the implicit constant in the definition of $s_{\max}$ may depend on $\epsilon$ and $q_2$).
By our assumption, $$I_1 = [n]_p \setminus (I_2 \cup \dotsb \cup I_r) \supset [n]_p \setminus (G_2 \cup \dotsb \cup G_r) = [n]_p \cap ([n]\setminus (G_2 \cup \dotsb \cup G_r)).$$
In particular,
\begin{gather*}
    I_1 \text{ contains no $q_1$-APs} \implies  
    [n]_p \cap ([n]\setminus (G_2 \cup \dotsb \cup G_r)) \text{ contains no $q_1$-APs}.
\end{gather*}
For brevity, we write $A_{(S_2,\dotsc,S_r)} = [n]_p \cap \left([n]\setminus (G_2 \cup \dotsb \cup G_r)\right)$, as $G_i$ depends only on $S_i$.
Since $q_1 > q_2$, the number of $q_1$-APs in any set of integers is at most as large as the number of $q_2$-APs.
Therefore, for every choice of $(S_2, \dotsc, S_r)$, Lemma \ref{lemma:AP-supersaturation} implies that $[n]\setminus (G_2 \cup \dotsb \cup G_r)$ contains at least $\epsilon n^2$ many $q_1$-APs.
Hence, by Lemma \ref{lemma:no-q-ap-janson-bound} 
$$\mathbb{P}(A_{(S_2,\dotsc,S_r)} \text{ contains no $q_1$-AP}) \leq e^{-D_1n^2p^{q_1}}$$
for some constant $D_1 > 0$ which depends only on $\epsilon$ and $q_1$.\par
Let $Q$ and $Q_I$ be the variables defined in Definition \ref{def:mostly-independent}.
Suppose $S = \cup_{i=2}^rS_i$ is covered by elements of $Q$; we fix a largest subset of $S$ that is covered by pairwise-disjoint arithmetic progressions of length $q_1$ in $[n]_p$ and denote it $S'$.
Since $S \setminus S' \subset S \cap (Q \setminus Q_I)$, if $Q$ is mostly independent then 
$ \abs{S\setminus S'} < n^{1-\frac{1}{q_2-1}-\delta} $.
Thus, if a good coloring exists and $Q$ is mostly independent then there exists some choice of  $(S_2, \dotsc, S_r)$ such that 
\begin{enumerate}
    \item $S$ is covered by $q_1$-APs in $[n]_p$.
    \item $\abs{S \setminus S'} < n^{1-\frac{1}{q_2-1}-\delta}$.
    \item $A_{(S_2,\dotsc,S_r)}$ contains no $q_1$-AP.
\end{enumerate}
For shorthand, we say that $S$ is ``well-covered" if it satisfies conditions 1 and 2.\par
Let $P = \mathbb{P}([n]_p\text{ admits a ``good" coloring})$.
We wish to show that $P = o(1)$.
We first note $P$ may be bounded by the sum of probabilities of two other events: either $Q$ is not mostly independent or there exists a tuple $(S_2, \dotsc, S_r)$ such that the above three events hold.
Since by Lemma~\ref{claim:APs-are-mostly-indpendent} the probability that $Q$ is not mostly independent is $o(1)$ we obtain 
\begin{gather*}
P
 \leq o(1) + 
 \sum_{(S_2,\dotsc,S_r)}\mathbb{P}(S \text{ is well-covered }  \land \ A_{(S_2,\dotsc,S_r)}  \text{ contains no $q_1$-AP}) .
\end{gather*}
Note, that the event ``$S$ is well-covered" is increasing, while the event ``$A_{(S_2,\dotsc,S_r)}$ contains no $q_1$-AP" is decreasing. Therefore by Harris's inequality we obtain
\begin{align*}
P
 &\leq o(1) + 
 \sum_{(S_2,\dotsc,S_r)}\mathbb{P}(S \text{ is well-covered})  \cdot \mathbb{P}( A_{(S_2,\dotsc,S_r)}  \text{ contains no $q_1$-AP}) \\
 &\leq o(1) + \sum_{(S_2,\dotsc,S_r)}\mathbb{P}(S \text{ is well-covered})  \cdot  e^{-D_1n^2p^{q_1}}.
\end{align*}
Our goal is now to obtain a bound on \[\sum_{(S_2,\dotsc,S_r)}\mathbb{P}(S \text{ is well-covered}).\] 
Let $\mathcal{C}(A)$ denote the event ``$A$ is covered by pairwise-disjoint $q_1$-APs".
Since there are at most $r^{\abs{S \setminus S'}}$ many ways to distribute the elements of $S \setminus S'$ to $S_2,\dotsc,S_r$,
we obtain
\begin{gather*}
\sum_{(S_2,\dotsc,S_r)}\mathbb{P}(S \text{ is well-covered}) 
\leq \sum_{t=0}^{n^{1-\frac{1}{q_2-1}-\delta}}\binom{n}{t}r^t \cdot \sum_{(S_2',\dotsc,S_r')}\mathbb{P}(\mathcal{C}(S')),
\end{gather*}
where the second sum ranges over all $(r-1)$-tuples of sets $(S'_2, \dotsc, S'_r)$ satisfying $\abs{S'_i} \le s_{\max}$ for each $i$ and $S' = \cup_{i=2}^rS'_i$;
which we may bound from above by
\[
    O(e^{3\log n\cdot n^{1-\frac{1}{q_2-1} - \delta}}) \cdot \sum_{(S_2',\dotsc,S_r')}\mathbb{P}(\mathcal{C}(S')).
\]
We now move on to bound \[\sum_{(S_2',\dotsc,S_r')}\mathbb{P}(\mathcal{C}(S')).\]
Suppose $S' = \cup_{i=2}^rS'_i$ is fixed, then there are $(r-1)^{\abs{S'}}$ many ways to distribute its elements into $r-1$ different subsets; hence,
\begin{gather*}
    \sum_{(S'_2,\dotsc,S'_r)}\mathbb{P}(\mathcal{C}(S'))  
    \leq O(2^{rs_{\max}}) \sum_{\abs{S'} < rs_{\max}} \mathbb{P}(\mathcal{C}(S')).
\end{gather*} \par
% Observe that,
% $$\sum_{\abs{S} < rs_{\max}}\mathbb{P}(S \subset Q_I) = \mathbb{E}\sum_{s < rs_{\max}}\binom{\abs{Q_I}}{s}.$$
We note that $\sum_{\abs{S'} < rs_{\max}} \mathbb{P}(\mathcal{C}(S'))$ is simply the expected number of sets of size at most $rs_{\max}$ that are covered by pairwise-disjoint $q_1$-APs.
This, in turn, may be bounded from above by 
\[
\sum_{N \le rs_{\max}} \sum_{s < rs_{\max}}\binom{q_1N}{s} \mathbb{E}(\#\text{collections of }N\text{ pairwise-disjoint $q_1$-APs in $[n]_p$}). \]
Since the expected number of choices for collections of $N$ pairwise-disjoint $q_1$-APs in $[n]_p$ is at most $\frac{(n^2p^{q_1})^N}{N!}$, we conclude
% Summing over $N$ - the number of pairwise-disjoint $q_1$-APs needed to cover $S'$,
\begin{align*}
    \sum_{\abs{S'} < rs_{\max}} \mathbb{P}(\mathcal{C}(S')) 
    &\le \sum_{N \le rs_{\max}} \sum_{s < rs_{\max}} \frac{(n^2p^{q_1})^N}{N!} \binom{q_1N}{s} \\
    &\le \sum_{N \le rs_{\max}} \frac{(n^2p^{q_1})^N}{N!} 2 ^{q_1N} \\
    &\le \sum_{N \le rs_{\max}} \left(\frac{e2^{q_1}n^2p^{q_1}}{N}\right)^N.
\end{align*}
Note, that $N \le rs_{\max} \le D_2' n^{1 - \frac{1}{q_2-1}}$ for some $D_2' > 0$ which does not depend on~$C$.
Since $x \to \left(\frac{ea}{x}\right)^x$ grows for $x \leq a$ and $n^2p^q_1 = C^{q_1} n^{1 - \frac{1}{q_2 - 1}}$, for $C$ large enough we have
\[
    \left(\frac{e2^{q_1}n^2p^{q_1}}{N}\right)^N \le \left(\frac{(e2C)^{q_1}}{D_2'}\right)^{D_2' n^{1 - \frac{1}{q_2 - 1}}} \le e^{D_2' q_1 \log (2eC) n^{1 - \frac{1}{q_2 - 1}}}.
\]
Hence,
\begin{align*}
    \sum_{N < rs_{\max}}  \left(\frac{e2^{q_1}n^2p^{q_1}}{N}\right)^N  
    \le \sum_{N < rs_{\max}}  e^{D_2' q_1 \log (2eC) n^{1 - \frac{1}{q_2 - 1}}} \le e^{O(\log C n^{1 - \frac{1}{q_2 - 1}})}.
\end{align*}
Finally we obtain, 
\begin{align*}
    \sum_{(S_2,\dotsc,S_r)}\mathbb{P}(S \text{ is well-covered}) 
    &\le O(e^{O(n^{1 - \frac{1}{q_2 - 1}} \log C + \log n\cdot n^{1-\frac{1}{q_2-1} - \delta})}) \\ &\le e^{D_2 \log C n^{1-\frac{1}{q_2-1}}}
\end{align*}
for some constant $D_2 > 0$ which does not depend on C. \\
Therefore, for $C$ large enough, 
\begin{align*}
    P
    &\leq o(1) + \mathbb{P}(S \text{ is well-covered})\cdot e^{-D_1n^2p^{q_1}} \\
  &= o(1) + O(e^{D_2 \log Cn^{1 - \frac{1}{q_2 - 1}})}) \cdot O(e^{-D_1C^{q_1}n^{1 - \frac{1}{q_2 - 1}}}) \to 0
\end{align*}
\end{proof}
\section{The asymmetric case 0-statement} \label{sec:asymm-0}
In this section, we show that for any integers $q_1 > q_2$ there exists a sufficiently small positive $c > 0$ such that if $p=c \cdot n ^{-\frac{q_2}{q_1(q_2-1)}}$ the elements of $[n]_p$ can a.a.s.\ be colored red/blue without a monochromatic $q_1$-AP colored red or a monochromatic $q_2$-AP colored blue.
We note that this is sufficient for the 0-statement for any other number of colors.
We begin by making several definitions that will assist us in stating the results of this section in the language of hypergraphs.
\begin{definition}
    Throughout this section we will deal with hypergraphs with edges of two possible cardinalities $q_1$ and $q_2$, we call such hypergraphs $(q_1,q_2)$-uniform. We will refer to edges as \emph{long} or \emph{short} edges, depending on their cardinalities.
    We say that a $(q_1,q_2)$-uniform hypergraph is \emph{asymmetrically-2-colorable} if its vertices can be colored red/blue with no long edge colored red, and no short edge colored blue. 
\end{definition}
\begin{definition}
Let $H(n,q_1,q_2)$ be the hypergraph with vertex set $V(H) = [n]$, whose edge set is the set of arithmetic progressions of lengths $q_1$ and $q_2$.
We denote by $H(n,q_1,q_2, p)$ the random subhypergraph of $H(n,q_1,q_2)$ induced by $[n]_p$. 
\end{definition}
With these definitions in hand, we are ready to state this section's main result:
\begin{theorem}[Asymmetric 0-statement] 
\label{thm:asymmetric-0-statement} 
For any integers $q_1 > q_2 \geq 3$, there exists $c>0$ such that for $p\le c \cdot n^{-\frac{q_2}{q_1(q_2-1)}}$, $H(n,q_1,q_2,p)$ is asymmetrically-2-colorable a.a.s.
\end{theorem}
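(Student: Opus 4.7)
The plan is to mirror the two-lemma strategy of Section~\ref{sec:symm-0}: I would first prove a deterministic lemma isolating a finite family of ``asymmetrically 2-blocking'' subhypergraphs that must appear in every $(q_1,q_2)$-uniform hypergraph which is not asymmetrically-2-colorable, and then verify by first-moment estimates that $H(n,q_1,q_2,p)$ a.a.s.\ contains no such subhypergraph whenever $p\le c\cdot n^{-q_2/(q_1(q_2-1))}$ for a sufficiently small constant $c>0$. Combining the two lemmas, exactly as at the end of Section~\ref{sec:symm-0}, would give Theorem~\ref{thm:asymmetric-0-statement}.

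For the deterministic step, I would pass to a 3-edge-critical subhypergraph $H'$ of $H$: $H'$ is not asymmetrically-2-colorable, but every proper subhypergraph is. The analogue of the flipping argument used in the proof of Lemma~\ref{lemma:deterministic-2-coloring} should show that in $H'$, each long edge $E$ must, at each of its vertices $v$, be ``witnessed'' by a \emph{short} edge $E'$ with $E\cap E'=\{v\}$, and each short edge must be witnessed at each vertex by a \emph{long} edge---otherwise flipping $v$'s colour would correct the unique violated edge. I would then take a longest simple path $(E_1,\dotsc,E_\ell)$ in $H'$ and imitate the case analysis of Lemma~\ref{lemma:deterministic-2-coloring}, branching at each step on whether the edges involved are long or short. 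The expected output is a short list of blocking configurations---analogues of special cycles, simple cycles with a handle, spoiled paths, and a handful of small exceptional hypergraphs---each tagged by the lengths of the edges appearing in it.

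For the probabilistic step, I would run first-moment arguments on each blocking configuration as in Lemma~\ref{lemma:probabilistic-lemma}. The cost of extending a simple path by a single edge is $O(np^{q_i-1})$ depending on whether the new edge is long or short. At $p=c\cdot n^{-q_2/(q_1(q_2-1))}$ one computes $np^{q_1-1}=c^{q_1-1}n^{-(q_1-q_2)/(q_1(q_2-1))}=o(1)$, so long-edge extensions are cheap to forbid, while $np^{q_2-1}=c^{q_2-1}n^{(q_1-q_2)/q_1}$ is polynomially large, so short-edge extensions alone are not rare. This asymmetry is what forces the deterministic structural lemma to exploit the long ``witness'' edges at each vertex of the critical core: every blocking configuration must contain enough long edges so that its expected count in $H(n,q_1,q_2,p)$ tends to $0$ after absorbing a small constant power of $c$. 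The threshold $n^{-q_2/(q_1(q_2-1))}$ is then exactly the density at which the borderline balanced configurations---one long edge ``capped'' by short edges---start to appear, matching the 1-statement of Section~\ref{sec:asymm-1}.

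The main obstacle I anticipate is producing and controlling the correct list of blocking configurations, precisely because short edges are abundant at this density. Simple paths consisting entirely of short edges are not rare, so the structural lemma must ensure that every critical core contains sufficiently many long edges, either by refining the notion of ``longest path'' (for instance, by giving priority to long edges when extending) or by post-processing a purely-short path into a blocking configuration involving a long witness edge. I also expect a collection of small mixed-length exceptional configurations, generalising the role of the reduced Fano plane in Lemma~\ref{lemma:deterministic-2-coloring}, to arise and to require individual first-moment checks at the threshold. Once the right family is identified and shown to be unavoidable in non-asymmetrically-2-colorable critical hypergraphs, the probabilistic calculations should proceed in direct parallel with Lemma~\ref{lemma:probabilistic-lemma}.
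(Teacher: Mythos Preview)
Your overall architecture---a deterministic lemma isolating a finite list of ``blocking'' configurations in an edge-critical subhypergraph, followed by first-moment estimates---matches the paper's, and your flipping argument correctly recovers the key fact that every edge in the critical core is \emph{covered} at each vertex by an edge of the opposite length. You also correctly identify the central difficulty: short-edge extensions cost $np^{q_2-1}\gg 1$, so ordinary simple paths of short edges are abundant and cannot serve as the backbone of the argument.

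Where your plan stops short is in the resolution of that difficulty. The paper does not prioritise long edges when extending a path, nor does it post-process a short-edge path; instead it redefines ``simple path'' from the outset as a sequence of \emph{blocks}, where a block is a short edge together with a full simple cover by $q_2$ long edges, and consecutive blocks share exactly one long edge. Extending such a path by one block costs $O(n^{q_2}p^{q_1(q_2-1)})=O(c^{q_1(q_2-1)})$, which is a small constant, so the expected number of block-paths of length $\ell$ is $O(c_r^{\ell}\,n^{1-1/(q_2-1)})$. This is the calculation that makes the first-moment argument go through, and it dictates what the blocking configurations must be: a short edge with a \emph{non-simple} cover, a \emph{spoiled} block-path, a block-path with a \emph{spoiled extension}, and a block-path with a \emph{saw} (short edges hanging off the terminal long edge). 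There is no Fano-type exceptional object here; the exceptional work is instead arithmetic.

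That arithmetic is the second thing your plan underestimates. Showing that non-simple covers are rare requires a lower bound of the form $|E_1\cup\dotsb\cup E_r|>2q_1(1-1/r)$ for any $r$ long covering APs of a common short edge, and the paper proves this via a detailed case analysis of intersection patterns of APs with related common differences (ratios in $\{2,3,4,6\}$). Similarly, the saw case splits into several sub-cases depending on how the saw edges overlap. These are not routine adaptations of Lemma~\ref{lemma:probabilistic-lemma}; they form the bulk of Section~\ref{sec:asymm-0}. Once you adopt the block-path definition and are prepared for the AP-intersection lemmas, your outline becomes the paper's proof.
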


The proof we present here is similar in nature to the proof by Rödl and Ruci\'{n}ski~\cite{RR97} of the (symmetric) random Rado partition theorem and consists of two main lemmas.
First, in Lemma \ref{lemma:asymmetric-deterministic-2-coloring}, we show that a $(q_1,q_2)$-uniform hypergraph is asymmetrically-2-colorable unless it contains a member of a small family of hypergraphs which we call $2$-blocking. 
Then, Lemma \ref{lemma:asymmetric-probabilistic-lemma} states that those hypergraphs a.a.s.\ do not appear in $H(n,q_1,q_2,p)$. 
We begin by making several definitions.

\begin{definition}
    We say that an edge $E = \{a_1,\dotsc,a_{q}\}$ has a \emph{cover} if there are edges $E_1,\dotsc,E_{q}$ such that $E \cap E_i = \{a_i\}$ and $\abs{E_i} \ne \abs{E}$ for all $i \in [q]$.
    We say that a cover for an edge is \emph{simple} if $E_i \cap (\bigcup_{j \ne i} E_j) = \emptyset$ for all $i \in [q]$.
    If every edge in a hypergraph $H$ has a cover, we say $H$ is \emph{covered}. 
\end{definition}
\begin{definition}
    A \emph{simple path} of length $\ell$ is a hypergraph consisting of short edges $E_1, \dotsc,E_\ell$, and covering long edges $E_{1,1}, \dotsc, E_{1,q_2}, E_{2,1}, \dotsc, E_{\ell, q_2}$ such that $E_{i,1}, \dotsc, E_{i,q_2}$ cover $E_i$ and such that:
    \begin{enumerate}
        \item $E_{i+1,1} = E_{i,q_2}$ for every $i < \ell$,
        \item no two edges of the same cardinality intersect.
    \end{enumerate}
    A simple path of length one is called a \emph{block}; thus, a simple path consists of blocks, such that every pair of consecutive blocks share a long edge. 
    For convenience, we refer to a single long edge as a simple path of length zero.
\end{definition}
\begin{definition}
    We say that a simple path $P$ of length $\ell$ has a \emph{saw} if for every $v_i \in E_{1,1} \setminus E_1$ there exists a short edge $S_i$ such that $S_i \cap E_{1,1} = \{v_i\}$ and $\abs{S_i \cap V(P)} = 2$. 
    We call the edges $S_i$ the \emph{saw edges} for $P$.
\end{definition}
\begin{definition}
    We say that a simple path $P$ of length $\ell$ is \emph{spoiled} if there exists an edge $E \notin E(P)$ such that $\abs{E \cap V(P)} \ge 3$ and $E \cap E_{\ell, q_2} = \{v\}$ for some $v \notin E_\ell$.
\end{definition}
\begin{definition}
    We say that a simple path $P$ of length $\ell$ has a \emph{spoiled extension}, if there exists a short edge $E_{\ell+1}$ along with a simple cover $E_{\ell+1,1}, \dotsc, E_{\ell+1,q_2}$, such that $E_{\ell+1} \cap V(P) = \{v\}$ for some $v \in E_{\ell, q_2} \setminus E_\ell$, $E_{\ell+1,1} = E_{\ell,q_2}$ and there exists $i \in \{2, \dotsc, q_2\}$ such that $E_{\ell+1,i} \cap V(P) \ne \emptyset$.
\end{definition}

\begin{figure}[h]
\centering
\begin{subfigure}[b]{.30\textwidth}
\centering
\includegraphics[width=\textwidth]{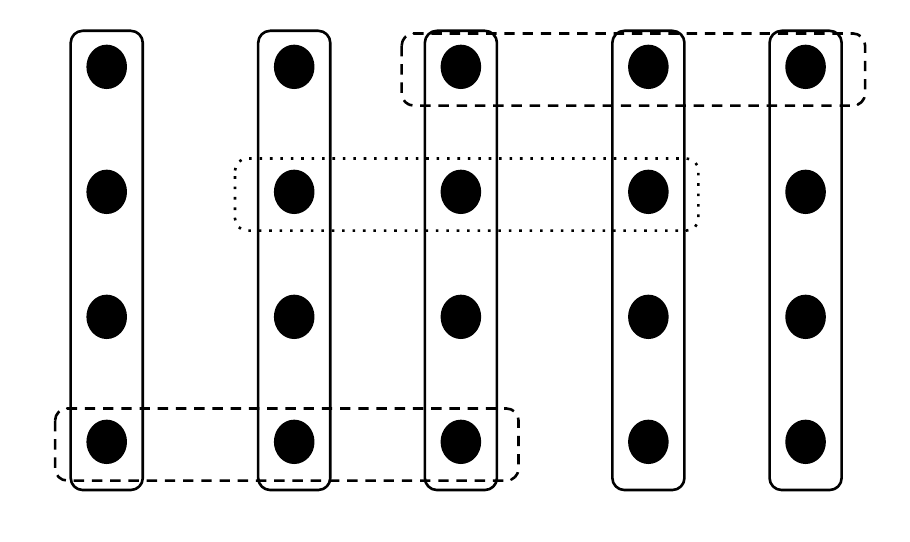}
\caption{A spoiled simple path of length 2, with the spoiling edge dotted.}
\end{subfigure}
\hfill
\begin{subfigure}[b]{.30\textwidth}
\centering
\includegraphics[width=\textwidth]{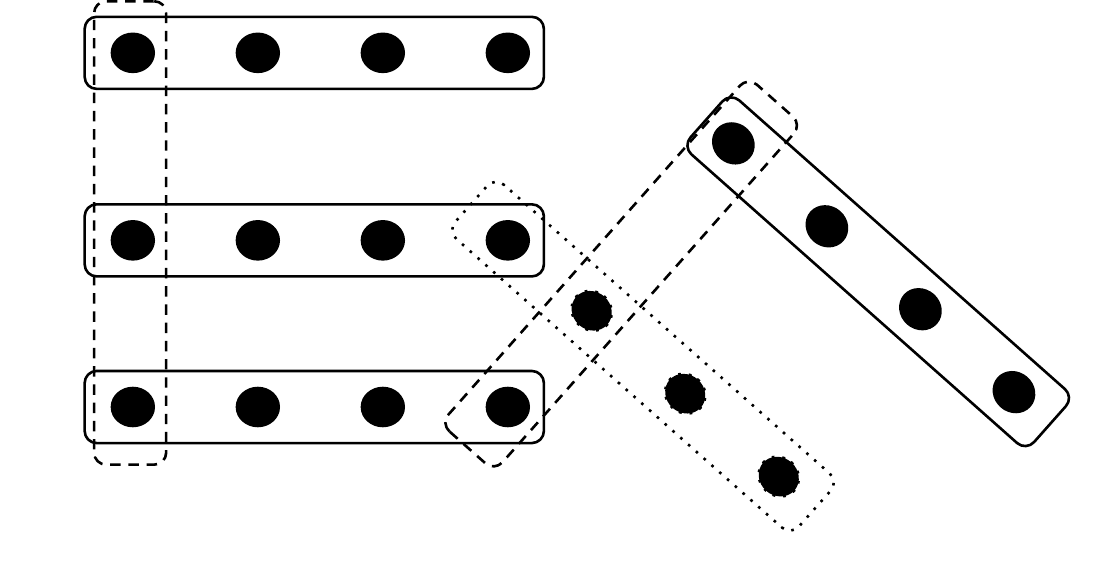}
\caption{A block with a spoiled extension, with the spoiling long edge dotted.}
\end{subfigure}
\hfill
\begin{subfigure}[b]{.30\textwidth}
\centering
\includegraphics[width=\textwidth]{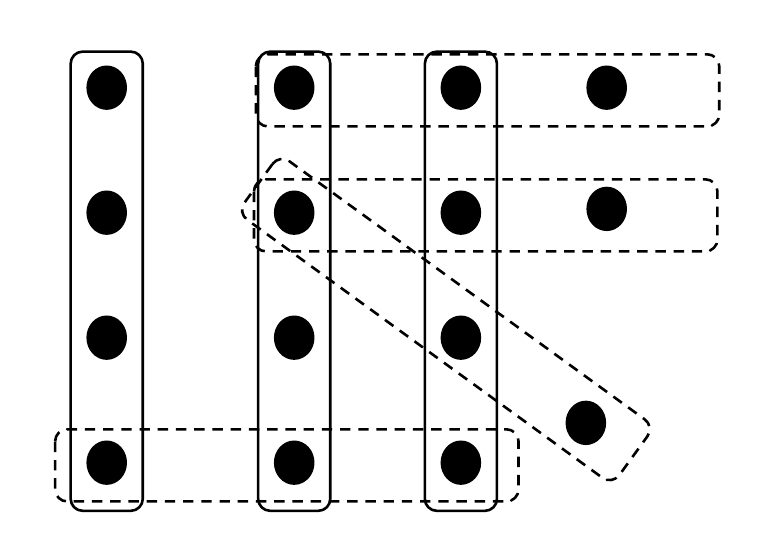}
\caption{A path of length one with a saw.}
\end{subfigure}
\label{fig:asym-hypergraphs}
\caption{Examples of 2-blocking hypergraphs for $q_1=4, q_2=3$.}
\end{figure}

\begin{definition}
    We say that a $(q_1,q_2)$-uniform hypergraph is \emph{2-blocking} if it is one of the following:
    \begin{enumerate}
        \item A short edge with a non-simple cover.
        \item A spoiled simple path.
        \item A simple path with a saw.
        \item A simple path with a spoiled extension.
    \end{enumerate}
\end{definition}
\begin{lemma}[The deterministic lemma]\label{lemma:asymmetric-deterministic-2-coloring}
    Let $q_1 > q_2 \geq 3$ be some integers, and let $H$ be a $(q_1,q_2)$-uniform hypergraph which is not asymmetrically-2-colorable.
Then $H$ contains a 2-blocking hypergraph. 
\end{lemma}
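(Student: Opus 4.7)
The plan mirrors the symmetric argument in Lemma~4.3: reduce to an edge-critical counterexample, use minimality to force cover structure around every edge, then take a longest simple path in $H$ and analyse why it cannot be extended.

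Assume without loss of generality that $H$ is edge-critical, i.e.\ removing any edge yields an asymmetrically-2-colorable hypergraph. The first goal is to show that every short edge has a cover. Given a short edge $E$, take a witness coloring $c$ for $H\setminus\{E\}$; it must fail on $H$, and since the only new short edge is $E$ and no long edge is all-red under $c$, $E$ itself must be all-blue in $c$. For every $v\in E$, switching $v$ to red cannot create an all-blue short edge, so the new coloring still fails; the failure must be some long edge $L_v$ becoming all-red, which forces $v$ to have been the unique blue vertex of $L_v$ in $c$. Since $E$ was entirely blue, this gives $L_v\cap E=\{v\}$. A symmetric argument with the roles of long/short and red/blue exchanged yields: for every long edge $L$ and $v\in L$, there is a short edge $S_v$ with $S_v\cap L=\{v\}$.

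If some short edge has a non-simple cover, $H$ contains a 2-blocking hypergraph of type~1 and we are done. Otherwise every short edge has a simple cover, so simple paths exist in $H$. Take a longest simple path $P=(E_1,\dotsc,E_\ell)$ with terminal long edge $L=E_{\ell,q_2}$, pick $v\in L\setminus E_\ell$, and by the second cover claim obtain a short edge $S$ with $S\cap L=\{v\}$. If $\abs{S\cap V(P)}\ge 3$, then $S$ is a spoiling edge for $P$ (type~2). Otherwise $S$ has a simple cover $L_1,\dotsc,L_{q_2}$ with $L_1=L$. Were $\abs{S\cap V(P)}=1$ and every $L_i$ for $i\ge 2$ disjoint from every existing long edge of $P$ and from $V(P)\setminus S$, then appending the block $(S;L_1,\dotsc,L_{q_2})$ to $P$ would contradict maximality. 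Thus either (a) some $L_i$ with $i\ge 2$ meets $V(P)$, yielding a spoiled extension and hence type~4; or (b) $\abs{S\cap V(P)}=2$, producing an additional short chord into $P$.

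In case~(b), I would transfer the analysis to the other end of $P$: a simple path can be read in either direction, so by symmetry one can play the same extension game on $E_{1,1}$. For each $v_i\in E_{1,1}\setminus E_1$, the second cover claim gives a short edge $S_i$ with $S_i\cap E_{1,1}=\{v_i\}$; if any such $S_i$ has $\abs{S_i\cap V(P)}\ge 3$ we get type~2, if any forces a spoiled extension we get type~4, and otherwise $\abs{S_i\cap V(P)}=2$ for every $i$, which is exactly a saw and gives type~3.

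The main obstacle is the combinatorial bookkeeping in the extension step: one must verify that the four listed 2-blocking structures exhaust every obstruction to extending a longest simple path. In particular, when case~(b) holds, one has to track where the chord $u\in (S\cap V(P))\setminus\{v\}$ can land (inside a short edge of $P$, inside an interior long edge, or on a boundary vertex) and show that, together with the symmetric analysis at $E_{1,1}$, this is always absorbed by types~2, 3, or~4. A secondary technical point is handling the degenerate base case $\ell=0$ (when $P$ is a single long edge) uniformly with $\ell\ge 1$, and checking that the orientation-reversal used to transport the saw condition from one end of $P$ to the other is always legitimate.
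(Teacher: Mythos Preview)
Your strategy is the paper's: pass to an edge-critical subhypergraph, prove the cover claim (every edge has a covering edge of the opposite length at each vertex), dispose of type~1, and analyse a longest simple path at its terminal long edge. The argument is correct, but your detour through a \emph{single} vertex $v\in E_{\ell,q_2}\setminus E_\ell$ and its covering edge $S$, followed by switching ends in case~(b), is precisely what generates your ``main obstacle'', and it is unnecessary.

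The paper's proof simply takes, at the terminal long edge $E=E_{\ell,q_2}$, \emph{all} $q_1-1$ short covering edges $S_i$ (one for each $v_i\in E\setminus E_\ell$) simultaneously. If some $S_i$ has $S_i\cap V(P)=\{v_i\}$, maximality forces a spoiled extension (type~4), exactly as in your case~(a). Otherwise every $S_i$ meets $V(P)$ in at least two vertices: if some $\abs{S_i\cap V(P)}\ge 3$, that $S_i$ spoils $P$ (type~2); otherwise $\abs{S_i\cap V(P)}=2$ for every $i$, which is already the saw (type~3) once one relabels the path so that $E_{\ell,q_2}$ plays the role of $E_{1,1}$. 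No chord-tracking is needed, because the saw condition only demands $\abs{S_i\cap V(P)}=2$, not any control over where the second intersection lands. So the combinatorial bookkeeping you flag as the main obstacle simply does not arise. Your $\ell=0$ concern is also harmless: once non-simple covers are excluded, any covered short edge is already a block, so the longest simple path has length at least~$1$.
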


\begin{lemma}[The probabilistic lemma]
\label{lemma:asymmetric-probabilistic-lemma}
Let $H = H(n, q_1, q_2, p)$, let $c$ be a sufficiently small positive constant, and let $p=c \cdot n^{-\frac{q_2}{q_1(q_2-1)}}$.
Then a.a.s.\ $H$ contains no 2-blocking hypergraph.
\end{lemma}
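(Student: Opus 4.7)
The plan is to adapt the strategy of Lemma~\ref{lemma:probabilistic-lemma} to the asymmetric setting. For each of the four families of $2$-blocking hypergraphs, I would apply a first-moment argument to show that its expected count in $H(n,q_1,q_2,p)$ is $o(1)$, and then conclude via Markov's inequality and a union bound over the four families. The key preliminary facts are standard: the number of $q$-APs in $[n]$ is $\Theta(n^2)$, the number through a fixed vertex is $O(n)$, and the number through two fixed vertices is $O(1)$; and a subset of $[n]$ of size $k$ lies in $[n]_p$ with probability $p^k$.

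The central preliminary computation is the expected \emph{extension cost} of appending one block to a simple path: adding the new short edge together with its $q_2-1$ fresh covering long edges (the remaining long edge being shared with the previous block) uses $O(n^{q_2})$ configurations and introduces $q_1(q_2-1)$ new vertices, so the expected multiplicative factor per block is $O(n^{q_2}p^{q_1(q_2-1)}) = O(c^{q_1(q_2-1)})$, which can be made arbitrarily small by shrinking $c$. Together with the expected count $O(c^{q_1 q_2} n^{(q_2-2)/(q_2-1)})$ of a single block, this shows that the expected number of simple paths of length $\ell$ decays geometrically in $\ell$; so, for $B$ sufficiently large, a.a.s.\ no simple path of length $\geq B\log n$ exists, and in the remaining steps I would restrict to paths of length $\ell < B\log n$.

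For Type 1 (a short edge with non-simple cover), a non-simple coincidence forces at least two of the $q_2$ covering long edges to share an extra vertex outside $E$, which determines one of those covers from two points rather than one, saving a factor of $n$ in the count and one in the vertex count. The net factor is $(np)^{-1}=o(1)$ since $np\to\infty$ for $q_1>q_2$, which yields an expected count of $O(n^{-(q_1-q_2)/(q_1(q_2-1))})=o(1)$. For Type 3 (simple path with a saw), each of the $q_1-1$ saw edges is a short AP determined by two already-chosen vertices (its anchor in $E_{1,1}\setminus E_1$ and one other path vertex), contributing a factor of $O(V)\cdot p^{q_2-2}$ per saw edge; since $V=O(\log n)$ for paths of length $<B\log n$, this gives a polylogarithmic multiplier against a power saving of $n^{-\Omega(1)}$. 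For Type 4 (spoiled extension), the spoiled long edge in the appended cover hits an additional path vertex, saving a factor of $np=\omega(1)$ compared to a regular extension, so the expected count is $o(1)$ per path length. For Type 2 (spoiled simple path), I would mimic the symmetric-case argument by splitting into sub-cases according to whether the spoiling edge $E$ shares $\geq 2$ vertices with some path edge (in which case a non-simple-cover-style bound applies), or intersects each path edge in at most one vertex (in which case I fix the ``first three'' intersections $v_1,v_2,v_3$ of $E$ with $V(P)$ in the natural ordering on the path, and bound the expected number of simple sub-paths of lengths $t_1,t_2$ between those fixed pairs of vertices using the geometric decay from the extension-cost computation).

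The main obstacle is the case analysis for Type 2, in particular the sub-case where the spoiling edge $E$ is entirely (or almost entirely) contained in $V(P)$: such an $E$ contributes no new vertices, so any saving must come from the rigidity of the AP-structure that $E$ imposes on the path itself. This is handled via the ``first three intersection vertices'' decomposition, which forces the existence of two short sub-paths between the fixed pairs $(v_1,v_2)$ and $(v_2,v_3)$; each such sub-path contributes a $c_r^{t}$ factor that yields the required geometric decay after summing over $t_1,t_2$ and over the positions of $v_1,v_2,v_3$ in the path, and the fact that $c_r=O(c^{q_1(q_2-1)})$ is small makes these sums $o(1)$.
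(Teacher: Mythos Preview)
Your overall plan matches the paper's: bound simple-path lengths by a geometric extension-cost argument, then run a first-moment bound on each of the four families of $2$-blocking hypergraphs. However, two of your four case sketches have genuine gaps.

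\medskip
\textbf{Type 1 (non-simple cover).} Your saving of ``one factor of $n$ in the count and one vertex'' only treats the mildest non-simple cover, where two covering long edges share \emph{exactly one} extra vertex. If instead two covering edges $E_i,E_j$ share $s\ge 2$ vertices, then determining $E_j$ from two of its points still saves only a single factor of $n$, while you now lose $s$ vertices; the net multiplier is $n^{-1}p^{-s}$, and once $s>q_1(q_2-1)/q_2$ (which is strictly less than $q_1-1$, so is attainable) this diverges. More generally, several covering edges may overlap heavily, and the trade-off between count and vertex savings is delicate. The paper's proof of this case (its Lemma~\ref{lemma:only-simple-covers}) is by far the longest step: it classifies covering edges as simple/generic/degenerate, invokes a separate rigidity lemma (Lemma~\ref{lemma:single-edge-multicover}) to pin down pairs of APs through two fixed anchor points, and crucially relies on an arithmetic lower bound (Lemma~\ref{lemma:minimal-covers}) showing that any $r$ covering $q_1$-APs have union exceeding $2q_1(1-1/r)$. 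That last bound requires a genuine case analysis of the possible common differences $(d_1,d_2,d_3)$ and is not a routine counting fact; your sketch does not supply anything playing its role.

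\medskip
\textbf{Type 3 (saw).} Your per-saw-edge factor $O(\log n)\cdot p^{q_2-2}$ presumes each saw edge contributes $q_2-2$ \emph{new} vertices outside $V(P)$. But distinct saw edges may meet outside $V(P)$, so the total number of new vertices can be substantially smaller than $(q_1-1)(q_2-2)$, which weakens the $p$-exponent. The paper splits into several sub-cases (two saw edges sharing $\ge 2$ vertices; two saw edges meeting outside $V(P)$ with various anchor patterns; all saw edges disjoint outside $V(P)$) and in the worst sub-case only obtains $\abs{A}\ge(q_2-2)(q_1-1)/(q_2-1)$ new vertices via a pigeonhole on the anchors in $E_1$. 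Your sketch does not address this overlap possibility.

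\medskip
Your sketches for Types~2 and~4 are closer in spirit to the paper's (though the paper exploits the specific constraint $E\cap E_{\ell,q_2}=\{v\}$ in the spoiled-path case directly, rather than a first-three-intersections decomposition), but the two gaps above are the substantive ones.
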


Theorem \ref{thm:asymmetric-0-statement} follows immediately from these two lemmas.
Most of this section deals with proving the probabilistic lemma, using a first-moment argument over several random variables.
But first, we begin by proving the deterministic part of the theorem.
\begin{proof}[Proof of \nameref{lemma:asymmetric-deterministic-2-coloring}]
We say that a hypergraph is edge-critical if it is not asymmetrically-2-colorable, but any proper subhypergraph is. 
We may assume that $H$ is edge-critical; otherwise, we replace it with an edge-critical subhypergraph. We begin by showing that every edge critical hypergraph is covered. 
\begin{claim}\label{claim:asym-deterministic-lemma}
If $H$ is an edge-critical hypergraph, then for every edge $E \in H$ and for every vertex $v \in E$ there is an edge $E' \in H$ such that $E \cap E' = \{v\}$ and $\abs{E} \neq \abs{E'}$; in other words, $H$ is covered.
\end{claim}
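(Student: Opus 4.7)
The plan is to mimic the symmetric-case argument used inside the proof of Lemma~\ref{lemma:deterministic-2-coloring}, adapted to the asymmetric two-color setting. Assume for contradiction that there is an edge $E \in H$ and a vertex $v \in E$ such that every edge $E'$ with $\abs{E'} \ne \abs{E}$ that contains $v$ satisfies $\abs{E \cap E'} \ge 2$. Since $H$ is edge-critical, the hypergraph $H \setminus \{E\}$ admits an asymmetric 2-coloring $\chi$. Extend $\chi$ to $H$; because $H$ itself has no asymmetric 2-coloring, the only ``violating'' edge under $\chi$ must be $E$ itself. Concretely, if $E$ is short (size $q_2$) it is monochromatically blue, and if $E$ is long (size $q_1$) it is monochromatically red.

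I would split into the two cases and, in each, recolor the single vertex $v$.

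\emph{Case 1: $\abs{E}=q_2$, and $E$ is all blue.} Recolor $v$ from blue to red. Then $E$ is no longer all blue, so $E$ is fixed. No other short edge becomes monochromatic blue, since the only such edge under $\chi$ was $E$. The only possible new violation is a long edge $E'$ containing $v$ that is now all red; this requires $v$ to have been the unique blue vertex of $E'$ under $\chi$. By our assumption, every such long $E'$ meets $E$ in some vertex $u \ne v$; since all of $E$ was blue, $u$ is blue, so $E'$ retains the blue vertex $u$ and cannot become all red. Thus the recolored $\chi$ is an asymmetric 2-coloring of $H$, a contradiction.

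\emph{Case 2: $\abs{E}=q_1$, and $E$ is all red.} Recolor $v$ from red to blue. Then $E$ is no longer all red, and no other long edge was all red under $\chi$. The only possible new violation is a short edge $E'$ containing $v$ becoming all blue; this would require $v$ to have been the unique red vertex of $E'$ under $\chi$. By assumption every short $E'$ through $v$ shares a second vertex $u \in E$ with $E$; since all of $E$ is red, $u$ is red, so $E'$ still has a red vertex and cannot become all blue. Again we obtain an asymmetric 2-coloring of $H$, contradicting non-colorability. Hence such an $E'$ of the opposite cardinality with $E \cap E' = \{v\}$ must exist.

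The main obstacle here is really just bookkeeping: making sure that flipping a single vertex's color cannot create a violation of the other type (long red / short blue), and this is exactly where the hypothesis $\abs{E \cap E'} \ge 2$ gets used to guarantee a ``witness'' vertex of the correct original color inside each potentially dangerous opposite-cardinality edge. No supersaturation, probabilistic input, or container input is needed — the argument is purely a local recoloring argument driven by edge-criticality.
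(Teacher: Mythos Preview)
Your proposal is correct and follows essentially the same approach as the paper: assume no opposite-cardinality edge meets $E$ only in $v$, use edge-criticality to get a coloring of $H$ in which only $E$ is bad, then flip the color of $v$ and use the assumption to rule out any new violation. The paper merely compresses your two cases into one via ``without loss of generality,'' but the argument is identical.
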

\begin{proof}
Let $H$ be edge-critical, and suppose that there are an edge $E \in H$ and a vertex $v \in E$, such that every edge $E'$ of the other cardinality that contains $v$ also contains another vertex of $E$.
By the edge-criticality, $H$ can be colored red/blue in such a way that only $E$ violates the coloring condition. Without loss of generality, assume $E$ is long (and colored red). Now, by changing the color of $v$ to blue, $E$ no longer violates the coloring condition, and neither does any short edge that contains $v$, contradicting the fact that $H$ is not asymmetrically-2-colorable.
\end{proof}
If there exists a short edge with a non-simple cover, then we are done, so we may assume all short edges have simple covers.
Let $P$ be a longest simple path, and let $\ell \ge 1$ be its length.
We observe $E = E_{\ell,q_2}$; since $E$ is covered by short edges, we have short edges $S_i$ such that $S_i \cap E = \{v_i\}$ for every $v_i \in \{v_2, \dotsc, v_{q_1}\} =  E \setminus E_\ell$.  \par
Suppose first that there exists $S_i$ such that $S_i \cap V(P) = \{v_i\}$ and observe $\{s_2, \dotsc, s_{q_2}\} = S_i \setminus E$.
By the previous claim, we have a long covering edge for every $s_i$, and by the maximality of $P$ we obtain that for every simple cover for $S_i$ there exists a covering edge $L$ that intersects $P$. 
Thus, we obtain that $S_i$ forms a spoiled extension to $P$. \par
If, on the other hand, no $S_i$ intersects $P$ in exactly a single vertex, we obtain one of two cases:
If there exists some $i$ such that $\abs{S_i \cap V(P)} \ge 3$, then since $\abs{S_i \cap E} = 1$ we obtain that $S_i$ is a spoiling edge for $P$.
Otherwise, we have $\abs{S_i \cap V(P)} = 2$ for all $i$ and thus obtain a path with a saw, completing the proof.
\end{proof}

The rest of this section deals with proving the probabilistic portion of Theorem~\ref{thm:asymmetric-0-statement}. We will begin by proving upper bounds on the number of copies of several hypergraphs in $H(n,q_1,q_2)$ and showing that a.a.s.\ all short edges have simple covers.
We will then prove an upper bound on the number of simple paths of arbitrary lengths. 
Finally, with the above results in hand, we will turn to prove Lemma~\ref{lemma:asymmetric-probabilistic-lemma}.

\begin{lemma}\label{lemma:single-edge-multicover}
    Let $x,y \in [n]$ be distinct integers. Then, the number of choices for $a,b \in [n]$ such that there are $q$-APs that contain $\{x,a,b\}$ and $\{y,a,b\}$ is $O(1)$.
\end{lemma}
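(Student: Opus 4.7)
The plan is to reduce this to an elementary incidence count in the plane. First I would fix a triple of positions $(i_x, i_a, i_b) \in \{0, 1, \dots, q-1\}^3$ with $i_x, i_a, i_b$ distinct, prescribing the positions of $x, a, b$ inside a hypothetical $q$-AP. For each such choice, parametrizing by the common difference $d$, the pairs $(a, b)$ for which some $q$-AP realizes this configuration are exactly
\[
    (a, b) = \bigl(x + (i_a - i_x)d,\ x + (i_b - i_x)d\bigr), \qquad d \in \mathbb{Z},
\]
which traces out a line in $\mathbb{Z}^2$ through $(x, x)$ with direction vector $(i_a - i_x,\ i_b - i_x)$. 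Taking the union over the $O(q^3)$ position triples with $i_a \ne i_b$, the set of pairs $(a, b)$ with $a \ne b$ and $\{x, a, b\}$ contained in some $q$-AP lies on a union of $O(q^3)$ lines through $(x, x)$, none of which is the diagonal $\{a = b\}$. The same analysis for $y$ yields another family of $O(q^3)$ non-diagonal lines through $(y, y)$.

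The key observation is that when $x \ne y$, no line from the first family can coincide with any line from the second. Indeed, $(y, y) - (x, x) = (y - x)(1, 1)$ is a nonzero multiple of $(1, 1)$, so a line through $(x, x)$ passes through $(y, y)$ only if its direction is proportional to $(1, 1)$; but every line in our two families is non-diagonal by construction. Hence each of the $O(q^3) \cdot O(q^3) = O(q^6)$ pairs of lines (one from each family) consists of two distinct lines and so meets in at most one point. Every valid $(a, b)$ lies in such an intersection, giving at most $O(q^6) = O(1)$ candidate pairs.

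The only subtlety worth noting is the tacit assumption $a \ne b$: if $a = b$ were allowed, then any element $a$ lying simultaneously in a $q$-AP with $x$ and in a $q$-AP with $y$ would work, yielding $\Theta(n)$ solutions. So the lemma must be read under the convention that $\{x, a, b\}$ denotes a set of three distinct elements, which is the natural interpretation in the applications. Beyond this bookkeeping, I do not foresee any substantive obstacle; the argument is a one-step reduction to linear algebra over $\mathbb{Z}^2$.
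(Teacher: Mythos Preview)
Your proof is correct and follows essentially the same approach as the paper's: both reduce to the observation that for each fixed configuration of positions (equivalently, the paper's rational parameters $t_1,t_2$ with $x=(1-t_1)a+t_1b$ and $y=(1-t_2)a+t_2b$), the pair $(a,b)$ is determined uniquely by a nonsingular $2\times 2$ linear system, nonsingularity being exactly your geometric statement that a non-diagonal line through $(x,x)$ cannot coincide with one through $(y,y)$ when $x\ne y$. Your remark about the implicit assumption $a\ne b$ is apt and applies equally to the paper's argument.
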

\begin{proof}
    Suppose that $a,b$ are contained in a $q$-AP along with $x$, then there exists $t_1 \in \mathbb{Q}$ such that $x-a = t_1(b-a)$, thus, $x = (1 - t_1)a + t_1b$.
    Moreover, $t_1 = \frac{r_1}{r_2}$ for $r_1, r_2 \in \{-q, -q+1,\dotsc, q\}$.
    The same also holds for $y$ with another constant $t_2$.
    We obtain the following system of linear equations:
\[
    \begin{pmatrix} 1-t_1&t_1 \\ 1-t_2& t_2 \end{pmatrix} \begin{pmatrix} a \\ b \end{pmatrix} = \begin{pmatrix} x \\ y \end{pmatrix}. \tag{$\star$} \label{eq:mc}
\]
Since the determinant of the above matrix is $t_2 - t_1$, we obtain that as long as $t_1 \ne t_2$ there is only a single solution to $\eqref{eq:mc}$.
Since $x \ne y$ implies $t_1 \ne t_2$ and there are at most $O(q^2) = O(1)$ choices for $t_1$ and $t_2$, there are only $O(1)$ many choices for $a,b$.
\end{proof}

In Lemma \ref{lemma:minimal-covers} we show that a short edge with its $q_2$ covering long edges must contain almost $2q_1$ vertices. We will then use this lemma to show that non-simple covers are unlikely. We will require the following two elementary lemmas.

\begin{lemma}\label{lemma:AP-intersection}
    Let $E_1, E_2$ be two arithmetic progressions of length $q_1$, with common differences $d_1$ and $d_2$. 
    If $d_1 < d_2$, then $\abs{E_1 \cap E_2} \le \ceil{q_1 \cdot \frac{\gcd(d_1,d_2)}{d_2}}$.
\end{lemma}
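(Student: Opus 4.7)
The plan is to use the elementary fact that the set-theoretic intersection of two arithmetic progressions in $\mathbb{Z}$ is itself an arithmetic progression (possibly empty or a single point), and if nonempty, its common difference is $\lcm(d_1,d_2)$. Once this is established, the bound is just a matter of computing how many terms of such an AP can fit inside $E_1$.

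First I would observe that if $E_1 \cap E_2 = \emptyset$ or $\abs{E_1 \cap E_2} = 1$ the bound is trivial (since $\lceil q_1 \cdot \gcd(d_1,d_2)/d_2 \rceil \ge 1$ whenever $q_1 \ge 1$). So assume $\abs{E_1 \cap E_2} = k \ge 2$. Take two consecutive elements $x < y$ of the intersection; then $y - x$ is a positive multiple of both $d_1$ and $d_2$, hence of $\lcm(d_1,d_2)$. Moreover, $y-x$ must be minimal, so in fact $y - x = \lcm(d_1,d_2)$. Iterating, the $k$ intersection elements form an AP with common difference $\lcm(d_1,d_2)$, and in particular span a range of $(k-1)\lcm(d_1,d_2)$.

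Since $d_1 < d_2$, the AP $E_1$ has the smaller total range $(q_1-1)d_1$, so the intersection must fit inside it:
\[
(k-1)\lcm(d_1,d_2) \le (q_1-1)d_1.
\]
Using $\lcm(d_1,d_2) = d_1 d_2 / \gcd(d_1,d_2)$, this rearranges to
\[
k \le (q_1-1)\cdot \frac{\gcd(d_1,d_2)}{d_2} + 1.
\]

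The only step with any subtlety is showing that this right-hand side is bounded above by $\lceil q_1 \cdot \gcd(d_1,d_2)/d_2 \rceil$. Since $d_2$ is an integer multiple of $\gcd(d_1,d_2)$, write $d_2 = m \cdot \gcd(d_1,d_2)$ for some integer $m \ge 2$ (using $d_1 < d_2$). Then the bound becomes $k \le (q_1-1)/m + 1$, and since $k \in \mathbb{Z}$,
\[
k \le \left\lfloor \frac{q_1-1}{m} \right\rfloor + 1 = \left\lceil \frac{q_1}{m} \right\rceil,
\]
the last identity being a routine case split on whether $m \mid q_1$. Substituting back gives exactly $k \le \lceil q_1 \cdot \gcd(d_1,d_2)/d_2 \rceil$, which completes the proof. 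The main thing to get right is simply the floor/ceiling bookkeeping at the end; the geometric content is just that intersections of APs are APs with common difference $\lcm$.
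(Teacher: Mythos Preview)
Your proof is correct and follows essentially the same approach as the paper: both arguments rest on the fact that $E_1 \cap E_2$ lies inside an arithmetic progression of common difference $\lcm(d_1,d_2)$, and then count how many such terms fit in the span $(q_1-1)d_1$ of $E_1$. Your write-up is in fact more careful than the paper's about the final floor/ceiling identity $\lfloor (q_1-1)/m \rfloor + 1 = \lceil q_1/m \rceil$; the one step you leave slightly implicit is why consecutive intersection points are exactly $\lcm(d_1,d_2)$ apart (if $y-x>\lcm(d_1,d_2)$ then $x+\lcm(d_1,d_2)$ lies strictly between $x$ and $y$ in both finite progressions), but the inequality $(k-1)\lcm(d_1,d_2)\le (q_1-1)d_1$ that you actually use only needs the lower bound, which you do justify.
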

\begin{proof}
    Let $A_1$ and $A_2$ be the infinite arithmetic progressions containing $E_1$ and $E_2$ respectively.
    We obtain that $A = A_1 \cap A_2$ is either empty or an infinite arithmetic progression with common difference $\lcm(d_1,d_2)$, thus $A \cap A_1$ contains every $\frac{\lcm(d_1,d_2)}{d_1}$-th element of $A_1$.
    Therefore, a subsequence of length $q_1$ in $A_1$ contains at most $\lceil q_1 \cdot \frac{d_1}{\lcm(d_1,d_2)} \rceil$ elements of $A$, and the result follows immediately.
\end{proof}

\begin{lemma}\label{lemma:AP-modulo}
    Let $n > m >0$ be integers, and $A = (a_1, a_2, \dotsc, a_q)$ be an arithmetic progression of length $q > 3$ with common difference $m$.
    If we denote $t = \frac{n}{\gcd(n,m)}$, then $\abs{\{a \in [n] : \exists i \in [q] \text{ such that } a \equiv a_i \mod n\}} = \min\{t,q\}$.
    Moreover, $a_i \equiv a_{i+kt} \mod n$ for all integers $i$ and $k$ such that $i, i + kt \in [q]$.
\end{lemma}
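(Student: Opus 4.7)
The plan is to analyze everything through the difference formula $a_j - a_i = (j-i)m$ and reduce both claims to the single fact that $n \mid (j-i)m$ if and only if $t \mid j - i$.

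First I would prove the ``moreover'' statement. Writing $a_{i+kt} - a_i = ktm$, it suffices to check that $n \mid tm$. But $tm = nm/\gcd(n,m) = \lcm(n,m)$, which is trivially a multiple of $n$. Hence $a_{i+kt} \equiv a_i \pmod n$ whenever both indices lie in $[q]$.

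For the cardinality claim I would establish the converse: within any window of fewer than $t$ consecutive indices, the residues are pairwise distinct. Fix $1 \le i < j \le q$ with $j - i < t$. Setting $d = \gcd(n,m)$ and dividing by $d$, the congruence $n \mid (j-i)m$ becomes $(n/d) \mid (j-i)(m/d)$; since $n/d = t$ and $m/d$ is coprime to $t$, this forces $t \mid j-i$, contradicting $0 < j - i < t$. So $a_i \not\equiv a_j \pmod n$ in this range.

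Combining the two directions, the sequence of residues $(a_i \bmod n)_{i=1}^q$ has minimal period exactly $t$. If $q \le t$ then every pair of indices $1 \le i < j \le q$ satisfies $j - i < t$, so all $q$ residues are distinct; if $q > t$ then the residues cycle through exactly the $t$ distinct values $a_1, \dotsc, a_t \pmod n$. In either case the count is $\min\{t, q\}$. The argument is elementary number theory and presents no real obstacle; the only subtlety is invoking coprimality of $n/d$ and $m/d$ to convert the divisibility condition into the clean statement $t \mid j-i$.
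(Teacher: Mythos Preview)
Your proof is correct and is essentially the same argument as the paper's, just phrased in elementary divisibility language rather than group-theoretic terms: the paper observes that the order of $m$ in $\mathbb{Z}/n\mathbb{Z}$ is $t$ and that the residues of $A$ lie in the coset $a_1 + \langle m \rangle$ of size $t$, which is exactly your statement that $n \mid (j-i)m$ iff $t \mid j-i$. If anything, your version spells out both directions more explicitly than the paper does.
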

\begin{proof}
Let $G = \mathbb{Z}/n\mathbb{Z}$ be the additive cyclic group of order $n$. 
From elementary group theory we know that the order of $m$ in $G$ is $t = \frac{n}{\gcd(n,m)}$. Let $G'$ be the cyclic subgroup generated by $m$.
Then, the residues of $A$ modulo $n$ are contained in the coset $a_1 + G'$, which has $\abs{a_1 + G'} = \abs{G'} = t$, completing the proof.
\end{proof}

\begin{lemma}\label{lemma:minimal-covers}
    If $E$ is a short edge and $E_1,\dotsc, E_{r}$ are a subset of its covering edges, then 
    \[\abs{E_1 \cup \dotsb \cup E_{r}} > 2q_1\left(1-\frac{1}{r}\right).\]
\end{lemma}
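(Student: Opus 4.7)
The plan is to reduce the problem to bounding the intersection of just two of the covering edges, namely those anchored at the two farthest-apart positions within $E$. Each $E_i$ intersects $E$ in a distinct element $a_i$, and since $E$ is a $q_2$-AP with common difference $d$, after relabeling so that the anchors satisfy $a_{\pi(1)} < a_{\pi(2)} < \dotsb < a_{\pi(r)}$, we have $a_{\pi(r)} - a_{\pi(1)} \ge (r-1)d$ because $r$ distinct indices in $[q_2]$ span at least $r-1$ steps. Hence it suffices to show $|E_{\pi(1)} \cap E_{\pi(r)}| < 2q_1/r$, since this gives
\[
\abs{E_1 \cup \dotsb \cup E_r} \ge \abs{E_{\pi(1)} \cup E_{\pi(r)}} = 2q_1 - \abs{E_{\pi(1)} \cap E_{\pi(r)}} > 2q_1\left(1 - \frac{1}{r}\right).
\]

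Writing $d_1 = d_{\pi(1)}$ and $d_r = d_{\pi(r)}$ for the common differences, the argument splits on whether $d_1 = d_r$. If $d_1 \neq d_r$, I would invoke Lemma~\ref{lemma:AP-intersection} to obtain $|E_{\pi(1)} \cap E_{\pi(r)}| \le \lceil q_1 \gcd(d_1, d_r)/\max(d_1, d_r) \rceil$. This is small precisely when the common differences differ by a large ratio. To force this, I would use the geometric constraint that the ranges of $E_{\pi(1)}$ and $E_{\pi(r)}$, of lengths $(q_1-1)d_1$ and $(q_1-1)d_r$, must together span the separation $(r-1)d$ of their anchors, giving $(q_1-1)(d_1+d_r) \ge (r-1)d$ and so pushing $\max(d_1, d_r)$ to be large enough relative to the relevant gcd.

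If $d_1 = d_r = d'$, then either $E_{\pi(1)}$ and $E_{\pi(r)}$ lie on different infinite APs of common difference $d'$ (in which case they are disjoint), or they lie on the same infinite AP. In the latter case, I would parameterize both edges as length-$q_1$ windows on this infinite AP and use the constraint that $E_{\pi(1)}$ contains $a_{\pi(1)}$ but none of $a_{\pi(2)}, \dotsc, a_{\pi(r)}$ (and symmetrically for $E_{\pi(r)}$). A direct positional analysis then shows that the anchor position of $a_{\pi(1)}$ must be crowded near one end of $E_{\pi(1)}$ and $a_{\pi(r)}$ near the opposite end of $E_{\pi(r)}$, so the two windows are separated by roughly $(a_{\pi(r)} - a_{\pi(1)})/d' \ge (r-1)d/d'$ positions on the shared infinite AP; this forces a small (often empty) overlap.

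The main obstacle is turning these qualitative constraints into the precise inequality $|E_{\pi(1)} \cap E_{\pi(r)}| < 2q_1/r$. In the $d_1 \neq d_r$ case this requires carefully balancing the number-theoretic bound from Lemma~\ref{lemma:AP-intersection} with the range-span inequality, particularly when $d_1$ and $d_r$ are comparable in size and share a large gcd. In the $d_1 = d_r$ case, the delicate step is ruling out the intermediate regime where $d'$ is large enough that the anchor positions may be arbitrary in their windows yet small enough that the windows can still overlap substantially; here one exploits that each $a_{\pi(j)}$ (for $1 < j < r$) must also be avoided by both $E_{\pi(1)}$ and $E_{\pi(r)}$, which further rigidifies the positions.
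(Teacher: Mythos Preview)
Your reduction to two edges is not just difficult to complete---it is false. The inequality $|E_{\pi(1)} \cap E_{\pi(r)}| < 2q_1/r$ can fail. Take $q_2 = r = 5$, $E = \{0,1,2,3,4\}$, and set
\[
E_1 = \{-4(q_1-1), \dotsc, -8, -4, 0\}\quad (\text{difference }4),\qquad
E_5 = \{4-8(q_1-1), \dotsc, -12, -4, 4\}\quad (\text{difference }8).
\]
Then $E_1 \cap E = \{0\}$ and $E_5 \cap E = \{4\}$, so both are legitimate covering edges, yet $E_1 \cap E_5 = \{-4, -12, \dotsc\}$ has $\lfloor q_1/2 \rfloor$ elements, which exceeds $2q_1/5$ once $q_1 \ge 5$. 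The geometric/range argument you sketch cannot rescue this: nothing forces $d_1$ and $d_r$ to be large relative to $d$, so the ``span'' inequality $(q_1-1)(d_1+d_r) \ge (r-1)d$ is vacuous here (it reads $12(q_1-1) \ge 4$), and the anchors impose no further separation because neither $E_1$ nor $E_5$ comes near the intermediate elements $1,2,3$ at all.

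The upshot is that two covering edges can overlap in roughly $q_1/2$ vertices regardless of how far apart their anchors sit in $E$, so the union of all $r$ edges cannot be controlled by a single pair. The paper's proof proceeds quite differently: it argues that no \emph{three} covering edges can have union smaller than $2q_1 - 4$, via a case analysis on the possible ratios of their common differences (using Lemma~\ref{lemma:AP-intersection} to force $d_i/d_j \in \{2,3\}$ for large pairwise overlaps, and then checking the handful of resulting triples $(d_1,d_2,d_3)$ by residue-class counting). Combined with the easy observation that at most two covering edges can share a common difference, this yields the bound for all $r \ge 5$; the cases $r \le 4$ are handled separately. You will need to bring the middle edges $E_{\pi(2)},\dotsc,E_{\pi(r-1)}$ into the argument in some essential way.
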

\begin{proof}
    Denote $M = \abs{E_1 \cup \dotsb \cup E_{r}}$ and assume for contradiction that $M \le 2q_1(1 - \frac{1}{r})$.
    The case $r \in \{1,2\}$ is trivial, therefore we may assume $r \ge 3$.
    We first show that no three covering edges share the same common difference.
    \begin{claim}
        Let $E_1, E_2, E_3$ be covering edges, and assume that all three $q_1$-APs have the same common difference, i.e.\ $E_z = \{a_z + i \cdot d : i \in [q_1]\}$.
        Then, \[\abs{E_1 \cup E_2 \cup E_3} \ge 2q_1.\]
    \end{claim}
    \begin{proof}
        Suppose $\{v_z\} = E_z \cap E$ for $z \in \{1,2,3\}$.
        Without loss of generality assume that $v_1 < v_2 < v_3$, and that $E_z \cap E_2 \ne \emptyset$ for $z=1,3$, as otherwise $\abs{E_z \cup E_2} = 2q_1$.
        Since each covering edge may only contain one vertex of $E$, and $E_1$ lies on the same infinite arithmetic progression of difference $d$ as $E_2$,  we deduce that all the elements of $E_1$ must be strictly smaller than $v_2$. Similarly, all elements of $E_3$ must be strictly larger than $v_2$.
        Hence, $E_1 \cap E_3 = \emptyset$; and thus, $\abs{E_1 \cup E_3} = 2q_1$.
    \end{proof}
    We will now show that $r \ge 5$.
    Since $r \ge 3$, the previous claim implies that there exists a pair of edges with different common differences; without loss of generality we denote them $E_1, E_2$.
    We note that $\abs{E_1 \cap E_2} \le \lceil\frac{q_1}{2}\rceil$, by Lemma \ref{lemma:AP-intersection}. 
    Let $E' = E \cap (E_1 \cup \dotsb \cup E_r)$.
    Since $E_1,E_2$ are covering edges of $E$, we have 
    \begin{align*}
        \abs{E' \cup E_1 \cup E_2} = \abs{E_1 \cup E_2} + \abs{E'} - 2 &= \abs{E_1} + \abs{E_2} - \abs{E_1 \cap E_2} + \abs{E'} - 2 \\&\ge q_1 + \floor*{\frac{q_1}{2}} + r - 2.
    \end{align*}
    Thus, if $r \in  \{3,4\}$ we observe that (since $\lfloor \frac{q_1}{2} \rfloor \ge \frac{q_1}{2} - \frac{1}{2}$)
    \[
        q_1 + \floor*{\frac{q_1}{2}}+ r - 2 \ge
        \frac{3}{2}q_1 + r - \frac{5}{2} \ge 
        \left(1-\frac{1}{r}\right)2q_1 + r - \frac{5}{2} >
        2q_1\left(1 - \frac{1}{r}\right)  
    \] for all $q_1 > r$.
    Finally, we will show that no three arithmetic progressions may have pairwise different common differences.
    \begin{claim}
        Let $E_1, E_2, E_3$ be covering edges, with distinct common differences $d_1, d_2, d_3$ respectively. 
        Then 
        \[\abs{E_1 \cup E_2 \cup E_3} \ge 2q_1 - 4,\]
        and thus $M \ge \abs{E_1 \cup E_2 \cup E_3} +(r-3)  \ge 2q_1 - 2> 2q_1(1-\frac{1}{r})$.
    \end{claim}
    \begin{proof}
        Assume for contradiction that $\abs{E_1 \cup E_2 \cup E_3} < 2q_1 - 4$.
        Since \[\abs{E_1 \cup E_2 \cup E_3} \ge 3q_1 - \abs{E_1 \cap E_2} - \abs{E_1 \cap E_3} - \abs{E_2 \cap E_3},\] at least one pair of edges intersects in more than $\frac{q_1}{3} + 1$ elements. \\
        Recall that $q_1 > q_2 \ge r \ge 5$. 
        Note that $\frac{q_1}{3} + 1 >  \lceil \frac{q_1}{3} \rceil \ge \lceil\frac{q_1}{d}\rceil$ for all $d \ge 3$ and $q_1 \ge 6$.
        Thus, by Lemma \ref{lemma:AP-intersection}, we have that for some $i,j \in [3]$  such that $d_i > d_j$ we have $\gcd(d_i,d_j) = \frac{d_i}{2}$ which can only occur if $d_i = 2d_j$.
    Let $k \notin \{i,j\}$. Since 
    \[\abs{E_i \cup E_j \cup E_k} \ge 3q_1 - \ceil*{\frac{q_1}{2}} - \abs{E_i \cap E_k} - \abs{E_j \cap E_k} \ge 3q_1 - \frac{q_1}{2} - 1 - \abs{E_i \cap E_k} - \abs{E_j \cap E_k}\]
    we obtain that $E_k$ must intersect one of the other edges in more than $\frac{q_1}{4} + 1$ elements. 
    Note that $\frac{q_1}{4} + 1 >  \lceil \frac{q_1}{4} \rceil \ge \lceil\frac{q_1}{d}\rceil$ for all $d \ge 4$ and $q_1 \ge 6$.
    Again, by Lemma \ref{lemma:AP-intersection}, we have that $t \cdot \gcd(d_k, d_z) = \max(d_k,d_z)$ for some $z \in \{i,j\}$ and $t \in \{2,3\}$. \par
    If we set $d_j = d$, we get that $\frac{d_k}{d}$ may obtain one of six values: either $\frac{1}{2}$ or $4$ for $t = 2$ or one of $\frac{1}{3}, \frac{2}{3}, 3, 6$ for $t=3$.
    We note that by Lemma \ref{lemma:AP-modulo}, if we observe the residues modulo $d'$ of an arithmetic progression of length $q_1$ with common difference $d < d'$, we obtain that the progression cycles through $\frac{d'}{\gcd(d,d')}$ residues; and thus, it contains at least $\left\lfloor\frac{q_1 \cdot \gcd(d,d')}{d'}\right\rfloor$ elements of every residue class it encounters.
    By adjusting the constant $d$ and permuting the indices we obtain that the triplet $(d_1, d_2, d_3)$ must fall into one of five categories:
        \begin{enumerate}
            \item Assuming $(d_1, d_2, d_3) = (d, 2d, 4d)$, without loss of generality we assume $0 \in E_3$ and $d=1$.
                Then, $E_3$ consists of $q_1$ elements which satisfy $a \equiv 0 \mod 4$.
                If $E_2$ contains elements which satisfy $a \equiv 1,3 \mod 4$, we obtain that $\abs{E_3 \cup E_2} = 2q_1$. \\
                Otherwise, we obtain that at least $\lfloor \frac{q_1}{2} \rfloor$ of the elements in $E_2$ satisify $a \equiv 2 \mod 4$, and at least $2\cdot\lfloor \frac{q_1}{4} \rfloor$ of the elements in $E_1$ satisfy $a \equiv 1,3 \mod 4$.
                Thus, \[\abs{E_1 \cup E_2 \cup E_3} \ge q_1 + \lfloor \frac{q_1}{2} \rfloor + 2\cdot\lfloor \frac{q_1}{4} \rfloor \ge 2q_1 - 2.\]
            \item Assuming $(d_1, d_2, d_3) = (d, 2d, 3d)$, without loss of generality we assume $0 \in E_3$ and $d=1$.
                Then, $E_3$ consists of $q_1$ elements which satisfy $a \equiv 0, 3 \mod 6$.
                Since at least $2\lfloor \frac{q_1}{3} \rfloor$ of the elements in $E_2$ satisify $a \equiv 2,4 \mod 6$ (or equivalently $1,5 \mod 6$), and at least $2\lfloor \frac{q_1}{6} \rfloor$ of the elements in $E_1$ satisfy $a \equiv 1,5 \mod 6$ (equivalently $2,4 \mod 6$), we obtain
                \[\abs{E_1 \cup E_2 \cup E_3} \ge q_1 + 2\lfloor \frac{q_1}{3} \rfloor + 2\lfloor \frac{q_1}{6} \rfloor \ge 2q_1 - 3.\]

            \item Assuming $(d_1, d_2, d_3) = (d, 2d, 6d)$, without loss of generality we assume $0 \in E_3$ and $d=1$.
                Then, $E_3$ consists of $q_1$ elements which satisfy $a \equiv 0 \mod 6$.
                If $E_2$ contains elements which satisfy $a \equiv 1,3,5 \mod 6$ we obtain that $\abs{E_3 \cup E_2} = 2q_1$. \\
                Otherwise, at least $2\lfloor \frac{q_1}{3} \rfloor$ of the elements in $E_2$ satisify $a \equiv 2,4 \mod 6$, and at least $3\lfloor \frac{q_1}{6} \rfloor$ of the elements in $E_1$ satisfy $a \equiv 1,3,5 \mod 6$. 
                Hence, \[\abs{E_1 \cup E_2 \cup E_3} \ge q_1 + 2\lfloor \frac{q_1}{3} \rfloor + 3\lfloor \frac{q_1}{6} \rfloor \ge 2q_1 - 4.\]
            \item Assuming $(d_1, d_2, d_3) = (d, 3d, 6d)$, without loss of generality we assume $0 \in E_3$ and $d=1$.
                Then, $E_3$ consists of $q_1$ elements which satisfy $a \equiv 0 \mod 6$.
                If $E_2$ contains elements which satisfy $a \equiv 1,4 \mod 6$ or $a \equiv 2,5 \mod 6$ we obtain that $\abs{E_3 \cup E_2} = 2q_1$. \\
                Otherwise, at least $\lfloor \frac{q_1}{2} \rfloor$ of the elements in $E_2$ satisify $a \equiv 3 \mod 6$, and at least $4\lfloor \frac{q_1}{6} \rfloor$ of the elements in $E_1$ satisfy $a \equiv 1,2,4,5 \mod 6$. 
                Therefore, \[\abs{E_1 \cup E_2 \cup E_3} \ge q_1 + 4\lfloor \frac{q_1}{6} \rfloor + \lfloor \frac{q_1}{2} \rfloor  \ge 2q_1 - 4.\]
            \item Assuming $(d_1, d_2, d_3) = (2d, 3d, 6d)$, without loss of generality we assume $0 \in E_3$ and $d=1$.
                Then, $E_3$ consists of $q_1$ elements which satisfy $a \equiv 0 \mod 6$.
                If $E_2$ contains elements which satisfy $a \equiv 1,4 \mod 6$ or $a \equiv 2,5 \mod 6$ we obtain that $\abs{E_3 \cup E_2} = 2q_1$. \\
                If $E_1$ contains elements which satisfy $a \equiv 1,3,5 \mod 6$ we obtain that $\abs{E_3 \cup E_1} = 2q_1$. \\
                Otherwise, at least $\lfloor \frac{q_1}{2} \rfloor$ of the elements in $E_2$ satisify $a \equiv 3 \mod 6$, and at least $2\lfloor \frac{q_1}{3} \rfloor$ of the elements in $E_1$ satisfy $a \equiv 2,4 \mod 6$.
                Thus, \[\abs{E_1 \cup E_2 \cup E_3} \ge q_1 + 2\lfloor \frac{q_1}{3} \rfloor + \lfloor \frac{q_1}{2} \rfloor  \ge 2q_1 - 2.\]
        \end{enumerate}
        \end{proof}
        Finally we obtain that $r \ge 5$, no three edges may share a common difference, and there can be no more than two possible common differences which yields a contradiction.
    \end{proof}

\begin{lemma}\label{lemma:only-simple-covers}
    Let $H'$ be a hypergraph consisting of a short edge and its non-simple cover. If $p = O(n^{-\frac{q_2}{q_1(q_2-1)}})$, then $\mathbb{P}(H' \subset H(n,q_1,q_2,p)) = o(1)$.
\end{lemma}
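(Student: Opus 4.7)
The plan is a first-moment argument. Since the number of isomorphism types of a short edge together with a non-simple cover by $q_2$ long edges is finite once $q_1$ and $q_2$ are fixed, it suffices to show that for each such type $H'$ the expected number of copies of $H'$ in $H(n,q_1,q_2,p)$ is $o(1)$; Markov's inequality and a union bound over types then complete the proof.

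To bound the number of labelled copies of a fixed $H'$ (with $M = |V(H')|$ vertices) in $H(n,q_1,q_2)$, I would build $H'$ up edge by edge. The short edge $E$ is a $q_2$-AP, contributing $O(n^2)$ placements. Order the $q_2$ covering edges so that each \emph{attached} edge (one that shares at least one extra vertex with a previously placed edge, beyond its unique vertex $a_i \in E$) appears later. A \emph{free} covering edge is a $q_1$-AP through the single known vertex $a_i$, contributing a factor of $O(n)$; an attached edge is a $q_1$-AP through two or more already-known vertices, contributing only $O(1)$. If $k \ge 1$ of the $q_2$ covering edges are attached with single-vertex overlaps forming a tree, then the count is $O(n^{q_2-k+2})$ while $M = q_1 q_2 - k$. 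Substituting $p = O(n^{-q_2/(q_1(q_2-1))})$ gives an $n$-exponent equal to
\[
\frac{q_2-2}{q_2-1} - k \cdot \frac{q_1 q_2 - q_1 - q_2}{q_1(q_2-1)},
\]
which is strictly negative whenever $k \ge 1$ and $q_1 > q_2$; the borderline case $k=1$ already yields exponent $-(q_1-q_2)/[q_1(q_2-1)] < 0$.

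The main obstacle is the situation where covering edges share two or more vertices outside $E$, in which case a naive DOF count would lose a factor of $n$. This is exactly what Lemma \ref{lemma:single-edge-multicover} is tailored for: given two distinct $x,y \in [n]$, there are only $O(1)$ pairs $(a,b)$ with $\{x,a,b\}$ and $\{y,a,b\}$ both extending to $q_1$-APs. So fixing the two shared vertices of a multi-vertex overlap costs only $O(1)$ rather than $O(n)$, compensating exactly for the extra vertex saved in $M$. Cycles in the overlap graph and edges attached to several predecessors are handled analogously, since each such additional incidence imposes an AP-compatibility constraint and hence only tightens the bound. Finally, Lemma \ref{lemma:minimal-covers} guarantees $M > 2q_1(q_2-1)/q_2$, ensuring that overlaps cannot accumulate so wildly as to break the calculation. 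Summing the resulting $o(1)$ bounds over the finitely many isomorphism types of non-simple covers yields $\mathbb{P}(H' \subset H(n,q_1,q_2,p)) = o(1)$.
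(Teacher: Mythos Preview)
Your overall strategy coincides with the paper's: a first-moment argument over the finitely many isomorphism types, building the configuration edge by edge, and invoking Lemmas~\ref{lemma:single-edge-multicover} and~\ref{lemma:minimal-covers} to control heavily overlapping covers. Your tree-case computation is correct, and your use of Lemma~\ref{lemma:single-edge-multicover} for a single pair of covering edges sharing two vertices is the right idea.

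There is, however, a genuine gap in how you dispose of the remaining overlap patterns. The sentence ``each such additional incidence imposes an AP-compatibility constraint and hence only tightens the bound'' is not correct as stated. Once an attached edge already passes through two known vertices, a further incidence does \emph{not} reduce its $O(1)$ count any further, but it \emph{does} reduce $M$ by one, so the expected number goes up by a factor of $p^{-1}$. Concretely, for $q_1=4,\,q_2=3$, take covering edges $E_1,E_2,E_3$ with $|E_1\cap E_3|=2$ and $|E_1\cap E_2|=|E_2\cap E_3|=1$ outside $E$. Your ordering (``$E_1$ free, $E_2$ attached, $E_3$ attached'') gives count $O(n^3)$ with $M=8$, hence expectation $O(n^3p^8)=O(1)$. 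To obtain $o(1)$ one must instead argue, via Lemma~\ref{lemma:single-edge-multicover}, that $E_1$ (and $E_3$) already have only $O(1)$ placements once $E$ is fixed; but your edge-ordering scheme places $E_1$ before $E_3$ and therefore cannot see this constraint. So the two tools you invoke do not combine automatically, and your final appeal to Lemma~\ref{lemma:minimal-covers} (``$M>2q_1(q_2-1)/q_2$'') is too coarse: that bound together with the trivial count $O(n^2)$ only yields $O(1)$.

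The paper closes this gap by classifying covering edges as \emph{simple}, \emph{generic} (meeting the others in exactly one vertex), or \emph{degenerate} (meeting them in at least two). It first disposes of the purely generic case by counting ``generic vertices''; then it shows separately that a degenerate edge touching two others that do not themselves overlap heavily is already $o(1)$; and only then, knowing $k_3\ge 2$ and that the degenerate block is rigid (via Lemma~\ref{lemma:single-edge-multicover}), it applies Lemma~\ref{lemma:minimal-covers} with $r=k_3$ to the degenerate edges alone and combines this with the contributions of the simple and generic edges in a single stratified estimate. That case split is the missing substance in your sketch.
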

\begin{proof}
    The following computations are used several times throughout the proof: \[p^{q_1-1} = O(n^{-\frac{q_2}{q_2-1} + \frac{q_2}{q_1(q_2-1)}}) = O(n^{-1 - \frac{1}{q_2-1} + \frac{q_2}{q_1(q_2-1)}}) = O(n^{-1 - \frac{q_1 - q_2}{q_1(q_2-1)}}),\]
    and
    \[ p^{q_1} = O(n^{-1 - \frac{1}{q_2-1}}) .\]

    Let $E$ be a short edge and let $E_1, \dotsc, E_{q_2}$ be its covering edges. We divide the covering edges into three categories: we say that a covering edge $E_i$ is simple if $\abs{E_i \cap (\bigcup_{j \ne i} E_j)} = 0$, we say a covering edge $E_i$ is generic if $\abs{E_i \cap (\bigcup_{j \ne  i} E_j)} = 1$, otherwise we say this edge is degenerate.
    Let $k_1 = \abs{\{i: E_i \text{ is simple}\}}$, let $k_2 = \abs{\{i: E_i \text{ is generic}\}}$ and let $k_3 = q_2 - k_1 - k_2$ be the number of degenerate edges.
    We are interested in counting configurations where $k_1 < q_2$.
    We will first show that non-simple covers with $k_3 = 0$ are unlikely to occur in $H(n,q_1,q_2,p)$. \par
    We say that a vertex is generic if it belongs to more than one generic edge and denote $m = \abs{\{v \in V(H'): v \text{ is generic} \}}$.
    Since every generic vertex belongs to at least two generic edges and every generic edge contains at most one generic vertex we obtain $1 \le m \le \frac{k_2}{2}$.
    There are $O(n^2)$ many possible choices for the short edge and $O(n^{k_1})$ many choices for the simple edges. Since $k_3 = 0$ every generic edge must contain a generic vertex, and thus, fixing all generic vertices determines the generic edges up to $O(1)$ many choices; thus, we have $O(n^m)$ many choices for the generic edges. \par
    We require $q_2$ vertices for the short edge, $q_1-1$ vertices for every simple edge and $k_2(q_1-1) - (k_2 - m)$ vertices for the generic edges. Hence, the expected number of such configurations is
    \begin{align*}
        O(n^{2 + k_1 + m} p^{q_2 + (k_1+k_2)(q_1-1) - (k_2 - m)})  &= O(n^{2 + k_1 + m}p^{q_2q_1 - (k_2 - m)})\\  
        = O(n^{2 + k_1 + m - (1 + \frac{1}{q_2-1})q_2 + \frac{(k_2 - m)q_2}{q_1(q_2-1)}}) &= O(n^{2 - (k_2 - m) - \frac{q_1q_2 - (k_2-m)q_2}{q_1(q_2-1)}}).
    \end{align*}
    If $k_2 - m \ge 2$, as $k_2 - m < q_1$ this is clearly $o(1)$.
    If $k_2 - m \le  1$ then since $k_2 \ge 2$ we must have $k_2 = 2$ and $m=1$ and therefore
    \begin{align*}
         O(n^{2 - (k_2 - m) - \frac{q_1q_2 - (k_2-m)q_2}{q_1(q_2-1)}}) = O(n^{1 - \frac{q_2(q_1-1)}{q_1(q_2-1)}}) = O(n^{1 - 1 - \frac{q_1 - q_2}{q_1(q_2-1)}}) = o(1).
    \end{align*}

    Therefore, we may assume that $k_3 > 0$.
    Assume first that there exists a degenerate edge $E_1$ along with two other covering edges $E_2, E_3$ such that $\abs{E_1 \cap E_i} = 1$ for $i \in \{2,3\}$ and $\abs{E_2 \cap E_3} < 2$. Then fixing $E_1$ determines all three edges up to $O(1)$ many choices. Thus, we have at most $O(n)$ many choices for those three edges.
    Since we require $q_2$ vertices for the short edge and $\abs{E_1 \cup E_2 \cup E_3} \ge 3q_1 - \abs{E_1 \cap E_2} - \abs{E_1 \cap E_3} - \abs{E_2 \cap E_3} > 3q_1 - 3$, the expected number of such configurations is
    \[
        O(n^3p^{3q_1 - 6 + q_2}) = O(n^3p^{3(q_1-1) + q_2- 3}) = O(n^{-3\cdot\frac{q_1 - q_2}{q_1(q_2-1)}}p^{q_2-3}) = o(1).
    \]

    We thus deduce that every degenerate edge must either intersect some other covering edge in at least two vertices or intersect two other degenerate edges that intersect each other in at least two vertices.
    Therefore, $k_3 \ge 2$, and by Lemma \ref{lemma:single-edge-multicover} we have at most $O(1)$ many choices for the degenerate edges. \par
    We now divide the generic edges into two categories: We say a generic edge is weakly-generic if it is a generic edge that lies in the same connected component of $H' \setminus \{E\}$ as a degenerate edge; otherwise, we say it is strongly-generic.
    We denote $k'_2 = \abs{\{i: E_i\text{ is strongly-generic}\}}$.
    We say that a vertex is strongly-generic if it belongs to more than one strongly-generic edge, and denote $m' = \abs{\{v \in V(H'): v \text{ is strongly-generic} \}}$.
    Since every strongly-generic vertex belongs to at least two strongly-generic edges and every strongly-generic edge contains at most one strongly-generic vertex we obtain $0 \le m' \le \frac{k'_2}{2} \le \frac{k_2}{2}$.
    Again, we note that fixing the strongly-generic vertices determines the strongly-generic edges up to $O(1)$ many choices.
    We also note that fixing $E$ determines the degenerate and weakly-generic edges up to $O(1)$ many choices.
    Since, by Lemma \ref{lemma:minimal-covers}, we require at least $2q_1(1-\frac{1}{k_3})$ vertices for the degenerate edges, at least $k_2(q_1 - 1)$ additional vertices for the generic edges and $k_1q_1$ vertices for the simple edges, we get that the expected number of such configurations is (since $k_1 + k_2 + k_3 = q_2$, $k_3 \ge 2$ and $m'\le \frac{k_2}{2}$)
    \begin{align*}
        &\ O(n^{2 + k_1 + m'}p^{k_1q_1 + k_2(q_1-1) + 2q_1(1-\frac{1}{k_3})}) \\
        &= O(n^{2 + k_1 + m' - k_1(1 + \frac{1}{q_2-1}) - k_2(1 + \frac{q_1 - q_2}{q_1(q_2-1)})}p^{2q_1(1-\frac{1}{k_3})})\\
        &= O(n^{2 - k_1\frac{1}{q_2-1} + m' - k_2 - k_2\frac{q_1-q_2}{q_1(q_2-1)}} p^{2q_1(1-\frac{1}{k_3})}),  \\
    % \end{align*}
    \intertext{since $m' - k_2 \le -\frac{k_2}{2}$ and $p^{2q_1(1-\frac{1}{k_3})} = O(n^{-2 -\frac{2}{q_2-1} + \frac{2q_2}{k_3(q_2-1)}})$,} 
    % \begin{align*}
        &= O(n^{2-\frac{k_1}{q_2-1} - \frac{k_2}{2} -  k_2\frac{q_1-q_2}{q_1(q_2-1)} - 2 - \frac{2}{q_2-1} + \frac{2q_2}{k_3(q_2-1)}}) \\
        &= O(n^{-k_2\frac{q_1-q_2}{q_1(q_2-1)} -\frac{(2+k_1 +\frac{k_2}{2}(q_2-1))k_3 - 2q_2}{k_3(q_2-1)}}) \\
        &= O(n^{-k_2\frac{q_1-q_2}{q_1(q_2-1)} -\frac{2k_3+k_1k_3 +k_2k_3 - 2q_2}{k_3(q_2-1)}}) = o(1).
    \end{align*}
    Finally, we conclude by Markov's inequality,
    \[
        \mathbb{P}[\text{\#short edges with non-simple covers} > 0] = \sum_{k_1 < q_2} \sum_{k_2 < q_2 - k_1} o(1) = o(1).
    \]
\end{proof}

\begin{lemma}\label{lemma:simple-extension}
    Let $H \subset H(n, q_1, q_2)$ be a simple path of length $\ell = O(\log{n})$.
    Define $Y = \abs{\{T \subset H(n, q_1, q_2):  H \cup T \text{ is a simple path of length } \ell + 1\}}$.
Then for every $c_r > 0$, there exists $c>0$ such that for $p \le c \cdot n^{-\frac{q_2}{q_1(q_2-1)}}$ we have \[\mathbb{E}[Y \mid V(H) \subset [n]_p] < c_r.\] 
\end{lemma}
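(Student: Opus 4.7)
The plan is a direct first-moment calculation, enumerating every valid extension $T$ and using the independence of $[n]_p$ on vertex sets disjoint from $V(H)$.

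First, I would unpack what an admissible extension must look like. Passing from a simple path of length $\ell$ to one of length $\ell+1$ amounts to adding exactly one new block: a short edge $E_{\ell+1}$ together with new long covering edges $E_{\ell+1,2},\dotsc,E_{\ell+1,q_2}$, while reusing the edge $E_{\ell+1,1}=E_{\ell,q_2}$ already in $H$. The ``no two edges of the same cardinality intersect'' condition, together with the cover relation $|E_{\ell+1}\cap E_{\ell+1,1}|=1$, forces $E_{\ell+1}\cap V(H)=\{a_1\}$ for some $a_1\in E_{\ell,q_2}\setminus E_\ell$, and forces each new long edge $E_{\ell+1,i}$ ($i\ge 2$) to meet $V(H)$ only at its unique attachment vertex $a_i\in E_{\ell+1}$.

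Second, I would count the new vertices contributed by $T$. The short edge $E_{\ell+1}$ introduces $q_2-1$ new vertices (everything except $a_1$), and each of the $q_2-1$ new long covering edges introduces an additional $q_1-1$ new vertices. In total, $T$ introduces
\[
    (q_2-1) + (q_2-1)(q_1-1) = q_1(q_2-1)
\]
vertices outside $V(H)$. Then I would bound the number of candidate extensions $T$ in $H(n,q_1,q_2)$: there are at most $q_1-1$ choices for $a_1$, at most $O(n)$ choices for a $q_2$-AP $E_{\ell+1}$ through $a_1$, and at most $O(n)$ choices for each of the $q_2-1$ long $q_1$-APs $E_{\ell+1,i}$ pinned at the appropriate $a_i$. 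This yields at most $O(n^{q_2})$ candidates, with constants depending only on $q_1,q_2$.

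Combining the two counts, and using that the $q_1(q_2-1)$ new vertices appear in $[n]_p$ independently with probability $p$ each, gives
\[
    \mathbb{E}[Y \mid V(H)\subset[n]_p] \le O\bigl(n^{q_2}\cdot p^{q_1(q_2-1)}\bigr) = O\bigl(c^{q_1(q_2-1)}\bigr)
\]
for $p\le c\cdot n^{-q_2/(q_1(q_2-1))}$, which is smaller than $c_r$ once $c$ is chosen sufficiently small. The only delicate step is the bookkeeping at the start: one must verify from the simple-path axioms that every ``new'' vertex is genuinely outside $V(H)$ (otherwise the probability factor $p^{q_1(q_2-1)}$ would be wrong), but this is immediate from the same-cardinality non-intersection condition, so the rest is routine.
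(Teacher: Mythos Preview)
Your proposal is correct and is essentially identical to the paper's proof: both count $O(n^{q_2})$ candidate extensions (one factor of $n$ for the short edge through a fixed vertex of $E_{\ell,q_2}$, and one for each of the $q_2-1$ new long covering edges), observe that $T$ contributes $(q_2-1)+(q_2-1)(q_1-1)=q_1(q_2-1)$ new vertices, and conclude $\mathbb{E}[Y\mid V(H)\subset[n]_p]=O(c^{q_1(q_2-1)})$. One small wording slip: for $i\ge 2$ the attachment vertex $a_i$ lies in $E_{\ell+1}\setminus V(H)$, so the new long edges in fact meet $V(H)$ in no vertex at all, not ``only at $a_i$''; your vertex count is nonetheless correct.
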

\begin{proof}
    Without loss of generality we may assume that the short edge in $T$ interesects $H$ only at $E_{\ell,q_2}$. We denote $T = \{E=E_{\ell+1}, E_{\ell+1,2}, \dotsc, E_{\ell+1,q_2}\}$.
Since $E$ intersects $H$ in a single vertex we have $O(n)$ many choices for it.
By fixing $E$ we limit the number of choices for each covering edge to $O(n)$, giving us a total of $O(n^{q_2})$ many choices for $T$ in $H(n,q_1,q_2)$. \\
Since $T$ requires $q_2 - 1 + (q_2-1)(q_1-1) = (q_2 - 1)q_1$ new vertices, we obtain
\[
    \mathbb{E}[Y \mid V(H) \subset [n]_p] = O(n^{q_2}p^{(q_2-1)q_1}) = O(n^{q_2 - q_2}c^{(q_2-1)q_1}) = O(c^{(q_2-1)q_1}).
\]
\end{proof}

\begin{corollary}\label{cor:simple-paths}
    Let $\ell = O(\log{n})$, and let $Y$ be the random variable counting simple paths of length $\ell$ in $H(n,q_1,q_2,p)$. Then,
    \[
        \mathbb{E}Y = O(c_r^\ell n^{1-\frac{1}{q_2-1}}).
    \]
\end{corollary}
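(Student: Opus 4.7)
The plan is to proceed by a straightforward induction on $\ell$, with Lemma \ref{lemma:simple-extension} providing the key multiplicative step from one length to the next.

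For the base case $\ell = 0$, a simple path of length zero is a single long edge, by the convention in the definition. The number of $q_1$-APs in $[n]$ is $\Theta(n^2)$, each is contained in $[n]_p$ with probability $p^{q_1}$, so substituting $p \le c \cdot n^{-q_2/(q_1(q_2-1))}$ yields
\[
\mathbb{E}Y_0 = O(n^2 p^{q_1}) = O\!\left(n^{2 - q_2/(q_2-1)}\right) = O\!\left(n^{1 - 1/(q_2-1)}\right),
\]
matching the claimed bound at $\ell = 0$.

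For the inductive step, observe that every simple path of length $\ell+1$ decomposes in exactly two ways (one for each of its two ends) as a simple path $P$ of length $\ell$ together with an extension $T$. Hence, by linearity of expectation,
\[
2\, \mathbb{E}Y_{\ell+1} = \sum_{P} \mathbb{P}\!\left(V(P) \subset [n]_p\right) \cdot \mathbb{E}\!\left[\,\#\{T : P \cup T \text{ is a length-}(\ell{+}1) \text{ path}\}\,\middle|\, V(P) \subset [n]_p\right],
\]
where the outer sum ranges over simple paths of length $\ell$ in $H(n,q_1,q_2)$. By Lemma \ref{lemma:simple-extension} applied with the chosen target constant $c_r$, the inner conditional expectation is at most $c_r$, provided $c$ is chosen small enough. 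Iterating the resulting recursion gives
\[
\mathbb{E}Y_\ell \le (c_r/2)^\ell\, \mathbb{E}Y_0 = O\!\left(c_r^\ell\, n^{1 - 1/(q_2-1)}\right),
\]
as desired.

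There is no serious obstacle here; the argument is mechanical, relying purely on linearity of expectation and the previously established extension bound. The only bookkeeping subtlety is the factor of two coming from the two symmetric ends of a path in the decomposition, which is harmless (and can in fact be absorbed by a slightly smaller choice of $c_r$). The hypothesis $\ell = O(\log n)$ is used precisely to ensure that Lemma \ref{lemma:simple-extension} remains applicable at every intermediate step of the induction.
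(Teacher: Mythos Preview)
Your proof is correct and follows essentially the same inductive approach as the paper: start from the expected count of single long edges and multiply by $c_r$ at each extension step via Lemma~\ref{lemma:simple-extension}. One minor quibble: the phrase ``decomposes in exactly two ways'' is not quite accurate at $\ell=0$, where a block decomposes into (long edge)${}+{}$(extension) in $q_2$ ways rather than two; but since having more decompositions only strengthens the inequality you need, this is harmless.
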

\begin{proof}
    We may construct a simple path of length $\ell$ by first choosing a single long edge, and then choosing $\ell$ extensions.\par
    Since two integers belong to only a constant number of $q_1$-term arithmetic progressions, we obtain that the number of choices for the initial long edge is $O(n^2)$.
    Since we require $q_1$ vertices for the long edge, we obtain that the expected number of long edges is $O(n^2p^{q_1}) = O(n^{1-\frac{1}{q_2-1}})$. \par
    By the previous lemma, we may extend the path step by step, each time adding a factor of $c_r$ to the expectation, and the corollary immediately follows.
\end{proof}

We are now ready to prove the probabilistic lemma, thus completing the proof of Theorem~\ref{thm:asymmetric-0-statement}.

\begin{proof}[Proof of \nameref{lemma:asymmetric-probabilistic-lemma}]
Let $H=H(n,q_1,q_2,p)$, let $B$ be a sufficiently large constant and set $\ell' = B\log{n}$.
The proof of this lemma can be summarized as follows: \par
First, we recall that, by Lemma \ref{lemma:only-simple-covers}, a.a.s\  no non-simply covered short edges exist, and show that all simple paths terminate at lengths smaller than $\ell'$, using Corollary \ref{cor:simple-paths}.
We then apply a first-moment argument to several random variables, showing that every small $2$-blocking hypergraph contains some sub-hypergraph with $o(1)$ many expected copies in $H$; thus, the probability that the $2$-blocking hypergraph appears in our random hypergraph is $o(1)$. \par
Specifically, we will show that the existence of a $2$-blocking hypergraph implies either the existence of additional vertices such that the expected number of choices for them is $o(n^{-1 + \frac{1}{q_2-1}})$,
or the existence of a simple path that obeys an additional constraint that causes us to lose a degree of freedom in the path construction.
Since, by Corollary \ref{cor:simple-paths}, we have only an expected $O(n^{1-\frac{1}{q_2-1}})$ many choices for a simple path, we obtain that replacing a factor of $n$ with a factor of $O(\log^k{n})$ causes the expectation to tend to zero as $n$ grows. \par 
 As seen in the proof for Corollary \ref{cor:simple-paths}, we may construct a path by selecting a long edge and then iteratively extending the path; therefore, in some cases, we refer to a single long edge as a simple path of length zero, allowing us to treat a single block as an extension to an existing path. \par
We will also sometimes assume that the edges of a simple path are ordered $E_{1,1}, E_1, E_{1,2}, \dotsc, E_{1,q_2}, E_2, \dotsc, E_{\ell, q_2}$.
In such an ordering, if there are no other constraints on the path, we have $O(n^2)$ many choices for the first edge and $O(n)$ many choices for every other edge.
For convenience, we say a vertex $v \in V(P)$ precedes another vertex $u \in V(P)$ if $v$ belongs to an edge that precedes all edges that contain $u$. \par
Let $U$ be the random variable counting short edges with non-simple covers. By Lemma~\ref{lemma:only-simple-covers} we have
\[ \mathbb{P}(U > 0) = o(1). \]\par
Let $W$ be the random variable counting simple paths of length $\ell'$.
By Corollary \ref{cor:simple-paths} and Markov's inequality we obtain 
\[ \mathbb{P}(W > 0) \le \mathbb{E}W = O(n^{1-\frac{1}{q_2-1}}c_r^{B\log{n}}) = o(1). \]\par
Let $X$ be the random variable counting spoiled simple paths of length $\ell < \ell'$.
Let $P$ be a simple path and let $E$ be a spoiling edge for it.
By Corollary \ref{cor:simple-paths}, there are at most $O(n^{1-\frac{1}{q_2-1}})$ many choices for the path up to the final block.
For the final block we have $O(n)$ many choices for the short edge, and for all the long edges except $E_{\ell,q_2}$.
Since $\abs{E \cap (V(P)\setminus E_{\ell,q_2})} \ge 2$ we have at most $O(\log^2{n})$ many choices for $E$, and therefore $O(\log^2{n})$ many choices for $E_{\ell,q_2}$.
Since we require $q_2 - 1$ vertices for the short edge, along with $(q_2-1)(q_1 - 1)$ vertices for the long edges we conclude that
\[ \mathbb{E}X = \sum_{\ell < \ell'}O(n^{1-\frac{1}{q_2-1}}n^{q_2-1}p^{q_1(q_2-1)} \log^2{n}) = O(n^{q_2 - q_2 - \frac{1}{q_2-1}}\log^2{n}) = o(1), \]
and therefore by Markov's inequality,
\[ \mathbb{P}(X > 0) = o(1).\]\par

Let $Y$ be the random variable counting simple paths of length $\ell < \ell'$ with a saw. 
We denote the path by $P$.
Since the saw edges are not entirely contained in $P$, each one must contain a vertex $v \notin V(P)$.
Let $S_2, \dotsc, S_{q_1}$ be the saw edges. 
For each $k \in \{2,\dotsc,q_1\}$, let $\{s_k\} = (V(P) \cap S_k) \setminus E_{1,1}$ (there is only one such vertex, since $\abs{S_k \cap V(P)} = 2$), and let $S'_k$ be the first edge in $P$ that contains $s_k$. 
We split into several cases:
\medskip\\\medskip\noindent\textbf{Case 1.} There exist two edges $S_i, S_j$ such that $\abs{S_i \cap S_j} \ge 2$.\\ 
By Lemma \ref{lemma:single-edge-multicover}, we obtain that fixing $E_{1,1}$ determines $S_i$ and $S_j$ up to $O(1)$ many choices.
Thus, if we iteratively extend a path from $E_{1,1}$ we obtain that when we select $S'_i$ it must intersect one (or both) of $S_i,S_j$, giving us only $O(1)$ many choices for it, and thus the expected number of such paths with $S_i$ and $S_j$ in $H$ is $O(n^{-\frac{1}{q_2-1}}\log{n}) = o(1)$.
Therefore, the probability that such saw edges exist is $o(1)$ and we may assume all saw edges $S_i$ and $S_j$ intersect each other in at most one vertex.
\medskip\\\medskip\noindent\textbf{Case 2a.} There exist $i,j$ such that $(S_i \cap S_j) \setminus V(P) \ne \emptyset$ and $S'_i \ne S'_j$. \\
Without loss of generality we assume $S'_i$ follows $S'_j$ in the edge ordering of $P$.
If we fix the subpath up to (but not including) $S'_i$, then we have at most $O(\log{n})$ many choices for $S_j$ since it intersects both $E_{1,1}$ and $S'_j$ which have already been chosen;
this implies that there are further $O(1)$ many choices for $S_i$, as it must intersect both $S_j$ and $E_{1,1}$.
Therefore, there are further $O(1)$ many choices for $S'_i$ as it must intersect both $S_i$ and either a short edge or the final long edge of the previous block.
We now have $O(n)$ many choices for each remaining edge in the block that contains $S'_i$. \par
We obtain that for the final block, we have $O(n)$ many choices for each edge except for $S'_i$ for which we have only $O(\log{n})$ many choices.
Hence the expected number of such configurations is 
\[ 
\sum_{\ell<\ell'}O(n^{1-\frac{1}{q_2-1}}n^{q_2-1}p^{q_1(q_2-1)} \log{n}) = O(n^{q_2 - q_2 - \frac{1}{q_2-1}}\log^2{n}) = o(1).
 \]
\medskip\noindent\textbf{Case 2b.} There exist $i,j$ such that $(S_i \cap S_j) \setminus V(P) \ne \emptyset$ and $S'_i = S'_j = S'\ne E_1$. \\
By fixing $E_{1,1}$ and $\{v\} = S_i \cap S_j$ we determine $S_i$ and $S_j$ up to $O(1)$ many choices.
Since $v \notin V(P)$, we also determine $S'$ up to $O(1)$ many choices as it must intersect $S_i \cup S_j$ in two vertices other than $v$.
If we now extend a path from $E_{1,1}$ towards $S'$, we obtain that we have only $O(1)$ many choices for the edge connecting the path to $S'$ (whether it is a short edge, or the final long edge in a block). 
Note that this connecting edge cannot be $E_{1,1}$ by our assumption that $S' \ne E_1$.
Thus, the number of choices for both $S'$ and the previous edge is $O(n)$ - the number of choices for $v$. \par
If we now add the remaining edges in the final block, we obtain that the expected number of such paths is \[O(n^{-\frac{1}{q_2-1}}\log{n}) = o(1).\] 
\medskip\noindent\textbf{Case 2c.} There exist $i,j$ such that $(S_i \cap S_j) \setminus V(P) \ne \emptyset$ and $S'_i = S'_j = S' = E_1$. \\
From the previous cases we may assume that all saw edges intersect each other in at most one vertex, and any saw edge that intersects another saw edge outside of $V(P)$ must intersect $V(P)\setminus E_{1,1}$ in one vertex that lies on $E_1$.  \par

Let $A = \bigcup_{i=2}^{q_1}(S_i \setminus V(P))$ and for each $v \in A$ define $S(v) =\{i: v \in S_i\}$.
Suppose that  $\abs{S(v)} \ge 2$ for some $v \in A$.
Then, \[\abs{\bigcup_{i \in S(v)}(S_i \cap (E_1 \setminus E_{1,1}))} = \abs{S(v)},\] since no two saw edges share more than one vertex and any saw edge that intersects another has a vertex in $E_1\setminus E_{1,1}$.
Therefore, $\abs{S(v)} \le \abs{E_1 \setminus E_{1,1}} = q_2-1$ for all $v \in A$.
Since,
\[(q_2-2)(q_1-1) = \sum_{i=2}^{q_1}\abs{S_i \setminus V(P)} = \sum_{v \in A}\abs{S(v)} \le \abs{A}(q_2-1),\] we deduce that we require $\abs{A} \ge  \frac{(q_2 - 2)(q_1 - 1)}{q_2 - 1} = q_1 - 1 - \frac{q_1-1}{q_2-1}$ additional vertices for the saw edges. \par
If we assume $P$ is fixed, we have at most $O(\log{n})$ many choices for each saw edge. Thus, the expected number of choices for the additional vertices is 
\begin{align*}
    O(\log^{q_1-1}{n}\cdot p^{q_1 -1 - \frac{q_1-1}{q_2-1}}) &=
    O(\log^{q_1-1}{n}\cdot n^{-1 - \frac{1}{q_2-1} + \frac{q_2}{q_1(q_2-1)} + \frac{q_1q_2}{q_1(q_2-1)^2} - \frac{q_2}{q_1(q_2-1)^2}}) \\
    &= O(\log^{q_1-1}{n}\cdot n^{-1 - \frac{q_1-q_2}{q_1(q_2-1)} + \frac{1}{q_2-1} +  \frac{q_1 - q_2}{q_1(q_2-1)^2}}) \\
    &= O(\log^{q_1-1}{n}\cdot n^{-1 + \frac{1}{q_2-1} - \frac{(q_1-q_2)(q_2-2)}{q_1(q_2-1)^2}}).
\end{align*} \par
Thus by Corollary \ref{cor:simple-paths}, the expected number of choices for the path and the saw edges is \[O(\log^{q_1}{n} \cdot n^{-\frac{(q_1-q_2)(q_2-2)}{q_2(q_2-1)^2}}) = o(1).\]
\medskip\noindent\textbf{Case 3.} $(S_i \cap S_j) \setminus V(P) = \emptyset$ for all $i \ne j \in \{2,\dotsc,q_1\}$. \\
Since each saw edge contains at least one vertex not in $V(P)$, there are at least $q_1-1$ additional vertices introduced by the saw edges.
We have $O(\log{n})$ many choices for each saw edge as they must intersect $V(P)$ in two vertices each, one of which lies on $E_{1,1}$.
Thus, the expected number of paths with saws such as above is
\begin{align*} 
    \sum_{\ell<\ell'}O(n^{1-\frac{1}{q_2-1}}(\log{n}\cdot p)^{q_1-1}) &=
O(\log^{q_1}{n}\cdot n^{1 - \frac{1}{q_2-1} - \frac{q_2}{q_2-1} + \frac{q_2}{q_1(q_2-1)}}) \\
                                                                   &= O(\log^{q_1}{n} \cdot n^{-\frac{2q_1-q_2}{q_1(q_2-1)}}) = o(1).
\end{align*}
Thus, by Markov's inequality,
\[
    \mathbb{P}(Y > 0) = o(1).
\]\par
Finally, let $Z$ be the random variable counting simple paths of length $\ell <\ell'$ with a spoiled extension.
Once more, we divide the argument into several cases.
First, let $Z_1$ be the random variable counting simple paths with spoiled extensions such that no long edge in the extension intersects the path $P$ in more than one vertex. \par
We denote the number of long edges in the extension that intersect $V(P)$ by $k$.
We have an expected $O(n^{1-\frac{1}{q_2-1}})$ many choices for $P$, $O(n)$ many choices for the short edge and each of the $q_2-1-k$ long edges that are disjoint from $V(P)$, and $O(\log{n})$ many choices for each of the $k$ long edges that intersect $P$.
We also require $k(q_1-1) + (q_2-1-k)q_1$ new vertices. 
Thus, the expected number of such configurations is
\begin{align*}
    \mathbb{E}Z_1 &= \sum_{\ell<\ell'}\sum_{k=1}^{q_2}O(n^{1-\frac{1}{q_2-1} + q_2 - k}p^{(q_2-1-k)q_1 + k(q_1-1)}\log^k{n}) \\ 
                  &= \sum_{k=1}^{q_2}O(n^{1 - \frac{1}{q_2-1} + q_2 - k}p^{(q_2-1)q_1 - k} \log^{q_2+1}{n}) \\
                  &= \sum_{k=1}^{q_2}O(n^{1 - \frac{1}{q_2-1} - k(1 - \frac{q_2}{q_1(q_2-1)})} \log^{q_2+1}{n}) \\
                  &= O(n^{- \frac{q_1-q_2}{q_1(q_2-1)}} \log^{q_2+1}{n}) = o(1).
\end{align*} \par
Next, let $Z_2$ be the random variable counting paths with spoiled extensions that contain exactly one long edge $L$ that intersects the path $P$ in at least two vertices.
Fixing $P$, we have only $O(\log^2{n})$ many choices for $L$ and $O(1)$ further choices for $E_{\ell+1}$, as it must intersect both $L$ and $E_{\ell,q_2}$.
Once again, we sum over $k$, the number of long edges in the extension that intersect $V(P)$ in exactly one vertex.
We have $O(n^{q_2-2-k})$ choices for all edges of the extension,
and we require at least $(q_2-2)q_1 - k + 1$ new vertices.
Thus, since $k \le q_2-2$ and 
\[
    p^{q_1(q_2-2)} = O(n^{-\frac{q_2(q_2-2)}{q_2-1}}) = O(n^{-q_2 +1 + \frac{1}{q_2-1}}),
\]
the expected number of such configurations is 
\begin{align*}
    \mathbb{E}Z_2 &= \sum_{\ell<\ell'}\sum_{k=0}^{q_2-2}O(n^{q_2-k-1-\frac{1}{q_2-1}}p^{(q_2-2)q_1 - k + 1}\log^2{n})\\
                  &= \sum_{k=0}^{q_2-2}O(n^{-k}p^{1-k}\log^3{n}) 
                  = O(\log^3{n} \cdot p) = o(1).
\end{align*} \par
Finally, let $Z_3$ be the random variable counting paths with spoiled extensions such that there exist distinct $i,j \in [q_2]$ such that $\abs{V(P) \cap E_{\ell+1,z}} \ge 2$ for $z \in \{i,j\}$.
We denote $L_z = E_{\ell+1,z}$. \par
Since $E_{\ell+1}$ must have a simple cover we obtain that $L_z \cap E_{\ell,q_2} = \emptyset$ for $z \in \{i,j\}$. 
Therefore, if we fix the path $P$ up to (but not including) $E_{\ell,q_2}$, we have at most $O(\log^2{n})$ many choices for each of $L_i$ and $L_j$, since both edges intersect $V(P)$ in at least two vertices.
Therefore, we have at most $O(\log^4{n})$ many choices for $E_{\ell+1}$ as it must intersect both $L_i$ and $L_j$. \par
Finally, we obtain that we have $O(\log^4{n})$ many choices for $E_{\ell,q_2}$ as it must intersect both $E_{\ell+1}$ and $E_\ell$.
Since we have an expected $O(n^{1-\frac{1}{q_2-2}})$ many choices for the path up to the final block, and we have $O(n^{q_2-1}\log^4{n})$ many choices for the final block (but still require $q_1(q_2-1)$ vertices), the expected number of choices for $P$ is
\[
    \sum_{\ell<\ell'}O(n^{-\frac{1}{q_2-1}}\log^4{n}) = o(1),
\]
hence
\[
    \mathbb{P}(Z_3 > 0) = o(1),
\]
and therefore
\[
    \mathbb{P}(Z > 0) \le \mathbb{P}(Z_1 + Z_2 + Z_3 > 0) = o(1).
\]
Finally, \[\mathbb{P}(U=W=X=Y=Z=0) \to 1 \text{ as } n \to \infty\] completing the proof.
\end{proof}

\section*{Acknowledgements}
I would like to thank my advisor Wojciech Samotij for his introduction to this problem and for his guidance towards its solution. I would also like to express my deepest appreciation for his kindness, patience and meticulousness during the research and writing process.

\bibliographystyle{amsplain}
\bibliography{ref}

\end{document}